\newtheorem{theorem}{Theorem}[section]
\newtheorem{lemma}[theorem]{Lemma}
\newtheorem{corollary}[theorem]{Corollary}
\theoremstyle{definition}
\newtheorem{example}[theorem]{Example}
\theoremstyle{remark}
\newtheorem{remark}[theorem]{Remark}
\newcommand{\mc}[1]{\mathcal{#1}}
\newcommand{\res}{\!\!\upharpoonright}
\newcommand{\bfSigma}{\boldsymbol{\Sigma}}
\newcommand{\bfPi}{\mathbf{\Pi}}
\newcommand{\ol}{\overline}
\newcommand{\forces}{\Vdash}
\DeclareMathOperator{\tp}{tp}
\DeclareMathOperator{\Mod}{Mod}
\newcommand\mathcircled[1]{%
	\mathpalette\@mathcircled{#1}%
}
\newcommand\@mathcircled[2]{%
	\tikz[baseline=(math.base)] \node[draw,circle,inner sep=1pt] (math) {$\m@th#1#2$};%
}
\begin{document}
	
	\title{Infinitary Logic Has No Expressive Efficiency Over Finitary Logic}
	\author{Matthew Harrison-Trainor \and Miles Kretschmer\thanks{Miles Kretschmer was supported by an REU at the University of Michigan funded by NSF grant 2003712.}}
	
	\maketitle
	
	\begin{abstract}
		We can measure the complexity of a logical formula by counting the number of alternations between existential and universal quantifiers. Suppose that an elementary first-order formula $\varphi$ (in $\mc{L}_{\omega,\omega}$) is equivalent to a formula of the infinitary language $\mc{L}_{\infty,\omega}$ with $n$ alternations of quantifiers. We prove that $\varphi$ is equivalent to a finitary formula with $n$ alternations of quantifiers. Thus using infinitary logic does not allow us to express a finitary formula in a simpler way.
	\end{abstract}
	
	\section{Introduction}\label{section:introduction}
	
	This paper is about the relationship between finitary elementary first-order logic ($\mc{L}_{\omega,\omega}$) and the infinitary logics (such as $\mc{L}_{\infty,\omega}$) which extend it by allowing conjunctions and disjunctions of infinite sets of formulas. These infinitary logics are more expressive than the finitary logic, but lose compactness. For example, there are classes such as connected graphs and torsion groups which cannot be axiomatized in the finitary logic (as shown by a simple compactness argument) but which can be axiomatized in the infinitary logic.
	
	A natural way of measuring the complexity of a formula is by putting it in normal form and counting the number of alternations of quantifiers. An $\exists_n$ formula is a formula which begins with a block of existential quantifiers, and has $n$ alternating blocks of existential and universal quantifiers. Similarly, a $\forall_n$ formula has $n$ alternating blocks of existential and universal quantifiers, beginning with a block of universal quantifiers. In the infinitary languages, we count alternations of quantifiers, but we do \textit{not} count infinitary conjunctions and disjunctions. (This differs from the standard way of counting quantifiers in computable structure theory, where formulas are classified as $\Sigma_n$ or $\Pi_n$; a $\Sigma_n$ formula is $\exists_n$ but not necessarily vice versa.\footnote{Our theorems remain true if one replaces $\exists_n$ by $\Sigma_n$ and $\forall_n$ by $\Pi_n$, so we get stronger theorems by not counting infinitary conjunctions and disjunctions.})
	
	Now suppose that we have a finitary formula $\varphi(\ol{x})$ which is equivalent to an infinitary existential ($\exists_1$) formula, say
	\[ \psi(\ol{x}) = \bigdoublewedge_{i} \bigdoublevee_{j} \exists \ol{y}_{i,j} \; \theta_{i,j}(\ol{x},\ol{y}_{i,j}).\]
	Now one can check that $\psi(\ol{x})$ (and hence $\varphi(\ol{x})$) has the property of being preserved in extensions: if $\mc{A} \subseteq \mc{B}$, and $\mc{A} \models \psi(\ol{a})$, then $\mc{B} \models \psi(\ol{a})$. Since $\varphi(\ol{x})$ is preserved by extensions, a standard preservation result says that $\varphi(\ol{x})$ is itself equivalent to a finitary existential ($\exists_1$) formula. In general, one proves that if a finitary formula is equivalent to an infinitary existential formula, it is equivalent to a finitary existential formula.
	
	If $\varphi(\ol{x})$ is instead a finitary formula which is equivalent to an infinitary $\forall_2$ formula $\psi(\ol{x})$, a similar argument works to show that $\varphi(\ol{x})$ is equivalent to a finitary $\forall_2$ formula. Instead of preservation in extensions, we use the fact that $\varphi(\ol{x})$ is equivalent to a finitary $\forall_2$ formula if and only $\varphi$ is preserved under unions of chains of models: Whenever $\mc{A}_0\subseteq\mc{A}_1\subseteq\mc{A}_2\subseteq\dots \subseteq \mc{A}_{\omega} = \bigcup_{n<\omega}\mc{A}_n$ is a chain of models, with $\mc{A}_n\models\varphi(\ol{a})$ for all $n$, then $\mc{A}_{\omega}\models\varphi(\ol{a})$. In this case, $\varphi(\ol{x})$ is preserved under unions of chains of models because it is equivalent to the infinitary $\forall_2$ formula $\psi(\ol{x})$, which one can check is preserved under unions of chains of models.
	
	This sort of argument breaks down at the $\forall_4$/$\exists_4$ case, as there is no longer a semantic test for finitary $\forall_4$/$\exists_4$ formulas which is satisfied by infinitary $\forall_4$/$\exists_4$ formulas. Nevertheless, the main result of this paper is that this result is true in general.
	
	\begin{restatable}{theorem}{interpolationtheorem}\label{theorem:interpolation}
		Let $T$ be a finitary ($\mc{L}_{\omega,\omega}$) theory. Let $\psi$ be an infinitary ($\mc{L}_{\infty,\omega}$) $\exists_n$ (resp. $\forall_n$) formula which is equivalent to a finitary formula $\varphi$ in all models of $T$. Then, $\psi$ and $\varphi$ are equivalent to a finitary $\exists_n$ (resp. $\forall_n$) formula in all models of $T$.
	\end{restatable}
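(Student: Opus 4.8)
The plan is to prove the $\exists_n$ case and obtain the $\forall_n$ case by duality: negation sends an infinitary $\exists_n$ formula to an infinitary $\forall_n$ formula and a finitary formula to a finitary one, so applying the $\exists_n$ result to $\neg\psi,\neg\varphi$ gives the $\forall_n$ statement. For $\mc{A}\models T$ and a tuple $\ol{a}$, let $E_n(\mc{A},\ol{a})$ denote the set of finitary $\exists_n$ formulas $\theta(\ol{x})$ with $\mc{A}\models\theta(\ol{a})$. I aim to produce one finitary $\exists_n$ formula equivalent to $\varphi$ over $T$, and the candidate is a finite conjunction drawn from $\Gamma:=\{\theta : \theta \text{ is finitary } \exists_n,\ T\models\forall\ol{x}(\varphi\to\theta)\}$. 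By compactness it suffices to prove $T\cup\Gamma(\ol{c})\models\varphi(\ol{c})$ for fresh constants $\ol{c}$; a finite subconjunction then witnesses the equivalence, and it is again $\exists_n$ because finite conjunctions and disjunctions of $\exists_n$ formulas are (after disjoint renaming of quantified variables) equivalent to $\exists_n$ formulas. This is the step where the finitariness of both $T$ and $\varphi$ is essential, since compactness is unavailable for $\psi$ itself.

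So I fix $\mc{B}\models T\cup\Gamma(\ol{b})$ and must show $\mc{B}\models\varphi(\ol{b})$. First a standard consistency argument supplies a companion model: the finitary theory $T\cup\{\varphi(\ol{a})\}\cup\{\neg\theta(\ol{a}) : \theta \text{ finitary } \exists_n,\ \mc{B}\models\neg\theta(\ol{b})\}$ is satisfiable, since otherwise compactness would yield finitary $\exists_n$ formulas $\theta_1,\dots,\theta_k$ with $T\models\varphi\to\bigvee_i\theta_i$, placing $\bigvee_i\theta_i$ in $\Gamma$ and forcing $\mc{B}\models\bigvee_i\theta_i(\ol{b})$, contrary to the choice of the $\theta_i$. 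Thus there are $\mc{A}\models T$ and $\ol{a}$ with $\mc{A}\models\varphi(\ol{a})$ and $E_n(\mc{A},\ol{a})\subseteq E_n(\mc{B},\ol{b})$. I then pass $\mc{A}$ and $\mc{B}$ to $\omega$-saturated elementary extensions, which preserve $T$, preserve $\varphi$, and fix the finitary $\exists_n$ theories of $\ol{a},\ol{b}$, so the inclusion and $\mc{A}\models\varphi(\ol{a})$ persist and proving the conclusion for the extensions yields it for the original $\mc{B}$.

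The crux is to upgrade this inclusion of finitary $\exists_n$ theories into the genuine $n$-round asymmetric back-and-forth relation that characterizes infinitary $\exists_n$. Concretely, I claim that for $\omega$-saturated $\mc{A},\mc{B}$, if $E_n(\mc{A},\ol{a})\subseteq E_n(\mc{B},\ol{b})$ then $(\mc{A},\ol{a})$ and $(\mc{B},\ol{b})$ are related by the game in which, to continue from level $n+1$, for every $\ol{e}$ in $\mc{A}$ one finds $\ol{d}$ in $\mc{B}$ with $E_n(\mc{B},\ol{b}\ol{d})\subseteq E_n(\mc{A},\ol{a}\ol{e})$, the roles of $\mc{A},\mc{B}$ then swapping at the next level. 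This is proved by induction on $n$, the base case being equality of quantifier-free types. For the inductive step, given $\ol{e}$ the set $\{\sigma(\ol{b},\ol{z}) : \sigma \text{ finitary } \forall_n,\ \mc{A}\models\sigma(\ol{a},\ol{e})\}$ is finitely satisfiable in $\mc{B}$: a finite conjunction $\sigma$ of such formulas is $\forall_n$, so $\exists\ol{z}\,\sigma(\ol{x},\ol{z})$ is finitary $\exists_{n+1}$ and lies in $E_{n+1}(\mc{A},\ol{a})\subseteq E_{n+1}(\mc{B},\ol{b})$, producing a witness in $\mc{B}$. By $\omega$-saturation the type is realized by some $\ol{d}$, and since $\neg\theta$ is $\forall_n$ whenever $\theta$ is $\exists_n$, this forces $E_n(\mc{B},\ol{b}\ol{d})\subseteq E_n(\mc{A},\ol{a}\ol{e})$, so the induction hypothesis applies with reversed roles. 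This extraction of a finitely satisfiable $\exists_{n+1}$-type from the inclusion, followed by realization via saturation, is the heart of the argument and the step I expect to be the main obstacle; everything around it is routine compactness packaging.

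Finally I transfer $\psi$ across the relation. A routine induction on the structure of an infinitary $\exists_n$ formula shows that such formulas are preserved upward along the back-and-forth relation above: the quantifier step matches the game exactly, and the relation is insensitive to the cardinality of $\bigwedge$ and $\bigvee$, so the infinitary connectives cause no difficulty. Since $\mc{A}\models\varphi(\ol{a})$ and $\mc{A}\models T$ give $\mc{A}\models\psi(\ol{a})$, preservation yields $\mc{B}\models\psi(\ol{b})$, and $\mc{B}\models T$ then gives $\mc{B}\models\varphi(\ol{b})$. This establishes $T\cup\Gamma\models\varphi$, completing the reduction and hence the theorem; because $\psi$ is equivalent to $\varphi$ over $T$, the resulting finitary $\exists_n$ formula is equivalent to both.
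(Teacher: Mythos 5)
Your proposal is correct, but it takes a genuinely different route from the paper's. The paper runs everything through its forcing machinery: since $\varphi$ is finitary, $\mc{A}\models\varphi(\ol{a})$ gives $\mc{A}\forces^*\varphi(\ol{a})$ (Lemma \ref{lemma:finitary-forcing-is-truth}), hence $\mc{A}\forces^*\psi(\ol{a})$ (Lemma \ref{lemma:monotonicity}); the definability of forcing --- the elementary formula $\mathrm{Force}_\psi$, a disjunction of conjunctions of finitary $\exists_n$ formulas --- shows that weak forcing of an infinitary $\exists_n$ formula is preserved under $(n-1)$-elementary extensions (Lemma \ref{lemma:n-extensions-forcing}), so $\mc{B}\forces^*\psi(\ol{a})$ and thus $\mc{B}\models\varphi(\ol{a})$; the paper then cites as a black box the classical fact that a finitary formula preserved under $\preceq_{n-1}$-extensions among models of $T$ is $T$-equivalent to a finitary $\exists_n$ formula. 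You avoid forcing entirely and also re-derive that classical fact rather than citing it: your $\Gamma$-plus-compactness packaging and the two-model consistency argument are the standard proof of the finitary preservation theorem, and your key lemma --- that between $\omega$-saturated structures, inclusion of finitary $\exists_n$-theories yields an asymmetric back-and-forth relation along which infinitary $\exists_n$ formulas transfer --- plays exactly the role of the paper's Lemma \ref{lemma:n-extensions-forcing}, with saturated models standing in for the generic elementary extensions of the forcing approach (it is the graded version of the classical fact that $\omega$-saturated elementarily equivalent structures are $\mc{L}_{\infty,\omega}$-equivalent). Your route is in fact close in spirit to the ``hands-on'' alternative proof the paper sketches after its main proof, except that sketch still routes through $\forces^*$ and the amalgamation Lemmas \ref{lemma:amalgamation-1} and \ref{lemma:amalgamation-2}. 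What each approach buys: yours is self-contained and uses only compactness and saturation; the paper's forcing apparatus costs more setup but is reused to prove Theorems \ref{theorem:preservation} and \ref{theorem:preservationctble}, which your method does not obviously yield.

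One small imprecision to repair: infinitary $\exists_n$ formulas are not in alternation normal form, so the quantifier step of your final induction does not always ``match the game exactly.'' Inside an $\exists_n$ formula an existential quantifier may govern another $\exists_n$ matrix (for instance $\exists y\,\bigdoublewedge_i\exists z\,\chi_i(x,y,z)$ is $\exists_1$), so besides the level-dropping, role-swapping step you describe, you need a same-level step: given $\ol{e}\in\mc{A}$, find $\ol{d}\in\mc{B}$ with $E_n(\mc{A},\ol{a}\ol{e})\subseteq E_n(\mc{B},\ol{b}\ol{d})$, no swap and no drop. This follows by your identical saturation move --- realize over $\ol{b}$ the finitary $\exists_n$-type of $(\ol{a},\ol{e})$, whose finite satisfiability uses that $\exists\ol{z}\,(\theta_1\wedge\cdots\wedge\theta_k)$ is again finitary $\exists_n$ for $n\geq 1$ --- so it is a one-line patch rather than a gap.
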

	
	\noindent In spite of their greater expressive power, infinitary languages cannot define relations already definable in finitary first order logic with any greater efficiency (as measured by quantifier complexity). One can also view this as computing an intersection: the properties which are expressible by both a finitary formula and an infinitary $\exists_n$ formula are exactly the properties expressible by a finitary $\exists_n$ formula. (One might compare the spirit of this result to Louveau's theorem \cite{Louveau80}: If $C\subset \omega^\omega$ is $\Delta^1_1$ (hyperarithmetic) and $\bfSigma^0_\alpha$, for some $\alpha<\omega_1^{\mathrm{CK}}$, then $C$ is $\Sigma^0_\alpha(x)$ for some hyparithmetic $x$.)
	
	The general technique we use is a notion of forcing where the conditions are elementary extensions of a given structure. This can also be viewed as a form of Robinson's model-theoretic forcing \cite{Robinson71}. One can perhaps view the forcing as fixing a deficiency of infinitary formulas; while the truth of an infinitary formula $\varphi$ may not be preserved by elementary extensions, forcing will be: if $\mc{A} \forces^* \varphi$ and $\mc{B} \succeq \mc{A}$, then we will have that $\mc{B} \forces^* \varphi$. If $\varphi$ is equivalent to a finitary formula, then the truth of $\varphi$ is already preserved under elementary extensions, and we will be able to show that $\mc{A} \models \varphi$ if and only if $\mc{A} \forces^* \varphi$.
	
	The definability of the forcing relation will allow us to characterize those infinitary formulas which do transfer across elementary extensions. We will show that these are the infinitary formulas built up using conjunctions and disjunctions of finitary formulas.
	
	\begin{restatable}{theorem}{preservationtheorem}\label{theorem:preservation}
		Let $\psi(\ol{x})$ be an infinitary $(\mc{L}_{\infty,\omega})$ formula and let $T$ be a finitary ($\mc{L}_{\omega,\omega}$) theory. The following are equivalent:
		\begin{enumerate}
			\item  Given $\mc{A} \preceq \mc{B}$, models of $T$, $\mc{A} \models \varphi(\ol{a})$  if and only if $\mc{B} \models \varphi(\ol{a})$.
			\item $\psi(\ol{x})$ is equivalent in all models of $T$ to an $\mc{L}_{\infty,\omega}$ formula of the form
			\[ \bigdoublevee_\alpha \bigdoublewedge_\beta \theta_{\alpha,\beta}(\ol{x}) \]
			where each $\theta_{\alpha,\beta}$ is a finitary formula.
		\end{enumerate}
		Moreover, if these conditions hold and $\psi$ is a $\forall_n$ (resp. $\exists_n$) formula, then we may take each $\theta_{\alpha,\beta}$ to be $\forall_n$ (resp. $\exists_n$).
	\end{restatable}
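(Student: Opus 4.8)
The strategy is to prove the two implications separately and to obtain the ``moreover'' clause by refining the construction used for $(1)\Rightarrow(2)$. The implication $(2)\Rightarrow(1)$ is routine: each $\theta_{\alpha,\beta}$ is finitary, so for $\mc{A}\preceq\mc{B}$ we have $\mc{A}\models\theta_{\alpha,\beta}(\ol{a})$ iff $\mc{B}\models\theta_{\alpha,\beta}(\ol{a})$, and this equivalence is inherited by the disjunction of conjunctions $\bigdoublevee_\alpha\bigdoublewedge_\beta\theta_{\alpha,\beta}$. Hence $\psi$, being equivalent to it in models of $T$, is preserved in both directions across elementary extensions.

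For $(1)\Rightarrow(2)$ in its plain form I would first show that whether $\mc{A}\models\psi(\ol{a})$ depends only on the complete finitary type $\tp^{\mc{A}}(\ol{a})$ relative to $T$. Given $\mc{A},\mc{B}\models T$ with tuples realizing the same complete type, elementary amalgamation produces a model $\mc{C}$ with elementary embeddings $\mc{A}\to\mc{C}\leftarrow\mc{B}$ identifying the two tuples; since $\psi$ is isomorphism-invariant and, by $(1)$, transfers across each elementary embedding, $\mc{A}\models\psi(\ol{a})$ forces $\mc{B}\models\psi(\ol{b})$. Letting $P$ be the set of complete types realized by some $\psi$-satisfying tuple (a set, as there are only set-many finitary formulas), each $p\in P$ is a set of finitary formulas and $\psi$ is equivalent in models of $T$ to $\bigdoublevee_{p\in P}\bigdoublewedge p(\ol{x})$, since a complete $p$ is realized by $\ol{a}$ exactly when $\tp^{\mc{A}}(\ol{a})=p$.

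The ``moreover'' clause is the heart of the matter, and I would run the same construction with $\forall_n$ (resp. $\exists_n$) formulas in place of complete types, which reduces everything to one separation statement: if $\mc{A}\models\psi(\ol{a})$ and $\mc{B}\not\models\psi(\ol{b})$, then some finitary $\forall_n$ (resp. $\exists_n$) formula holds of $\ol{a}$ but fails of $\ol{b}$. Granting this, taking $\Phi_{\mc{A},\ol{a}}$ to be the finitary $\forall_n$ formulas true of $\ol{a}$ and forming $\bigdoublevee\bigdoublewedge\Phi_{\mc{A},\ol{a}}$ over a set of representatives yields a formula with $\forall_n$ conjuncts equivalent to $\psi$ (and dually for $\exists_n$). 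To prove separation in the $\forall_n$ case I argue by contradiction: failure means every finitary $\exists_n$ formula true of $\ol{b}$ is true of $\ol{a}$, i.e.\ $\exists_n\text{-}\tp(\ol{b})\subseteq\exists_n\text{-}\tp(\ol{a})$. Two facts then collide. First, a level-$n$ amalgamation lemma: such containment yields an elementary extension $\mc{D}\succeq\mc{A}$ together with a $\Sigma_{n-1}$-elementary embedding $h\colon\mc{B}\to\mc{D}$ (one preserving finitary $\exists_{n-1}$ and $\forall_{n-1}$ formulas in both directions) with $h(\ol{b})=\ol{a}$. Second, infinitary $\forall_n$ formulas are preserved downward under $\Sigma_{n-1}$-elementary substructures, and dually infinitary $\exists_n$ formulas upward under $\Sigma_{n-1}$-elementary embeddings; this is a direct induction on the formula, the point being that after stripping the outer universal block the matrix is $\exists_{n-1}$ and so descends along a $\Sigma_{n-1}$-elementary map, with the infinitary $\bigdoublewedge,\bigdoublevee$ causing no difficulty. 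Combining, $\mc{A}\models\psi(\ol{a})$ and $\mc{A}\preceq\mc{D}$ give $\mc{D}\models\psi(\ol{a})$ by $(1)$, and downward preservation along $h$ then gives $\mc{B}\models\psi(\ol{b})$, a contradiction. The $\exists_n$ case is the exact dual, using $\exists_n\text{-}\tp(\ol{a})\subseteq\exists_n\text{-}\tp(\ol{b})$, an amalgam $\mc{D}\succeq\mc{B}$ with $h\colon\mc{A}\to\mc{D}$, and upward preservation.

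I expect the main obstacle to be the level-$n$ amalgamation lemma together with the precise index bookkeeping around it. Establishing it amounts to showing, by compactness, that the elementary diagram of $\mc{A}$ together with the $\Sigma_{n-1}/\Pi_{n-1}$ diagram of $\mc{B}$ (with $\ol{b}$ named by $\ol{a}$) is finitely satisfiable; finite satisfiability holds because any finite demand from $\mc{B}$ can be packaged, after existentially quantifying the auxiliary parameters, into a single $\Sigma_n$ assertion about $\ol{b}$, which passes to $\ol{a}$ exactly by the type-containment hypothesis. Making ``$\exists_n$-type containment'' match ``$\Sigma_{n-1}$-elementary amalgam'' and dovetail with the preservation lemma is where care is needed; iterating elementary and existential amalgamation $n$ times to build a sandwich is the classical route. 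I note finally that this is the model-theoretic counterpart of the approach sketched in the introduction, where the definability of the elementary-extension forcing relation would instead supply the same finitary $\forall_n$/$\exists_n$ invariants directly.
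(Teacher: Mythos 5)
Your $(2)\Rightarrow(1)$ direction and your plain $(1)\Rightarrow(2)$ via complete types and elementary amalgamation are both correct, and the latter matches the easy construction the paper itself gives (its Lemma \ref{Invariance} argument, there phrased for the forcing relation). Your reduction of the ``moreover'' clause to the separation statement, and your level-$n$ amalgamation lemma, are also sound: the compactness argument packaging finitely many $\exists_{n-1}/\forall_{n-1}$ demands into one $\exists_n$ assertion about $\ol{b}$ is standard and the index bookkeeping works out. The fatal gap is the preservation lemma you invoke to finish: it is simply false that infinitary $\forall_n$ formulas descend along $(n-1)$-elementary (or even fully elementary) embeddings once $n \geq 2$. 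The paper's own Theorem \ref{theorem:alternation} is a counterexample: the $\forall_2$ sentence $\psi = \forall x \left(Q(x)\rightarrow \exists y\left(R(x,y)\wedge\bigdoublewedge_n \neg P_n(y)\right)\right)$ has $\mc{B}\models\neg\psi$ with an elementary extension $\mc{C}\succ\mc{B}$ satisfying $\psi$, so downward preservation fails along a map that is $(n-1)$-elementary for every $n$. Your claim that ``the infinitary $\bigdoublewedge,\bigdoublevee$ cause no difficulty'' is exactly where the induction breaks: after stripping the universal block, the $\exists_{n-1}$ matrix contains existential quantifiers over infinitary conjunctions, and a witness realizing infinitely many conditions in $\mc{D}$ only guarantees \emph{finite} satisfiability of those conditions in $\mc{B}$, not realization. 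Since your separation statement is itself an easy consequence of the theorem being proved, deriving it from this false lemma leaves the argument essentially circular at its core.

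This obstruction is precisely what the paper's forcing machinery is built to evade: truth of infinitary formulas does not transfer along elementary maps, but the weak forcing relation $\forces^*$ does (Lemmas \ref{lemma:transfer} and \ref{lemma:n-extensions-forcing}), and the definability lemma shows $\mathrm{Force}_\psi$ is an elementary formula whose conjuncts match the quantifier complexity of $\psi$ --- with the crucial $\exists$-clause converting $\exists\ol{y}\bigdoublevee_\alpha\bigdoublewedge_\beta\theta_{\alpha,\beta}$ into the finite-approximation form $\bigdoublevee_\alpha \bigdoublewedge_{S \text{ finite}} \exists\ol{y}\bigwedge_{\beta\in S}\theta_{\alpha,\beta}$, which is exactly the finite-satisfiability phenomenon your induction ignores. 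The paper then proves $(1)\Rightarrow(2)$ by showing $\psi$ is equivalent over $T$ to $\mathrm{Force}_\psi$, using generic extensions and hypothesis (1) to pass between truth and forcing. Your architecture could be repaired by running your separation argument with $\forces^*$ in place of $\models$ (this is in effect the ``hands-on'' proof the paper sketches after Theorem \ref{theorem:interpolation}, using its Lemmas \ref{lemma:amalgamation-1} and \ref{lemma:amalgamation-2}), but some substitute for the forcing relation and its definability is an unavoidable missing ingredient, not an optional alternative as your closing remark suggests.
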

	
	We call a formula as in (2) an \textit{elementary formula}. One particular consequence is that if an infinitary formula $\varphi$ is preserved under elementary extensions, and is $\exists_{n+1}$, then whenever $\mc{A} \models \varphi$ and $\mc{A} \preceq_{n} \mc{B}$, $\mc{B} \models \varphi$. 
	
	Note that even if $\psi(\ol{x})$ is in $\mc{L}_{\omega_1,\omega}$, the formula $\bigdoublevee_\alpha \bigdoublewedge_\beta \theta_{\alpha,\beta}(\ol{x})$ might not be. However, we show that one can find a formula in $\mc{L}_{\omega_1,\omega}$ witnessing that $\psi(\ol{x})$ is preserved upwards and downwards by elementary extensions. In the following theorem, we say that an infinitary formula is \textit{quantifier-free over finitary formulas} if it can be built by repeatedly taking (infinitary) conjunctions, disjunctions, and negations of finitary formulas.
	
	\begin{restatable}{theorem}{preservationtheoremctble}\label{theorem:preservationctble}
		Let $\psi(\ol{x})$ be an infinitary $\mc{L}_{\omega_1,\omega}$ formula, and $T$ be a countable finitary theory. The following are equivalent:
		\begin{enumerate}
			\item  Given $\mc{A} \preceq \mc{B}$, models of $T$, and $\ol{a} \in \mc{A}$, $\mc{A} \models \psi(\ol{a})$  if and only if $\mc{B} \models \psi(\ol{a})$.
			\item $\psi(\ol{x})$ is equivalent in all models of $T$ to an $\mc{L}_{\omega_1,\omega}$ formula which is quantifier-free over finitary formulas.
		\end{enumerate}
		Moreover, if these conditions hold and $\psi$ is a $\forall_n$ formula (or an $\exists_n$ formula), then $\psi(\ol{x})$ is equivalent in all models of $T$ to an $\mc{L}_{\omega_1,\omega}$ formula which is quantifier-free over finitary $\exists_n$/$\forall_n$ formulas.
	\end{restatable}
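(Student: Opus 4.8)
The plan is to reduce the statement to a descriptive-set-theoretic fact about the Stone space of finitary types, exploiting the hypothesis $\psi \in \mc{L}_{\omega_1,\omega}$ only through the Borel-ness of satisfaction. The implication $(2) \Rightarrow (1)$ is immediate: if $\psi$ is equivalent over $T$ to a formula built from finitary formulas by countable conjunctions, disjunctions and negations, then for $\mc{A} \preceq \mc{B}$ the two models assign the same truth value to every finitary formula with parameters from $\mc{A}$, and an easy induction on the Boolean construction propagates this agreement to $\psi(\ol a)$. For $(1) \Rightarrow (2)$ I would first record that $(1)$ forces the truth of $\psi(\ol a)$ to depend only on the complete finitary type $\tp^{\mc{A}}(\ol a)$: given $(\mc A, \ol a)$ and $(\mc B, \ol b)$ realizing the same complete type $p$ (both models of $T$), elementary amalgamation over the shared type produces $\mc C$ with elementary embeddings $\mc A, \mc B \to \mc C$ identifying $\ol a$ with $\ol b$, and two applications of $(1)$, together with isomorphism-invariance of infinitary truth, give $\mc A \models \psi(\ol a) \Leftrightarrow \mc B \models \psi(\ol b)$. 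Hence there is a set $S$ in the Stone space $S_n(T)$ of complete finitary $n$-types consistent with $T$ such that, in every model of $T$, $\psi(\ol a)$ holds iff $\tp(\ol a) \in S$.

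The crux is to show that $S$ is \emph{Borel} in $S_n(T)$; this is exactly where $\psi \in \mc{L}_{\omega_1,\omega}$ and countability of the language enter, and it is the step I expect to be the main obstacle. The naive approach — inducting on the subformulas of $\psi$ and tracking, for each subformula, the set of types satisfying it — breaks down, since the subformulas of $\psi$ need not be preserved under elementary extensions, so their truth is not type-determined, and the existential step amounts to projecting along $S_{n+1}(T) \to S_n(T)$, keeping one only inside the analytic sets. I would sidestep the induction entirely. Work in the Polish space $X$ of models of $T$ with universe $\omega$ together with a distinguished $n$-tuple; by the infinitary Lopez--Escobar analysis, $\{(\mc A, \ol a) \in X : \mc A \models \psi(\ol a)\}$ is Borel, and the type map $t \colon X \to S_n(T)$ is continuous. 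Since every $p \in S_n(T)$ is realized in some countable model of $T$ (Lowenheim--Skolem), and truth of $\psi$ is $t$-invariant by the previous paragraph,
\[ p \in S \iff \exists (\mc A, \ol a) \in X \; \big( t(\mc A, \ol a) = p \;\wedge\; \mc A \models \psi(\ol a) \big), \]
so $S$ is analytic; applying the same to $\neg\psi$ shows $S_n(T) \setminus S$ is analytic, and by Suslin's theorem $S$ is Borel.

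It remains to convert a Borel code for $S$ into a formula. The space $S_n(T)$ is compact, metrizable and zero-dimensional, and its clopen sets are exactly the sets $[\theta] = \{p : \theta \in p\}$ for finitary $\theta$; hence its Borel $\sigma$-algebra is generated from these clopen sets by countable unions, intersections and complements. Translating a Borel code for $S$ accordingly — sending each generator $[\theta]$ to $\theta$, countable unions to $\bigdoublevee$, countable intersections to $\bigdoublewedge$, and complements to $\neg$ — yields an $\mc{L}_{\omega_1,\omega}$ formula $\chi(\ol x)$ that is quantifier-free over finitary formulas and satisfies $\mc A \models \chi(\ol a) \Leftrightarrow \tp(\ol a) \in S$ in all models of $T$. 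Thus $\chi$ is equivalent to $\psi$ over $T$, which is $(2)$.

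For the ``moreover'' clause I would rerun the argument in a coarser Stone space. When $\psi$ is $\forall_n$ (resp. $\exists_n$), Theorem~\ref{theorem:preservation} already supplies an equivalent $\bigdoublevee_\alpha \bigdoublewedge_\beta \theta_{\alpha,\beta}$ with every $\theta_{\alpha,\beta}$ a finitary $\forall_n$ (resp. $\exists_n$) formula; in particular the truth of $\psi(\ol a)$ depends only on which finitary $\forall_n$ formulas hold of $\ol a$, i.e. it factors through the Stone space $Y_n$ of the Boolean algebra generated by the finitary $\forall_n$ formulas. The analytic/co-analytic argument run in $Y_n$ shows the corresponding $S' \subseteq Y_n$ is Borel, and now the clopen generators of $Y_n$ are Boolean combinations of finitary $\forall_n$ formulas, so decoding produces an $\mc{L}_{\omega_1,\omega}$ formula quantifier-free over finitary $\forall_n$ (resp. $\exists_n$) formulas equivalent to $\psi$ over $T$. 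Notably, this route never invokes the forcing relation $\forces^*$ of the earlier sections, trading that machinery for Suslin's theorem at the cost of restricting to $\mc{L}_{\omega_1,\omega}$ and countable $T$ — which is exactly the setting of this theorem.
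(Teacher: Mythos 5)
Your proposal is correct and is essentially the paper's own argument: the paper likewise factors the truth of $\psi$ through the finitary ($\exists_n$-)theory of the model (via $\mathrm{Force}_\psi$ from Theorem \ref{theorem:preservation}), shows the relevant set of theories in a Cantor space $2^D$ is both $\bfSigma^1_1$ and $\bfPi^1_1$ using the L\"owenheim--Skolem theorem for $\mc{L}_{\omega_1,\omega}$ (hence Borel, your Suslin step), and then decodes a Borel code into a formula quantifier-free over finitary $\exists_n$/$\forall_n$ formulas, with complement-to-negation justified exactly as you justify it. Your packaging via the Stone space $S_n(T)$ and an explicit Lopez--Escobar/continuous-type-map argument, and your use of elementary amalgamation (rather than $\mathrm{Force}_\psi$) for type-invariance in the unranked case, are only cosmetic variants of the paper's route.
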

	
	Note that if $\psi$ is $\forall_n$, the formula to which it is equivalent could involve both $\exists_n$ and $\forall_n$ formulas. We leave open whether this generalizes to $\mc{L}_{\kappa,\omega}$ for $\kappa > \omega_1$.
	
	\section{Preliminaries}
	
	\subsection{\texorpdfstring{The Infinitary Languages $\mc{L}_{\kappa,\omega}$ and $\mc{L}_{\infty,\omega}$}{The Infinitary Languages}}
	
	Fix a signature $\tau$ and an infinite cardinal $\kappa$. The language $\mc{L}_{\kappa,\omega}(\tau)$, which we will habitually denote $\mc{L}_{\kappa,\omega}$, will be the language which allows infinite conjunctions and disjunctions of size $< \kappa$. We define the language $\mc{L}_{\kappa,\omega}$ to be the smallest set of formulas with the following properties.
	\begin{enumerate}
		\item If $\psi$ is an atomic formula of $\mc{L}_{\omega,\omega}$, $\psi\in \mc{L}_{\kappa,\omega}$.
		\item If $\phi\in\mc{L}_{\kappa,\omega}$, then $\neg\phi\in\mc{L}_{\kappa,\omega}$.
		\item If $\phi \in \mc{L}_{\kappa,\omega}$, then $\forall x \phi \in\mc{L}_{\kappa,\omega}$ and  $\exists x\phi \in\mc{L}_{\kappa,\omega}$.
		\item If $\Phi\subset\mc{L}_{\kappa,\omega}$ is such that $|\Phi|<\kappa$ and only finitely many variables occur freely among elements of $\Phi$, then $\bigdoublevee\limits_{\phi\in\Phi}\phi\in\mc{L}_{\kappa,\omega}$ and $\bigdoublewedge\limits_{\phi\in\Phi}\phi\in\mc{L}_{\kappa,\omega}$.
	\end{enumerate}
	Note that $\mc{L}_{\omega,\omega}$ is just the standard finitary elementary first-order logic. We say that a formula is in $\mc{L}_{\infty,\omega}$ if it is a formula of $\mc{L}_{\kappa,\omega}$ for some $\kappa$. We often refer to the formulas of $\mc{L}_{\omega,\omega}$ as finitary formulas and to those of $\mc{L}_{\infty,\omega}$ as infinitary formulas.
	
	A fragment $\mathbb{A}$ of $\mc{L}_{\kappa,\omega}$ is a set of formulas of $\mc{L}_{\kappa,\omega}$ with the following properties.
	\begin{enumerate}
		\item If $\psi\in\mathbb{A}$, $\neg\psi\in\mathbb{A}$.
		\item If $\psi\in\mathbb{A}$, every subformula of $\psi$ is in $\mathbb{A}$.
	\end{enumerate}
	Starting with any formula $\psi\in\mc{L}_{\kappa^+,\omega}$ and closing under negations and subformulas, we obtain a fragment $\mathbb{A}$ containing $\psi$, of cardinality at most $\kappa$.

	\subsection{\texorpdfstring{$\forall_n$ and $\exists_n$ Formulas}{Counting Quantifiers}}
	
	In order to count quantifier alternations in infinitary formulas, we will define a hierarchy of classes of formulas, ranked by natural numbers. For each $n \in \mathbb{N}$, we define the classes $\forall_n$, $\exists_n$ of formulas of $\mc{L}_{\infty,\omega}$ as follows.
	\begin{enumerate}
		\item If $\psi$ is atomic, then for all $n$, $\psi\in \forall_n$, and $\psi\in\exists_n$.
		\item If $\psi = \neg \phi$, then $\psi\in\exists_n$, (respectively, $\psi\in\forall_n$) if $\phi\in \forall_n$ (respectively, $\phi\in\exists_n$).
		\item If $\psi  = \bigdoublevee\limits_{\phi\in\Phi}\phi$ or $\psi = \bigdoublewedge\limits_{\phi\in\Phi}\phi$, then $\psi\in\exists_n$ (respectively, $\psi\in\forall_n$) if for every $\phi\in\Phi$, $\phi\in\exists_n$ (respectively, $\phi\in\forall_n$).
		\item If $\psi = \exists\ol{y}\phi(\ol{y})$, $\psi \in \exists_n$ if $\phi\in\exists_n$ and $n \geq 1$, and $\psi\in \exists_{n+1}$ if $\phi\in\forall_n$.
		\item If $\psi = \forall\ol{y}\phi(\ol{y})$, $\psi \in \forall_n$ if $\phi\in\forall_n$ and $n \geq 1$, and $\psi\in \forall_{n+1}$ if $\phi\in\exists_n$.
	\end{enumerate}
	For example, $\exists_0 = \forall_0$ consists of quantifier free formulas. We call the $\exists_1$ formulas \textit{existential formulas} and the $\forall_1$ formulas \textit{universal formulas}.
	
	For finitary formulas, we count quantifier alternations in the same way, though of course in (3) the conjunctions and disjunctions are finitary. We say that $\mc{A}$ is an $n$-elementary substructure of $\mc{B}$, and write $\mc{A} \preceq_n \mc{B}$, if $\mc{A} \subseteq \mc{B}$ and for any finitary $\exists_n$ or $\forall_n$ formula $\phi(\ol{x})$ and $\ol{a} \in \mc{A}$, $\mc{A} \models \phi(\ol{a})$ if and only if $\mc{B} \models \phi(\ol{a})$. Thus,  $\mc{A}\preceq_0\mc{B}$ just means that $\mc{A}\subseteq \mc{B}$, and $\mc{A}\preceq \mc{B}$ if and only if $\mc{A}\preceq_n\mc{B}$ for all $n<\omega$. We write $\mc{A} \equiv \mc{B}$ (or $(\mc{A},\ol{a}) \equiv (\mc{B},\ol{b})$) for elementary equivalence.
	
	For infinitary formulas, we note that this way of counting quantifiers differs from the standard way of counting quantifiers for $\mc{L}_{\omega_1,\omega}$ formulas in computable structure theory. That is, an $\exists_n$ formula is not necessarily $\Sigma_n$ and a $\forall_n$ formula is not necessarily $\Pi_n$ (though every $\Sigma_n$ formula is $\exists_n$ and every $\Pi_n$ formula is $\forall_n$). The difference is that for $\Sigma_n$ and $\Pi_n$ formulas, we count infinite disjunctions the same as existential quantifiers, and infinite conjunctions the same as universal quantifiers, whereas for $\exists_n$ and $\forall_n$ formulas we do not count infinite conjunctions and disjunctions at all. So, for example, a formula of the form $\bigdoublewedge_i \exists x \theta_i(x)$ is $\exists_1$ but not $\Sigma_1$.
	
	There are natural reasons to consider both forms of counting. The classes $\Sigma_n$ and $\Pi_n$ have descriptive-set-theoretic meaning via Vaught's version of the Lopez-Escobar theorem \cite{Vaught75,LopezEscobar}: An invariant set $\mathbb{K}$ of structures is $\bfSigma^0_n$ if and only if it is defined by a $\Sigma_n$ formula. Moreover, many connections between definability and computability-theoretic properties work with $\Sigma_n$ and $\Pi_n$ formulas. 
	
	On the other hand, Malitz \cite{Malitz69} showed that a formula of $\mc{L}_{\omega_1,\omega}$ is preserved by substructures if and only if it is universal ($\forall_1$). This is the sort of behaviour we will consider in this paper, and so the classes $\forall_n$ and $\exists_n$ are the appropriate ones to use. We note that because every $\Pi_n$ formula is also $\forall_n$ (and there is no difference for finitary formulas), our results about $\forall_n$ formulas apply to $\Pi_n$ formulas as well. Stating our results for $\forall_n$ formulas is simply their strongest form.
	
	\section{Forcing with Elementary Extensions}\label{sec:forcing}
	
	In attempting to iteratively construct models of infinitary sentences using elementary chains, one is faced with the obstruction that infinitary formulas are not preserved by elementary extensions. In fact, infinitary sentences can be very unstable with respect to elementary extensions.
	\begin{theorem}\label{theorem:alternation}
		There is a sentence $\psi\in\mc{L}_{\omega_1,\omega}$ and a structure $\mc{A}$ such that for any $\mc{B}\succeq \mc{A}$ with $\mc{B}\models\psi$, there is a $\mc{C}\succ \mc{B}$ with $\mc{C}\models\neg\psi$, and for any $\mc{B}\succeq\mc{A}$ with $\mc{B}\models\neg\psi$, there is a $\mc{C}\succ\mc{B}$ with $\mc{C}\models \psi$.
	\end{theorem}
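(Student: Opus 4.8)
The plan is to produce a single $\mc{L}_{\omega_1,\omega}$ sentence whose truth value is \emph{nowhere} decided above $\mc{A}$: from every $\mc{B} \succeq \mc{A}$ one can pass to an elementary extension forcing the sentence true and to another forcing it false, which is strictly stronger than the two alternation clauses demanded. I would work in the relational signature $\{\sim, <\}$, where $\sim$ is an equivalence relation and $<$ is a total preorder that is constant on $\sim$-classes and linearly (indeed densely, without endpoints) orders them. Let $\mc{A}$ be a countable model that is \emph{rich}: in every $<$-interval and for every $n$ there are infinitely many $\sim$-classes of size exactly $n$, and \emph{every} class of $\mc{A}$ is finite. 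Set $T = \operatorname{Th}(\mc{A})$; a routine Ehrenfeucht--Fra\"iss\'e argument shows $T$ is complete and that membership in an infinite class is a non-principal type (omitted in $\mc{A}$), hence not finitely definable. With $\operatorname{Inf}(x) := \bigwedge_{n} \exists y_1 \cdots y_n \big(\bigwedge_i y_i \sim x \wedge \bigwedge_{i<j} y_i \ne y_j\big)$ asserting that $x$ lies in an infinite class, I would take
\[ \psi \;:=\; \exists x \big(\operatorname{Inf}(x) \wedge \forall y (\operatorname{Inf}(y) \to (y < x \vee y \sim x))\big), \]
that is, ``there is a $<$-greatest infinite class.'' This is a genuine $\mc{L}_{\omega_1,\omega}$ sentence, and because $\operatorname{Inf}$ is not finitary, $\psi$ is exactly the kind of sentence whose truth can fail to transfer across elementary extensions.

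The two extension moves are the heart of the construction, and crucially both are available from \emph{every} $\mc{B} \succeq \mc{A}$, so there is no ``sink'' configuration. To force $\psi$, I would let $\mc{C} \succ \mc{B}$ consist of $\mc{B}$ together with a fresh, rich block of classes placed $<$-above all classes of $\mc{B}$, the block consisting of finite classes except for a single infinite class $N$ above which the block has only finite classes; then $N$ is the unique $<$-greatest infinite class of $\mc{C}$. To force $\neg\psi$, I would instead adjoin above all of $\mc{B}$ a fresh block containing a strictly $<$-increasing $\omega$-sequence of infinite classes $N_0 < N_1 < N_2 < \cdots$ with no greatest one; then the infinite classes of $\mc{C}$ (the old ones, all below, together with the cofinal tower) have no maximum, so $\psi$ fails. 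Since each move adjoins new material entirely \emph{above} everything already present, it is irrelevant whether the infinite classes of $\mc{B}$ were bounded, cofinal, or already possessed a greatest element: both outcomes remain reachable. This immediately yields the two required alternation clauses.

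The one substantive point, and the step I expect to be the main obstacle, is verifying that these appends are \emph{elementary}, i.e.\ $\mc{B} \preceq \mc{C}$. I would prove this by the Tarski--Vaught criterion: given $\varphi(x, \bar b)$ with parameters $\bar b \in \mc{B}$ and a witness in the newly adjoined block, one must produce a witness inside $\mc{B}$. The key lemma is that, over any finite parameter set, an infinite class placed high in the order is Ehrenfeucht--Fra\"iss\'e-equivalent, up to any fixed quantifier rank, to a sufficiently large finite class sitting in a matching $<$-position; and such large finite classes exist in $\mc{B}$ in every high interval precisely because $\mc{A}$, hence $\mc{B}$, is rich. Replacing the external witness by such an internal one preserves satisfaction of $\varphi$ up to its quantifier rank, giving the required witness; the cofinal tower is handled by matching it against a finite increasing chain of large finite classes. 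Assembling these local replacements is routine but is where all the care lies, since one must run the back-and-forth over the parameters and the order simultaneously. Once $\mc{B} \preceq \mc{C}$ is established, $\mc{C} \models T$ is automatic and the construction is complete.
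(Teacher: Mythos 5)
Your proposal is correct, but it takes a genuinely different route from the paper. The paper works with disjoint unions of unordered ``blocks'' in a signature $\{Q,R,P_n\}_{n<\omega}$: $\psi$ says every $Q$-root has an attached element satisfying $\bigdoublewedge_n \neg P_n$, and the two moves are to adjoin one fresh standard block (killing $\psi$) or to adjoin a nonstandard element to every standard block (restoring $\psi$). Because the blocks carry no ordering and are freely permutable, and because for any finite sub-signature and quantifier rank a nonstandard element is indistinguishable from $b_m$ for large $m$, the elementarity of these extensions is essentially immediate (the paper in fact just asserts it). Your construction instead uses an ordered equivalence structure, the infinitary predicate $\operatorname{Inf}(x)$, and the sentence ``there is a $<$-greatest infinite class,'' toggled by appending either a single infinite class or a cofinal $\omega$-tower of infinite classes at the top. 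This works, and your diagnosis of where the work lies is accurate: the end-extension elementarity is the real content, and your key lemma is the right one --- an infinite class is $\equiv_k$-equivalent (over finitely many parameters) to a finite class of size at least some $m(k)$ in a matching order position, with such classes available in $\mc{B}$ in every interval because richness is a first-order schema true in $\mc{A}$ and hence inherited by any $\mc{B}\succeq\mc{A}$; running Tarski--Vaught with this positional back-and-forth (treating the tower move as finitely many large finite classes per play) closes the argument. The trade-off is clear: the paper's unordered blocks make the elementarity verification trivial, while your order-based version requires the more delicate back-and-forth but delivers the slightly stronger (and conceptually cleaner) conclusion that \emph{both} truth values of $\psi$ are reachable from \emph{every} $\mc{B}\succeq\mc{A}$, not merely from those of the opposite persuasion --- though one should note the paper's construction also has this property implicitly. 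The sketch of the back-and-forth is at roughly the paper's own level of rigor, so I see no genuine gap, only details deferred where you said they would be.
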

	\begin{proof}
		Let $\tau$ consist of a unary relation symbol $Q$, a binary relation symbol $R$, and a unary relation $P_n$ for each natural number $n$. The structures we will be interested in will be the disjoint unions of certain building blocks. Think of elements satisfying $Q$ as the roots of a block, and $R$ attaching a number of other elements to the root. A ``standard block" consists of a single element $a$ satisfying $Q$, and countably many elements $b_0,b_1,b_2,\dots$ not satisfying $Q$. We let $(a,b_0),(a,b_1),\dots\in R$. For each $n$, $b_n$ satisfies $P_n$, but not $P_m$ for $m\neq n$. A ``non-standard block" consists of the same elements but, in addition, one or more ``non-standard elements" $b_*$ that do not satisfy $P_n$ for any $n$.
		
		Let $\mc{A}$ be the disjoint union of countably copies of the standard block. Any elementary extension $\mc{B}$ of $\mc{A}$ is elementarily equivalent to $\mc{A}$, and so consists of the disjoint union of infinitely many blocks which are either standard, or contain non-standard elements. 
		
		Let
		\[\psi = \forall x \left(Q(x)\rightarrow \exists y\left(R(x,y)\wedge\bigdoublewedge_n \neg P_n(y)\right)\right)\]
		The sentence $\psi$ says that every element satisfying $Q$ has an associated non-standard element, and so belongs to a non-standard block.
		
		Let $\mc{B}$ be an elementary extension of $\mc{A}$. If $\mc{B}\models\psi$, we obtain $\mc{C}\succ\mc{B}$ by adding a single standard block, in which case $\mc{C}\models\neg\psi$. If $\mc{B}\models\neg\psi$, we obtain $\mc{C} \succ \mc{B}$ by adding a single non-standard element to each standard block, in which case $\mc{C}\models\psi$. 
	\end{proof}
	
	To solve the problem that this phenomenon raises for constructing models, we define relations between structures and infinitary sentences that keep track of our ability to make formulas true in further elementary extensions. These relations can be thought of as a notion of forcing, where as forcing conditions we use structures, ordered by elementary extension. This is related to the approach taken by Robinson in \cite{Robinson71}, and can be viewed as an extension of those methods to infinitary languages, where all extensions considered are elementary.
	
	As usual, there is a strong forcing and a weak forcing. The strong forcing is useful for defining genericity, while the weak notion is required for the definability of forcing.
	
	\subsection{The Strong Forcing Relation}
	
	Given a structure $\mc{A}$, $\psi(\ol{x}) \in\mathcal{L}_{\infty,\omega}$, and $\ol{a}\in A$, we define the strong forcing relation $\mc{A}\forces\psi(\ol{a})$ by the following recursive clauses.
	\begin{enumerate}
		\item If $\psi$ is atomic, $\mc{A}\forces \psi(\ol{a})$ if and only if $\mc{A}\models \psi(\ol{a})$.
		\item If $\psi(\ol{x}) = \neg\phi(\ol{x})$, $\mc{A}\forces\psi(\ol{a})$ if and only if for every $\mc{B}\succeq \mc{A}$, $\mc{B}\not\forces \phi(\ol{a})$.
		\item If $\psi(\ol{x}) = \bigdoublevee\limits_{\phi\in \Phi}\phi(\ol{x})$, $\mc{A}\forces\psi(\ol{a})$ if and only if for some $\phi\in\Phi$, $\mc{A}\forces\phi(\ol{a})$.
		\item If $\psi(\ol{x}) = \bigdoublewedge\limits_{\phi \in\Phi}\phi(\ol{x})$, $\mc{A}\forces \psi(\ol{a})$ if and only if for every $\mc{B}\succeq\mc{A}$, and $\phi\in \Phi$, there is a $\mc{C}\succeq\mc{B}$ such that $\mc{C}\forces \phi(\ol{a})$.
		\item If $\psi(\ol{x}) = \exists\ol{y}\phi(\ol{xy})$, $\mc{A}\forces \psi(\ol{a})$ if and only if for some $\ol{b}\in \mc{A}$, $\mc{A}\forces \phi(\ol{ab})$.
		\item If $\psi(\ol{x}) = \forall\ol{y}\phi(\ol{xy})$, $\mc{A}\forces\psi(\ol{a})$ if and only if for every $\mc{B}\succeq\mc{A}$ and $\ol{b}\in \mc{B}$, there is a $\mc{C}\succeq \mc{B}$ such that $\mc{C}\forces \phi(\ol{ab})$.
	\end{enumerate}
	This diverges from the definition of the satisfaction relation in clauses (2), (4) and (6). For finitary formulas, this makes no difference.
	
	\begin{lemma}\label{lemma:finitary-forcing-is-truth}
		If $\psi$ is finitary, $\mc{A}\forces \psi(\ol{a})$ if and only if $\mc{A}\models \psi(\ol{a})$.
	\end{lemma}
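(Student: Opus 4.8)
The plan is to prove both directions simultaneously by induction on the structure of the finitary formula $\psi$, establishing the equivalence not just for $\mc{A}$ but for every structure and every tuple, so that the induction hypothesis is available at the auxiliary structures appearing in the forcing clauses. The only engine needed is that finitary truth is preserved both upwards and downwards along elementary extensions: if $\mc{A} \preceq \mc{B}$ and $\phi$ is finitary with $\ol{a} \in \mc{A}$, then $\mc{A} \models \phi(\ol{a})$ if and only if $\mc{B} \models \phi(\ol{a})$. This is precisely what is required to collapse the three divergent clauses (2), (4) and (6) back to ordinary satisfaction.

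The base case (atomic $\psi$) is immediate from clause (1), and the clauses for finite disjunction (3) and existential quantification (5) agree verbatim with the corresponding satisfaction clauses, so the induction hypothesis closes these cases at once. The interesting cases are the three where forcing diverges. For negation $\psi = \neg\phi$, clause (2) says $\mc{A}\forces \neg\phi(\ol{a})$ if and only if $\mc{B} \not\forces \phi(\ol{a})$ for every $\mc{B} \succeq \mc{A}$; by the induction hypothesis this reads $\mc{B} \not\models \phi(\ol{a})$ for all such $\mc{B}$, and since finitary truth transfers both ways this is equivalent to $\mc{A} \not\models \phi(\ol{a})$, that is, $\mc{A} \models \neg\phi(\ol{a})$.

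For the (finite) conjunction $\psi = \bigdoublewedge_{\phi \in \Phi}\phi$, clause (4) asks that for every $\mc{B} \succeq \mc{A}$ and every $\phi \in \Phi$ there be some $\mc{C} \succeq \mc{B}$ with $\mc{C}\forces \phi(\ol{a})$. By the induction hypothesis together with upward and downward transfer, the existence of such a $\mc{C}$ is equivalent to $\mc{A} \models \phi(\ol{a})$ (if $\mc{A} \models \phi(\ol{a})$ take $\mc{C} = \mc{B}$; if $\mc{A} \not\models \phi(\ol{a})$ then no extension of $\mc{A}$ can satisfy $\phi(\ol{a})$), and this no longer depends on $\mc{B}$, so the whole clause reduces to $\mc{A} \models \phi(\ol{a})$ for every $\phi \in \Phi$, i.e. $\mc{A} \models \psi(\ol{a})$. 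The universal case $\psi = \forall \ol{y}\,\phi(\ol{x}\ol{y})$ is handled identically: by clause (6) and the induction hypothesis, the inner condition ``some $\mc{C} \succeq \mc{B}$ has $\mc{C} \models \phi(\ol{a}\ol{b})$'' collapses, since $\ol{b} \in \mc{B}$, to ``$\mc{B} \models \phi(\ol{a}\ol{b})$''; the clause then says $\mc{B} \models \forall \ol{y}\,\phi(\ol{a}\ol{y})$ for all $\mc{B} \succeq \mc{A}$, which by downward transfer (taking $\mc{B} = \mc{A}$) is equivalent to $\mc{A} \models \psi(\ol{a})$.

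The only point requiring care — and the nearest thing to an obstacle — is bookkeeping the quantifier over extensions in clauses (4) and (6): one must verify that the doubly-quantified ``for all $\mc{B}$, there exists $\mc{C}$'' pattern genuinely collapses. It does, precisely because for finitary $\phi$ the truth value of $\phi(\ol{a})$ is an invariant of the elementary-equivalence class over $\ol{a}$ and so is already settled in $\mc{A}$. No compactness or syntactic normal-form argument enters; everything follows from the two-way preservation of finitary formulas under $\preceq$.
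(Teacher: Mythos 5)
Your proposal is correct and takes essentially the same approach as the paper: an induction on the complexity of $\psi$ (with the hypothesis held uniformly over all structures and tuples) in which the two-way preservation of finitary formulas under $\preceq$ collapses the divergent clauses (2), (4) and (6) of the strong forcing relation back to ordinary satisfaction. Your explicit observation that the ``for all $\mc{B}$, there exists $\mc{C}$'' pattern collapses because finitary truth is already settled in $\mc{A}$ is exactly the mechanism the paper's proof uses.
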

	
	\begin{proof}
		We will prove this by induction on the complexity of $\psi$. All cases except those covered by clauses (2), (4) and (6) are identical to the satisfaction relation. For clause (2), let $\psi(\ol{x}) = \neg\phi(\ol{x})$. If $\phi$ is finitary, then $\mc{A}\forces \neg\phi(\ol{a})$ if and only if for every $\mc{B}\succeq \mc{A}$, $\mc{B}\not\forces \phi(\ol{a})$. Appealing to induction, this is true if and only if for every $\mc{B}\succeq\mc{A}$, $\mc{B}\models\neg \phi(\ol{a})$, which is true if and only if $\mc{A} \models\neg\phi(\ol{a})$.
		
		For clause (4), let $\psi = \bigwedge_{\phi\in\Phi}\phi(\ol{x})$, where $\Phi$ is finite.  $\mc{A}\forces \psi(\ol{a})$ if and only if for each $\phi\in\Phi$ and $\mc{B}\succeq\mc{A}$, there is a $\mc{C}\succeq\mc{B}$ such that $\mc{C}\forces \phi(\ol{a})$, or appealing to induction, $\mc{C}\models\phi(\ol{a})$. Because $\phi$ is finitary, this is true if and only if for every such $\mc{B}$, $\mc{B}\models\phi(\ol{a})$, or equivalently, if $\mc{A}\models\phi(\ol{a})$, for each $\phi\in\Phi$. This, in turn, is true if and only if $\mc{A}\models\psi(\ol{a})$.
		
		For clause (6), let $\psi = \forall\ol{y}\phi(\ol{xy})$. If $\phi$ is finitary, $\mc{A}\forces \forall\ol{y}\phi(\ol{ay})$ if and only if for every $\mc{B}\succeq\mc{A}$, and every $\ol{b}\in\mc{B}$, there is a $\mc{C}\succeq \mc{B}$ such that $\mc{C}\forces \phi(\ol{ab})$. Appealing to induction, this is true if and only if for every $\mc{B}\succeq\mc{A}$, $\ol{b}\in\mc{B}$, there is a $\mc{C}\succeq\mc{B}$ such that $\mc{C}\models\phi(\ol{ab})$. Because $\phi$ is finitary, this is true if and only if for every $\mc{B}\succeq\mc{A}$, $\ol{b}\in\mc{B}$, $\mc{B}\models \phi(\ol{ab})$, or equivalently, for every $\mc{B}\succeq\mc{A}$, $\mc{B}\models\forall\ol{y}\phi(\ol{ay})$. This, in turn, is true if and only if $\mc{A}\models\forall\ol{y}\phi(\ol{ay})$.
	\end{proof}
	
	For infinitary formulas, the relation $\forces$ is more stable than the satisfaction relation with respect to elementary extensions. (Contrast the Lemma below with Theorem \ref{theorem:alternation}.)
	
	\begin{lemma}\label{lemma:persistence}
		If $\mc{A}\preceq\mc{B}$, and $\mc{A}\forces\psi(\ol{a})$, then $\mc{B}\forces\psi(\ol{a})$.
	\end{lemma}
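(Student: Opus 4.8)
The plan is to induct on the complexity of the infinitary formula $\psi$, exploiting a basic asymmetry among the six clauses defining $\forces$. The crucial structural observation is that clauses (2), (4), and (6)---negation, infinite conjunction, and universal quantification---each define $\mc{A}\forces\psi(\ol{a})$ by a condition that universally quantifies over \emph{all} elementary extensions of $\mc{A}$. For these three clauses the conclusion follows immediately from transitivity of the elementary-extension relation, without even appealing to the inductive hypothesis: since $\mc{A}\preceq\mc{B}$, every $\mc{B}'\succeq\mc{B}$ also satisfies $\mc{B}'\succeq\mc{A}$, so any condition asserted to hold of all extensions of $\mc{A}$ holds in particular of all extensions of $\mc{B}$. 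Concretely, in the negation case $\mc{A}\forces\neg\phi(\ol{a})$ says that no extension of $\mc{A}$ forces $\phi(\ol{a})$; as the extensions of $\mc{B}$ are among the extensions of $\mc{A}$, none of them forces $\phi(\ol{a})$ either, giving $\mc{B}\forces\neg\phi(\ol{a})$. The conjunction and universal cases are handled identically, quoting the witnessing clause verbatim for each $\mc{B}'\succeq\mc{B}$ (using in clause (6) that any $\ol{b}\in\mc{B}'$ is a legitimate witness relative to $\mc{A}$ as well).

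The remaining clauses (1), (3), and (5)---atomic formulas, infinite disjunction, and existential quantification---are witness-based, and here I would use the inductive hypothesis together with the fact that elements of $\mc{A}$ remain available in $\mc{B}$. For an atomic $\psi$, forcing coincides with satisfaction by clause (1), and $\mc{A}\preceq\mc{B}$ (hence $\mc{A}\subseteq\mc{B}$) preserves atomic formulas in both directions, so $\mc{A}\forces\psi(\ol{a})$ yields $\mc{B}\forces\psi(\ol{a})$. For a disjunction, $\mc{A}$ forces some disjunct $\phi$, and the inductive hypothesis promotes $\mc{A}\forces\phi(\ol{a})$ to $\mc{B}\forces\phi(\ol{a})$, whence $\mc{B}$ forces the disjunction. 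For $\psi(\ol{x})=\exists\ol{y}\,\phi(\ol{xy})$, the witness $\ol{b}$ with $\mc{A}\forces\phi(\ol{ab})$ lies in $\mc{A}\subseteq\mc{B}$; the inductive hypothesis gives $\mc{B}\forces\phi(\ol{ab})$, and since $\ol{b}\in\mc{B}$ this yields $\mc{B}\forces\exists\ol{y}\,\phi(\ol{ay})$.

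I do not expect a genuine obstacle; the content is a clean induction whose only nonsyntactic inputs are that $\preceq$ is transitive, that it implies $\subseteq$, and that the parameters $\ol{a}$ and any existential witnesses $\ol{b}$ of $\mc{A}$ persist into $\mc{B}$. The one conceptual point worth flagging---and the reason the lemma holds for $\forces$ but fails for ordinary satisfaction, in contrast to Theorem \ref{theorem:alternation}---is precisely the dichotomy above: the clauses that would obstruct upward preservation for $\models$, namely the universal-flavoured ones, are exactly the clauses whose forcing definition quantifies over all elementary extensions, and this built-in quantification is what renders them automatically monotone.
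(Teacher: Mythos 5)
Your proof is correct and matches the paper's argument exactly: the same induction on complexity, handling the negation, infinite-conjunction, and universal clauses via transitivity of $\preceq$ (with no use of the inductive hypothesis), and the atomic, disjunction, and existential clauses via the inductive hypothesis together with the persistence of witnesses from $\mc{A}$ into $\mc{B}$. Your closing observation about why the universally-quantified clauses are automatically monotone is a nice gloss the paper leaves implicit, but the mathematical content is identical.
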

	\begin{proof}
		We will prove this by induction on the complexity of $\psi$. If $\psi$ is atomic, this is trivial. If $\psi = \neg\phi$, and $\mc{A}\forces\psi(\ol{a})$, then for any $\mc{C}\succeq \mc{B}$, $\mc{C}\succeq\mc{A}$, so $\mc{C}\not\forces \phi(\ol{a})$. Therefore, $\mc{B}\forces \psi(\ol{a})$. If $\psi = \bigdoublevee\limits_{\phi\in\Phi}\phi$, this follows by induction. If $\psi = \bigdoublewedge\limits_{\phi\in\Phi}\phi$, and $\mc{A}\forces\psi(\ol{a})$, then for any $\mc{C}\succeq\mc{B}$, and $\phi\in\Phi$, $\mc{C}\succeq\mc{A}$, so there is a $\mc{D}\succeq\mc{C}$ such that $\mc{D}\forces\phi(\ol{a})$. Therefore, $\mc{B}\forces\psi(\ol{a})$.
		If $\psi(\ol{x}) = \exists\ol{y}\phi(\ol{xy})$, and $\mc{A}\forces \psi(\ol{a})$, then for some $\ol{b}\in \mc{A}$, $\mc{A}\forces \phi(\ol{ab})$. Appealing to induction, $\mc{B}\forces\phi(\ol{ab})$, so $\mc{B}\forces \psi(\ol{a})$. If $\psi(\ol{x})=\forall\ol{y}\phi(\ol{xy})$, and $\mc{A}\forces \psi(\ol{a})$. Suppose $\mc{C}\succeq \mc{B}$, and $\ol{c}\in\mc{C}$. Then, $\mc{C}\succeq \mc{A}$, so because $\mc{A}\forces\psi(\ol{a})$, there is a $\mc{D}\succeq\mc{C}$ such that $\mc{D}\forces \phi(\ol{ac})$. Therefore, $\mc{B}\forces \psi(\ol{a})$.
	\end{proof}
	
	\begin{example}
		Let $\mc{A}$ and $\psi$ be the structure and formula respectively from Theorem \ref{theorem:alternation}. Then $\mc{A}\forces\psi$ (and so this is true for all elementary extensions of $\mc{A}$ as well).
	\end{example}
	\begin{proof}
		Recall that 
		\[\psi = \forall x \left(Q(x)\rightarrow \exists y\left(R(x,y)\wedge\bigdoublewedge_n \neg P_n(y)\right)\right)\]
		Suppose $\mc{B}\succeq\mc{A}$, and $a\in\mc{B}$. It suffices to show that for some $\mc{C}\succeq\mc{A}$
		\[\mc{C}\forces \neg Q(a)\vee \exists y(R(a,x)\wedge\bigdoublewedge\limits_n \neg P_n(y))\]
		In the case that $\mc{B}\models \neg Q(a)$, we can take $\mc{C}=\mc{B}$, so it suffices to consider the case that $\mc{B}\models Q(a)$. In this case, let $\mc{C}$ be obtained by adding a non-standard element $b$, in the sense of Theorem \ref{theorem:alternation} to the block corresponding to $a$. We will show that 
		\[\mc{C}\forces R(a,b)\wedge\bigdoublewedge\limits_n \neg P_n(b)\]
		It suffices to show that $\mc{C}\forces R(a,b)$, and for each $n$, $\mc{C}\forces\neg P_n(b)$. This is true because $\mc{C}\models R(a,b)$, and $\mc{C}\models \neg P_n(b)$ for each $n$.
	\end{proof} 
	
	For a structure $\mc{A}$, $\ol{a}\in \mc{A}$, and a formula $\psi$, it is immediate from the definition of $\mc{A}\forces\neg\psi(\ol{a})$ that it cannot be the case that $\mc{A}\forces\neg\psi(\ol{a})$ and $\mc{A}\forces \psi(\ol{a})$.
	We say that $\mc{A}$ decides $\psi(\ol{a})$ if either $\mc{A}\forces \psi(\ol{a})$ or $\mc{A}\forces \neg\psi(\ol{a})$.
	
	\begin{lemma}\label{lemma:decision}
		For any structure $\mc{A}$, $\ol{a}\in \mc{A}$, and formula $\psi(\ol{x})$, there is a $\mc{B}\succeq \mc{A}$ such that $\mc{B}$ decides $\psi(\ol{a})$.
	\end{lemma}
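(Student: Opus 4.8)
The plan is to exploit the fact that clause (2) in the definition of $\forces$ defines the forcing of a negation precisely as a statement quantifying over \emph{all} elementary extensions, so that the desired dichotomy falls out with essentially no work. I would split into two cases according to whether $\mc{A}$ itself already forces $\neg\psi(\ol{a})$.

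First, suppose $\mc{A}\forces\neg\psi(\ol{a})$. Then $\mc{A}$ already decides $\psi(\ol{a})$, and one takes $\mc{B}=\mc{A}$. (Here one uses that $\succeq$ is reflexive, so $\mc{A}\succeq\mc{A}$ is a legitimate choice of extension.) Otherwise, $\mc{A}\not\forces\neg\psi(\ol{a})$. Unwinding clause (2) applied to the formula $\neg\psi$, the statement $\mc{A}\forces\neg\psi(\ol{a})$ asserts exactly that for every $\mc{B}\succeq\mc{A}$ we have $\mc{B}\not\forces\psi(\ol{a})$. Its failure therefore produces some $\mc{B}\succeq\mc{A}$ with $\mc{B}\forces\psi(\ol{a})$, and this $\mc{B}$ decides $\psi(\ol{a})$.

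I do not expect any real obstacle here: the content is entirely in the shape of the forcing definition for negation, which was designed precisely so that ``$\mc{A}$ fails to force $\neg\psi(\ol{a})$'' is synonymous with ``some elementary extension of $\mc{A}$ forces $\psi(\ol{a})$.'' The one point to keep in mind is that clause (2) governs the forcing of $\neg\psi$ regardless of the internal structure of $\psi$ (even when $\psi$ is itself a negation, so that $\neg\psi$ is a double negation), so no induction on the complexity of $\psi$ is needed—the whole argument is a single case split at the top level.
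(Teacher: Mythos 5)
Your proof is correct and is essentially identical to the paper's: the same case split on whether $\mc{A}\forces\neg\psi(\ol{a})$, with the second case handled by unwinding clause (2) of the definition of $\forces$ to extract an extension forcing $\psi(\ol{a})$. The paper's proof is just a terser version of the same argument.
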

	\begin{proof}
		If $\mc{A}\forces \neg\psi(\ol{a})$, we can take $\mc{B}=\mc{A}$. Otherwise, there is some $\mc{B}\succeq \mc{A}$ with $\mc{B}\forces\psi(\ol{a})$.
	\end{proof}
	
	\subsubsection{Generic Structures}
	In order to obtain useful information from the forcing relation, we will construct structures in which formulas we have forced become true. Let $\mathbb{A}$ be a fragment of $\mc{L}_{\infty,\omega}$. We say that a structure $\mc{G}$ is $\mathbb{A}$-generic if for any $\psi\in\mathbb{A}$, and $\ol{a}\in\mc{G}$, $\mc{G}$ decides $\psi(\ol{a})$. The next lemma shows that any structure can be extended to a generic structure.
	
	\begin{lemma}
		For any structure $\mc{A}$ and fragment $\mathbb{A}$, there is an $\mathbb{A}$-generic $\mc{G}\succeq\mc{A}$.
	\end{lemma}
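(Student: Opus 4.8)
The plan is to construct $\mc{G}$ as the union of an elementary chain over $\mc{A}$, built so that every formula of $\mathbb{A}$ with parameters is eventually decided, and to use persistence (Lemma~\ref{lemma:persistence}) to guarantee that once a formula is decided it stays decided at all later stages. Two standard facts carry the bulk of the work: Lemma~\ref{lemma:decision}, which lets us decide a \emph{single} pair $(\psi,\ol{a})$ by passing to an elementary extension; and the classical elementary chain theorem, that the union of an elementary chain $\langle \mc{A}_i\rangle$ is an elementary extension of each $\mc{A}_i$, so that $\preceq$ is preserved at limits. The one real obstacle is bookkeeping: each time we pass to an extension to decide some $\psi(\ol{a})$ we may introduce new elements, and hence new pairs $(\psi',\ol{b})$ that are not yet decided. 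I would handle this by iterating the decision process in two nested layers.

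First I would define a one-step operation $F$ as follows. Given a structure $\mc{B}$, enumerate all pairs $(\psi,\ol{a})$ with $\psi\in\mathbb{A}$ and $\ol{a}$ a finite tuple from $\mc{B}$, say as $\langle(\psi_i,\ol{a}_i):i<\sigma\rangle$ where $\sigma=|\mathbb{A}|\cdot|\mc{B}|^{<\omega}$ (or simply the number of such pairs). Build an internal elementary chain $\mc{B}=\mc{B}_0\preceq\mc{B}_1\preceq\cdots$ of length $\sigma$: at a successor step apply Lemma~\ref{lemma:decision} to obtain $\mc{B}_{i+1}\succeq\mc{B}_i$ deciding $\psi_i(\ol{a}_i)$ (legitimate since $\ol{a}_i\in\mc{B}\subseteq\mc{B}_i$), and at limits take unions. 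By Lemma~\ref{lemma:persistence}, applied to $\psi_i$ or to $\neg\psi_i$, a decision made at stage $i+1$ survives to every later stage and to the limit; hence $F(\mc{B}):=\bigcup_{i<\sigma}\mc{B}_i$ is an elementary extension of $\mc{B}$ that decides $\psi(\ol{a})$ for every $\psi\in\mathbb{A}$ and every $\ol{a}\in\mc{B}$.

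Then I would iterate $F$ over $\omega$: set $\mc{A}_0=\mc{A}$, $\mc{A}_{n+1}=F(\mc{A}_n)$, and $\mc{G}=\bigcup_{n<\omega}\mc{A}_n$. This is again an elementary chain, so $\mc{G}\succeq\mc{A}_n$ for every $n$. To verify genericity, fix $\psi\in\mathbb{A}$ and $\ol{a}\in\mc{G}$; since $\ol{a}$ is finite, $\ol{a}\in\mc{A}_n$ for some $n$, and then $\mc{A}_{n+1}=F(\mc{A}_n)$ decides $\psi(\ol{a})$ by construction. Applying Lemma~\ref{lemma:persistence} once more (to $\psi$ or $\neg\psi$) and using $\mc{G}\succeq\mc{A}_{n+1}$, we conclude that $\mc{G}$ decides $\psi(\ol{a})$. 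As this holds for all $\psi\in\mathbb{A}$ and all $\ol{a}\in\mc{G}$, the structure $\mc{G}$ is $\mathbb{A}$-generic and extends $\mc{A}$, as required.

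Finally, I expect the only point needing care is the interaction between persistence and the limit stages: I must ensure that $\preceq$ (full finitary elementarity, equivalently $\preceq_n$ for all $n$) really is preserved at unions, so that Lemma~\ref{lemma:persistence} is applicable along the whole chain. This is exactly the elementary chain theorem, and it is what licenses both the internal limits in $F$ and the final union defining $\mc{G}$. Alternatively, one could collapse the two layers into a single transfinite elementary chain with a suitable bookkeeping enumeration; the nested version is just a convenient way to avoid tracking which parameters are available at which stage.
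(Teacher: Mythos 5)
Your proposal is correct and takes essentially the same route as the paper: the identical two-layer construction of a one-step operation $F$ that decides every pair $(\psi,\ol{a})$ over the current structure via a transfinite elementary chain (Lemma~\ref{lemma:decision} at successors, unions at limits), followed by an $\omega$-iteration $\mc{A}\preceq F(\mc{A})\preceq F^2(\mc{A})\preceq\cdots$ whose union is generic, with Lemma~\ref{lemma:persistence} guaranteeing that decisions survive to $\mc{G}$. Your explicit attention to persistence at limit stages and to the bookkeeping of new parameters is exactly the (partly implicit) content of the paper's argument, so nothing further is needed.
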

	
	\begin{proof}
		Let $\mc{C}$ be a structure. We will construct a structure $F(\mc{C})$ extending $\mc{C}$ as follows. Consider the set of pairs $(\psi(\ol{x}),\ol{c})$ with $\psi\in \mathbb{A}$, and $\ol{c}\in \mc{C}$ of length $\ol{x}$. Let $\{(\psi_\alpha(\ol{x}),\ol{c}_\alpha)|\alpha<\gamma\}$ be a well ordering of this set. We will define an elementary chain of length $\gamma$ by transfinite recursion. Let $\mc{C}_0 = \mc{C}$. Having defined $\mc{C}_\alpha$, we define $\mc{C}_{\alpha+1}$ as follows. By Lemma \ref{lemma:decision}, there is some $\mc{B}\succeq \mc{C}_\alpha$ that decides $\psi_\alpha(\ol{c}_\alpha)$. Let $\mc{C}_{\alpha+1} = \mc{B}$. For limit ordinals $\beta < \gamma$, let $\mc{C}_\beta = \bigcup_{\alpha<\beta}\mc{C}_\alpha$. This defines an elementary chain $\{\mc{C}_\alpha|\alpha<\gamma\}$.
		
		Let $F(\mc{C}) = \bigcup_{\alpha<\gamma}\mc{C}_\alpha$. Then, $F(\mc{C})\succeq \mc{C}$, and for every $\psi(\ol{x})\in \mathbb{A}$, $\ol{c}\in\mc{C}$, $F(\mc{C})$ decides $\psi(\ol{c})$. Now consider the elementary chain
		\[\mc{A}\preceq F(\mc{A})\preceq F^2(\mc{A})\preceq\dots\]
		Let $\mc{G} = \bigcup_n F^n(\mc{A})$. Then, $\mc{A}\preceq\mc{G}$. For any $\psi(\ol{x})\in \mathbb{A}$, $\ol{a}\in\mc{G}$, $\ol{a}\in F^n(\mc{A})$ for some $n$, so $F^{n+1}(\mc{A})$ decides $\psi(\ol{a})$, which implies that $\mc{G}$ decides $\psi(\ol{a})$ because $F^{n+1}(\mc{A})\preceq \mc{G}$. Therefore, $\mc{G}$ is $\mathbb{A}$-generic.
	\end{proof}
	
	The next lemma shows that generic structures have the desired property, providing models of formulas we have forced.
	
	\begin{lemma}\label{lemma:forcing-is-truth-in-generics}
		Suppose $\psi(\ol{x})\in\mathbb{A}$, $\mc{G}$ is $\mathbb{A}$-generic, and $\ol{a}\in\mc{G}$. Then, $\mc{G}\forces \psi(\ol{a})$ if and only if $\mc{G}\models \psi(\ol{a})$.
	\end{lemma}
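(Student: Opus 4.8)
The plan is to prove the biconditional by induction on the complexity of $\psi$, leaning on two facts about the generic $\mc{G}$ at every step. The first is persistence (Lemma \ref{lemma:persistence}): forcing is preserved upward along $\preceq$. The second is genericity itself: since $\mathbb{A}$ is closed under subformulas and negations, every formula $\chi$ (and its negation) that I meet in the induction lies in $\mathbb{A}$, so $\mc{G}$ decides $\chi(\ol{c})$ for all $\ol{c}\in\mc{G}$; as $\mc{G}$ cannot force both $\chi$ and $\neg\chi$, this gives the usable equivalence $\mc{G}\forces\neg\chi(\ol{c})$ iff $\mc{G}\not\forces\chi(\ol{c})$. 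The atomic case is immediate from clause (1), and the negation case then chains these: $\mc{G}\forces\neg\phi(\ol{a})$ iff $\mc{G}\not\forces\phi(\ol{a})$ iff (inductive hypothesis) $\mc{G}\not\models\phi(\ol{a})$ iff $\mc{G}\models\neg\phi(\ol{a})$.

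The disjunction and existential clauses (3) and (5) are the ``local'' clauses, referring only to $\mc{G}$, and go through cleanly in both directions: $\mc{G}\forces\bigdoublevee_{\phi}\phi(\ol{a})$ iff some disjunct is forced iff (inductive hypothesis) some disjunct is satisfied iff $\mc{G}\models\bigdoublevee_{\phi}\phi(\ol{a})$, and for $\exists$ a forcing witness $\ol{b}\in\mc{G}$ is exactly a satisfaction witness by the inductive hypothesis. The conjunction and universal clauses (4) and (6) quantify over all elementary extensions, so they take more care. For the forward direction I would instantiate the clause at $\mc{B}=\mc{G}$: this yields, for each conjunct $\phi$ (resp.\ each $\ol{b}\in\mc{G}$), some $\mc{C}\succeq\mc{G}$ with $\mc{C}\forces\phi(\ol{a})$ (resp.\ $\mc{C}\forces\phi(\ol{ab})$). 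Genericity then upgrades this to $\mc{G}\forces\phi$, since if instead $\mc{G}\forces\neg\phi$, persistence would give $\mc{C}\forces\neg\phi$, contradicting $\mc{C}\forces\phi$; the inductive hypothesis finishes the forward direction. The backward direction of the conjunction is easy and symmetric: from $\mc{G}\models\bigdoublewedge_{\phi}\phi(\ol{a})$ the inductive hypothesis gives $\mc{G}\forces\phi(\ol{a})$ for every conjunct, and persistence lets me take $\mc{C}=\mc{B}$ to verify clause (4), because all parameters remain inside $\mc{G}$.

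I expect the backward direction of the universal case to be the main obstacle, and it is genuinely the only case where the clean pattern breaks. The trouble is that clause (6) ranges the witness $\ol{b}$ over an arbitrary $\mc{B}\succeq\mc{G}$, so $\ol{b}$ need not lie in $\mc{G}$ and the inductive hypothesis is not directly available. The plan is to reduce the problem to ruling out any $\mc{B}\succeq\mc{G}$ and $\ol{b}\in\mc{B}$ with $\mc{B}\forces\neg\phi(\ol{ab})$ (which, by clause (2), is exactly what would prevent forcing $\phi(\ol{ab})$ in some extension of $\mc{B}$), and to contradict $\mc{G}\models\forall\ol{y}\phi(\ol{a})$ as follows: extend $\mc{B}$ to a generic $\mc{G}'\succeq\mc{B}$ using the preceding lemma on the existence of generic extensions, apply persistence and the inductive hypothesis (through the derived equivalence for $\neg\phi$) to obtain $\mc{G}'\models\neg\phi(\ol{ab})$, and then push the truth of $\forall\ol{y}\phi(\ol{a})$ up the chain $\mc{G}\preceq\mc{G}'$. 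The delicate point — and where the full strength of genericity must be used, rather than the mere fact that $\mc{G}$ satisfies $\psi$ — is controlling the new elements introduced in $\mc{B}$, i.e.\ legitimately transferring a universally quantified infinitary truth from $\mc{G}$ to witnesses appearing only in $\mc{G}'$. This is the one step I would expect to require real work, and it is where the argument must exploit that $\mc{G}$ decides every formula of $\mathbb{A}$, not just $\psi$ itself.
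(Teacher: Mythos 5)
All of your cases except the backward direction of the universal clause coincide with the paper's proof: atomic by definition, negation via decidedness, disjunction and existential directly from the clauses, conjunction and the forward universal direction by instantiating $\mc{B}=\mc{G}$ and using decidedness plus persistence. But in the one case you yourself flag as the obstacle, your plan has a genuine gap, and the step you propose would fail. You want to derive a contradiction from $\mc{G}\models\forall\ol{y}\phi(\ol{a})$ by producing a generic $\mc{G}'\succeq\mc{B}\succeq\mc{G}$ with $\mc{G}'\models\neg\phi(\ol{a}\ol{b})$ for some $\ol{b}\in\mc{B}$, and then ``push the truth of $\forall\ol{y}\phi(\ol{a})$ up the chain $\mc{G}\preceq\mc{G}'$.'' That transfer is exactly what infinitary formulas do not admit: Theorem \ref{theorem:alternation} exhibits sentences whose truth value can be flipped by elementary extension, and $\forall\ol{y}\phi$ here is a genuinely infinitary formula, so there is no license to move its truth from $\mc{G}$ to $\mc{G}'$. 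Nor can you appeal to the lemma being proved (truth $=$ forcing in generics) to mediate the transfer, since that only relates truth and forcing within a single generic structure; it says nothing about $\mc{G}$ satisfying $\phi$ at parameters $\ol{b}$ that live only in $\mc{G}'$. You correctly sense that this is where the real work lies, but the proposal contains no mechanism to do it.

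The paper's resolution runs in the opposite direction: rather than transporting \emph{truth} upward, it transports \emph{forcing} and uses the strong forcing clause for $\exists$ to pull the witness down into $\mc{G}$. Concretely, arguing contrapositively from $\mc{G}\not\forces\forall\ol{y}\phi(\ol{a})$, one gets $\mc{B}\succeq\mc{G}$ and $\ol{b}\in\mc{B}$ such that no $\mc{C}\succeq\mc{B}$ forces $\phi(\ol{a}\ol{b})$; taking a generic $\mc{G}'\succeq\mc{B}$ gives $\mc{G}'\forces\neg\phi(\ol{a}\ol{b})$, hence $\mc{G}'\forces\exists\ol{y}\neg\phi(\ol{a}\ol{y})$. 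Since $\mc{G}$ decides this formula and $\mc{G}'\succeq\mc{G}$ forces it, Lemma \ref{lemma:no-choices} (equivalently, decidedness plus persistence) yields $\mc{G}\forces\exists\ol{y}\neg\phi(\ol{a}\ol{y})$. Now the point that saves the day: the strong forcing clause for existentials requires a witness \emph{inside} the forcing structure, so there is $\ol{b}'\in\mc{G}$ with $\mc{G}\forces\neg\phi(\ol{a}\ol{b}')$, hence $\mc{G}\not\forces\phi(\ol{a}\ol{b}')$, and the inductive hypothesis (applied to $\phi$ at parameters in $\mc{G}$) gives $\mc{G}\not\models\phi(\ol{a}\ol{b}')$, so $\mc{G}\not\models\forall\ol{y}\phi(\ol{a})$. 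This is precisely the maneuver your outline is missing: the problematic external witness $\ol{b}$ is never confronted directly; it is replaced by an internal witness $\ol{b}'\in\mc{G}$ manufactured by genericity, after which everything happens inside $\mc{G}$ where the inductive hypothesis applies.
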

	\begin{proof}
		We prove this by induction on the complexity of $\psi$. For $\psi$ atomic, this is true by definition. Suppose $\psi(\ol{x})=\neg\phi(\ol{x})$. Then $\mc{G}\forces\psi(\ol{a})$ if and only if $\mc{G}\not\forces \phi(\ol{a})$, because $\mc{G}$ is $\mathbb{A}$-generic. Appealing to induction, this is true if and only if $\mc{G}\not\models \phi(\ol{a})$, or equivalently, if $\mc{G}\models \neg\phi(\ol{a})$. For $\psi = \bigdoublevee\limits_{\phi\in\Phi}\phi$, or $\psi = \exists\ol{y}\phi(\ol{y})$, the defining clause of $\forces$ is identical to that of the satisfaction relation, and the claim follows by induction.
		
		Suppose $\psi = \bigdoublewedge\limits_{\phi\in\Phi}\phi$. If $\mc{G}\forces\psi(\ol{a})$, then for any $\phi$, there is a $\mc{B}\succeq\mc{G}$ such that $\mc{B}\forces\phi(\ol{a})$. Because $\mc{G}$ decides $\phi(\ol{a})$, it must be that $\mc{G}\forces\phi(\ol{a})$, so appealing to induction, $\mc{G}\models\phi(\ol{a})$. We conclude that $\mc{G}\models\psi(\ol{a})$. Suppose conversely that $\mc{G}\models\psi(\ol{a})$. Then, for each $\phi\in\Phi$, $\mc{G}\models\phi(\ol{a})$, so appealing to induction, $\mc{G}\forces\phi(\ol{a})$. For any $\mc{B}\succeq\mc{G}$, Lemma \ref{lemma:persistence} implies that $\mc{B}\forces\phi(\ol{a})$. We conclude that $\mc{G}\forces\psi(\ol{a})$.
		
		Suppose $\psi(\ol{x}) = \forall\ol{y}\phi(\ol{xy})$. If $\mc{G}\forces \psi(\ol{a})$, then for every $\ol{b}\in\mc{G}$, there is a $\mc{B}\succeq \mc{G}$ such that $\mc{B}\forces \phi(\ol{ab})$. In this case, $\mc{G}\not\forces\neg\phi(\ol{ab})$, so $\mc{G}\forces \phi(\ol{ab})$. Appealing to induction, $\mc{G}\models\phi(\ol{ab})$. We conclude that $\mc{G}\models \psi(\ol{a})$. Suppose conversely that $\mc{G}\not\forces\psi(\ol{a})$. Then there is some $\mc{B}\succeq\mc{G}$, and $\ol{b}\in\mc{B}$, such that for any $\mc{C}\succeq\mc{B}$, $\mc{C}\not\forces \phi(\ol{ab})$. Let $\mc{G}^\prime\succeq\mc{B}$ be $\mathbb{A}$-generic. Then, $\mc{G}^\prime\forces\neg\phi(\ol{ab})$. This implies that $\mc{G}^\prime\forces \exists\ol{y}\neg\phi(\ol{ay})$, so $\mc{G}\forces \exists\ol{y}\neg\phi(\ol{ay})$. That is, there is some $\ol{b}\in\mc{G}$ such that $\mc{G}\forces\neg\phi(\ol{ab})$. In this case, $\mc{G}\not\forces \phi(\ol{ab})$, so appealing to induction, $\mc{G}\not\models \phi(\ol{ab})$. Then, $\mc{G}\not\models\psi(\ol{a})$.
	\end{proof}
	
	It may seem as though we are making arbitrary choices about which formulas to force when we construct a generic extension of a structure $\mc{A}$. However, due to elementary amalgamation, these choices can only be made in one way. We highlight this as one of the key special features of this forcing.
	
	\begin{lemma}\label{lemma:no-choices}
		Let $\mc{A}$ be a structure, $\ol{a}\in\mc{A}$, and $\psi(\ol{x})$ be a formula. If $\mc{B}\succeq\mc{A}$, and $\mc{B}\forces \psi(\ol{a})$, then for every $\mc{C}\succeq \mc{A}$ that decides $\psi(\ol{a})$, $\mc{C}\forces\psi(\ol{a})$.
	\end{lemma}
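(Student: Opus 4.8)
The plan is to argue by contradiction, using elementary amalgamation together with the persistence of forcing under elementary extensions (Lemma \ref{lemma:persistence}). Suppose $\mc{B}\succeq\mc{A}$ with $\mc{B}\forces\psi(\ol{a})$, and let $\mc{C}\succeq\mc{A}$ decide $\psi(\ol{a})$. Since $\mc{C}$ decides $\psi(\ol{a})$, the only alternative to the desired conclusion is that $\mc{C}\forces\neg\psi(\ol{a})$, and my goal is to rule this out. The essential idea is that $\mc{B}$ and $\mc{C}$, as two elementary extensions of the common structure $\mc{A}$, cannot genuinely ``disagree'' about forcing $\psi(\ol{a})$, because a single structure lying above both of them would have to force it (via persistence from $\mc{B}$) and also fail to force it (via the definition of $\mc{C}\forces\neg\psi$).

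The key step is to amalgamate $\mc{B}$ and $\mc{C}$ over $\mc{A}$. By the elementary amalgamation theorem — obtained by the usual compactness argument applied to the union of the elementary diagrams of $\mc{B}$ and $\mc{C}$, with the constants for elements of $\mc{A}$ identified — there is a structure $\mc{D}$ with elementary embeddings of $\mc{B}$ and of $\mc{C}$ into $\mc{D}$ that agree on $\mc{A}$. Identifying $\mc{B}$ and $\mc{C}$ with their images, I may regard both as elementary substructures of $\mc{D}$, and crucially $\ol{a}\in\mc{A}$ has the same interpretation in each, so $\psi(\ol{a})$ refers to the same tuple throughout.

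Now I would extract the contradiction. On one hand, $\mc{B}\preceq\mc{D}$ and $\mc{B}\forces\psi(\ol{a})$, so Lemma \ref{lemma:persistence} gives $\mc{D}\forces\psi(\ol{a})$. On the other hand, if $\mc{C}\forces\neg\psi(\ol{a})$, then by the defining clause (2) for forcing a negation every $\mc{E}\succeq\mc{C}$ satisfies $\mc{E}\not\forces\psi(\ol{a})$; applying this to $\mc{E}=\mc{D}$ yields $\mc{D}\not\forces\psi(\ol{a})$. These are incompatible, so $\mc{C}\forces\neg\psi(\ol{a})$ is impossible, and since $\mc{C}$ decides $\psi(\ol{a})$ we conclude $\mc{C}\forces\psi(\ol{a})$. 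The only nonroutine ingredient is elementary amalgamation, which is entirely standard; the one point requiring care is that amalgamation supplies elementary \emph{embeddings} rather than literal inclusions, so I must check that the parameters $\ol{a}$ are preserved (they are, being in the shared part $\mc{A}$). I therefore do not expect a serious obstacle: the substance of the lemma is exactly that the asymmetry in clause (2) and the persistence of $\forces$ combine through amalgamation to make the forcing choice unique.
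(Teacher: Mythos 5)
Your proposal is correct and is essentially the paper's own proof: amalgamate $\mc{B}$ and $\mc{C}$ over $\mc{A}$ into a common elementary extension $\mc{D}$, use Lemma \ref{lemma:persistence} to transfer $\mc{B}\forces\psi(\ol{a})$ up to $\mc{D}$, and derive a contradiction with $\mc{C}\forces\neg\psi(\ol{a})$. The only cosmetic difference is that you read off $\mc{D}\not\forces\psi(\ol{a})$ directly from clause (2) at $\mc{C}$, while the paper persists $\neg\psi$ to $\mc{D}$ and cites the incompatibility of $\mc{D}\forces\psi(\ol{a})$ with $\mc{D}\forces\neg\psi(\ol{a})$ --- these unwind to the same argument.
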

	
	\begin{proof}
		Suppose $\mc{A}\preceq\mc{C}$ and $\mc{C}\forces\neg\psi(\ol{a})$. By the elementary amalgamation theorem, there is a $\mc{D}$ such that $\mc{B}\preceq\mc{D}$, and $\mc{C}\preceq\mc{D}$. Then, Lemma \ref{lemma:persistence} implies that $\mc{D}\forces \psi(\ol{a})$, and $\mc{D}\forces \neg\psi(\ol{a})$, which is a contradiction.
	\end{proof}
	
	Because of Lemma \ref{lemma:no-choices}, we can regard all the information as to which formulas will be forced by extensions of a structure $\mc{A}$ as already present in $\mc{A}$. The next section defines a relation that captures this information.
	
	\subsection{The Weak Forcing Relation}
	
	It will be convenient to work with the weak forcing relation, denoted $\mc{A}\forces^* \psi(\ol{a})$. This can be defined in a variety of ways, all of which are equivalent. We will provisionally define $\mc{A}\forces^* \psi$ to be $\mc{A}\forces\neg\neg\psi$. It is a standard fact that the weak forcing relation satisfies the following recursive clauses:
	\begin{enumerate}
		\item If $\psi$ is atomic, $\mc{A}\forces^* \psi(\ol{a})$ if and only if $\mc{A}\models \psi(\ol{a})$.
		\item If $\psi(\ol{x}) = \neg\phi(\ol{x})$, $\mc{A}\forces^*\psi(\ol{a})$ if and only if for every $\mc{B}\succeq\mc{A}$, $\mc{B}\not\forces^* \phi(\ol{a})$.
		\item If $\psi(\ol{x}) = \bigdoublevee\limits_{\phi\in \Phi}\phi(\ol{x})$, $\mc{A}\forces^*\psi(\ol{a})$ if and only if for each $\mc{B} \succeq \mc{A}$, there is $\mc{C} \succeq \mc{B}$ and  $\phi\in\Phi$ such that $\mc{C}\forces^*\phi(\ol{a})$.
		\item If $\psi(\ol{x}) = \bigdoublewedge\limits_{\phi \in\Phi}\phi(\ol{x})$, $\mc{A}\forces^*\psi(\ol{a})$ if and only if for every $\phi\in \Phi$, $\mc{A}\forces^*\phi(\ol{a})$.
		\item If $\psi(\ol{x}) = \exists\ol{y}\phi(\ol{xy})$, $\mc{A}\forces^*\psi(\ol{a})$ if and only if for all $\mc{B}\succeq \mc{A}$ there is $\mc{C} \succeq \mc{B}$ and $\ol{b}\in \mc{C}$ such that $\mc{C}\forces^* \phi(\ol{ab})$.
		\item If $\psi(\ol{x}) = \forall\ol{y}\phi(\ol{xy})$, $\mc{A}\forces^*\psi(\ol{a})$ if and only if for every $\mc{B} \succeq \mc{A}$ and $\ol{b}\in \mc{B}$, $\mc{B}\forces^* \phi(\ol{ab})$.
	\end{enumerate}
	
	The following lemma establishes other equivalent characterizations of the weak forcing relation, which are unique to this notion of forcing and result from elementary amalgamation. (In essence, elementary amalgamation allows us to simplify being ``dense below''.)
	
	\begin{lemma}\label{lemma:equivalent-characterizations}
		Let $\psi$ be an $\mc{L}_{\infty,\omega}$ formula. The following are equivalent.
		\begin{enumerate}
			\item $\mc{A}\forces^*\psi(\ol{a})$.
			\item For some $\mc{B}\succeq\mc{A}$, $\mc{B}\forces\psi(\ol{a})$.
			\item For every $\mc{B}\succeq\mc{A}$ that decides $\psi(\ol{a})$, $\mc{B}\forces\psi(\ol{a})$
			\item If $\psi\in\mathbb{A}$, and $\mc{G}\succeq \mc{A}$ is $\mathbb{A}$-generic, $\mc{G}\models\psi(\ol{a})$.
		\end{enumerate}
	\end{lemma}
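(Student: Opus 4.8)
The plan is to unfold the provisional definition $\mc{A}\forces^*\psi(\ol{a}) := \mc{A}\forces\neg\neg\psi(\ol{a})$ and then run the equivalences off the earlier lemmas, with elementary amalgamation doing the essential work. First I would compute what clause (1) actually asserts. Applying clause (2) of strong forcing twice, $\mc{A}\forces\neg\neg\psi(\ol{a})$ holds if and only if for every $\mc{B}\succeq\mc{A}$ there is some $\mc{C}\succeq\mc{B}$ with $\mc{C}\forces\psi(\ol{a})$ --- that is, $\psi(\ol{a})$ is forced \emph{densely below} $\mc{A}$. With (1) rephrased this way, the implication (1)$\Rightarrow$(2) is immediate by instantiating $\mc{B}=\mc{A}$.

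The heart of the argument is (2)$\Rightarrow$(1), where the dense-below condition collapses to a single witness because of elementary amalgamation. Assuming $\mc{B}_0\succeq\mc{A}$ with $\mc{B}_0\forces\psi(\ol{a})$, and given an arbitrary $\mc{B}\succeq\mc{A}$, I would amalgamate $\mc{B}_0$ and $\mc{B}$ over $\mc{A}$ (via the elementary amalgamation theorem, exactly as in Lemma \ref{lemma:no-choices}) to obtain $\mc{D}$ with $\mc{B}_0\preceq\mc{D}$ and $\mc{B}\preceq\mc{D}$. Persistence (Lemma \ref{lemma:persistence}) then gives $\mc{D}\forces\psi(\ol{a})$, and since $\mc{D}\succeq\mc{B}$, this produces the required extension, yielding (1) and completing (1)$\iff$(2).

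For (2)$\iff$(3) I would invoke the earlier lemmas directly. The direction (2)$\Rightarrow$(3) is precisely Lemma \ref{lemma:no-choices}: a single forcing extension $\mc{B}\succeq\mc{A}$ forces every deciding extension to force $\psi(\ol{a})$. Conversely, (3)$\Rightarrow$(2) follows from Lemma \ref{lemma:decision}, which supplies a $\mc{B}\succeq\mc{A}$ deciding $\psi(\ol{a})$; by (3) that $\mc{B}$ forces $\psi(\ol{a})$. Finally, for the generic characterization, (3)$\Rightarrow$(4): any $\mathbb{A}$-generic $\mc{G}\succeq\mc{A}$ decides $\psi(\ol{a})$ since $\psi\in\mathbb{A}$, so (3) gives $\mc{G}\forces\psi(\ol{a})$, and Lemma \ref{lemma:forcing-is-truth-in-generics} converts this to $\mc{G}\models\psi(\ol{a})$. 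For (4)$\Rightarrow$(2) I would close a fragment $\mathbb{A}$ around $\psi$ and take an $\mathbb{A}$-generic $\mc{G}\succeq\mc{A}$ (which exists by the generic-extension lemma); then (4) gives $\mc{G}\models\psi(\ol{a})$, Lemma \ref{lemma:forcing-is-truth-in-generics} gives $\mc{G}\forces\psi(\ol{a})$, and $\mc{G}\succeq\mc{A}$ is the witness for (2).

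I expect the only genuinely substantive step to be (2)$\Rightarrow$(1): everything else is a short deduction from an already-established lemma, whereas this step is where the ``dense below'' quantifier structure of weak forcing is simplified to a bare existential, and it is precisely the use of elementary amalgamation together with persistence that makes this forcing behave better than a generic Robinson-style forcing. I would be careful that the amalgamation is taken \emph{over} $\mc{A}$, so that both $\mc{B}_0$ and $\mc{B}$ embed elementarily into the amalgam, since this is exactly what lets persistence transport $\mc{B}_0\forces\psi(\ol{a})$ up to $\mc{D}$.
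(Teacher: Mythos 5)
Your proposal is correct and uses essentially the same ingredients as the paper's proof (Lemmas \ref{lemma:decision}, \ref{lemma:no-choices}, \ref{lemma:persistence}, and \ref{lemma:forcing-is-truth-in-generics}); the only difference is organizational, in that you close the loop with a direct amalgamation argument for (2)$\Rightarrow$(1) --- which just inlines the proof of Lemma \ref{lemma:no-choices} --- whereas the paper runs the cycle (1)$\Rightarrow$(2)$\Rightarrow$(3)$\Rightarrow$(4)$\Rightarrow$(1), obtaining the ``dense below'' condition in the last step from Lemmas \ref{lemma:decision} and \ref{lemma:no-choices}. Your unfolding of $\mc{A}\forces\neg\neg\psi(\ol{a})$ as density of forcing extensions below $\mc{A}$, and your care that the amalgam is taken over $\mc{A}$ so persistence applies, are both accurate.
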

	
	\begin{proof}
		First, we will show that (1) implies (2).
		If $\mc{A}\forces\neg\neg\psi(\ol{a})$, then $\mc{A}\not\forces\neg\psi(\ol{a})$, so there is a $\mc{B}\succeq\mc{A}$ such that $\mc{B}\forces\psi(\ol{a})$.
		That (2) implies (3) follows from Lemma \ref{lemma:no-choices}.
		
		Now, we will show that (3) implies (4).
		If $\psi\in\mathbb{A}$, and $\mc{G}\succeq\mc{A}$ is $\mathbb{A}$-generic, then $\mc{G}\succeq\mc{A}$ decides $\psi(\ol{a})$. If (3) holds, $\mc{G}\forces\psi(\ol{a})$, so by Lemma \ref{lemma:forcing-is-truth-in-generics}, $\mc{G}\models\psi(\ol{a})$. 
		
		Next, we will show that (4) implies (1).
		Suppose that if $\psi\in\mathbb{A}$ and $\mc{G}\succeq\mc{A}$ is $\mathbb{A}$-generic, $\mc{G}\models\psi(\ol{a})$. By Lemma \ref{lemma:forcing-is-truth-in-generics}, $\mc{G}\forces\psi(\ol{a})$. Suppose $\mc{B}\succeq\mc{A}$. By Lemma \ref{lemma:decision}, there is some $\mc{C}\succeq\mc{B}$ that decides $\psi(\ol{a})$. By Lemma \ref{lemma:no-choices}, $\mc{C}\forces\psi(\ol{a})$. Consequently, for all $\mc{B}\succeq\mc{A}$, $\mc{B}\not\forces\neg\psi(\ol{a})$, so $\mc{A}\forces\neg\neg\psi(\ol{a})$.
	\end{proof}
	
	From the second characterization of the weak forcing relation, we have the following.
	
	\begin{corollary}\label{corollary:strong-implies-weak}
		If $\mc{A}\forces \psi(\ol{a})$, then $\mc{A}\forces^*\psi(\ol{a})$.
	\end{corollary}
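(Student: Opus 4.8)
The plan is to read this off directly from Lemma \ref{lemma:equivalent-characterizations}, which supplies exactly the bridge between the two forcing relations. The corollary asks us to pass from strong forcing $\mc{A}\forces\psi(\ol{a})$ to weak forcing $\mc{A}\forces^*\psi(\ol{a})$, and condition (2) of that lemma characterizes weak forcing $\mc{A}\forces^*\psi(\ol{a})$ as the assertion that $\mc{B}\forces\psi(\ol{a})$ holds for \emph{some} $\mc{B}\succeq\mc{A}$. So the entire content is to produce such a witnessing $\mc{B}$.

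First I would observe that the elementary-extension relation $\preceq$ is reflexive, so $\mc{A}\succeq\mc{A}$. Thus, assuming the hypothesis $\mc{A}\forces\psi(\ol{a})$, we may simply take $\mc{B}=\mc{A}$: this is an elementary extension of $\mc{A}$ that strongly forces $\psi(\ol{a})$, which is precisely condition (2) of Lemma \ref{lemma:equivalent-characterizations}. Invoking the equivalence of (1) and (2) in that lemma then yields condition (1), namely $\mc{A}\forces^*\psi(\ol{a})$, as desired.

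There is no real obstacle here; all the genuine work — in particular the use of elementary amalgamation via Lemma \ref{lemma:no-choices} to show that the various candidate definitions of weak forcing coincide — has already been absorbed into the proof of Lemma \ref{lemma:equivalent-characterizations}. The corollary is merely the specialization of that equivalence to the trivial witness $\mc{B}=\mc{A}$, and so it follows immediately once the lemma is in hand.
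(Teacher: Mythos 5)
Your proof is correct and is essentially the paper's own argument: the corollary is stated there as an immediate consequence of the second characterization in Lemma \ref{lemma:equivalent-characterizations}, with the reflexive witness $\mc{B}=\mc{A}$ supplying the required elementary extension with $\mc{B}\forces\psi(\ol{a})$. Nothing further is needed.
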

	
	We can now establish some useful properties enjoyed by the weak forcing relation.
	
	\begin{lemma}\label{lemma:transfer}
		If $\mc{A}\preceq\mc{B}$, then for any $\psi\in\mc{L}_{\infty,\omega}$, and $\ol{a}\in\mc{A}$, $\mc{A}\forces^*\psi(\ol{a})$ if and only if $\mc{B}\forces^*\psi(\ol{a})$.
	\end{lemma}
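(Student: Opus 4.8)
The plan is to reduce everything to the second characterization of weak forcing supplied by Lemma \ref{lemma:equivalent-characterizations}: that $\mc{A}\forces^*\psi(\ol{a})$ holds if and only if some elementary extension $\mc{C}\succeq\mc{A}$ strongly forces $\psi(\ol{a})$. With this in hand the statement becomes a question purely about strong forcing and the poset of elementary extensions, and no induction on the complexity of $\psi$ is needed.

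The backward direction should be completely immediate. If $\mc{B}\forces^*\psi(\ol{a})$, then by characterization (2) of Lemma \ref{lemma:equivalent-characterizations} there is some $\mc{C}\succeq\mc{B}$ with $\mc{C}\forces\psi(\ol{a})$. Since $\mc{A}\preceq\mc{B}\preceq\mc{C}$ gives $\mc{A}\preceq\mc{C}$, this very same $\mc{C}$ witnesses $\mc{A}\forces^*\psi(\ol{a})$ via the same characterization.

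The forward direction is where the one genuine ingredient enters, namely elementary amalgamation. Suppose $\mc{A}\forces^*\psi(\ol{a})$, and fix a witness $\mc{C}\succeq\mc{A}$ with $\mc{C}\forces\psi(\ol{a})$. To conclude $\mc{B}\forces^*\psi(\ol{a})$ I would produce an extension of $\mc{B}$ that strongly forces $\psi(\ol{a})$. Both $\mc{B}$ and $\mc{C}$ are elementary extensions of $\mc{A}$, so the elementary amalgamation theorem yields a single $\mc{D}$ with $\mc{B}\preceq\mc{D}$ and $\mc{C}\preceq\mc{D}$. Now Lemma \ref{lemma:persistence} (persistence of strong forcing under elementary extensions) applied to $\mc{C}\preceq\mc{D}$ gives $\mc{D}\forces\psi(\ol{a})$, and since $\mc{B}\preceq\mc{D}$, characterization (2) delivers $\mc{B}\forces^*\psi(\ol{a})$.

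I do not expect any real obstacle here. The substantive content---the equivalence of the ``for some extension'' and ``for every deciding extension'' forms of forcing---was already absorbed into Lemma \ref{lemma:equivalent-characterizations}, whose proof is where amalgamation and persistence were first combined. The present lemma simply repackages that machinery, and the only place the hypothesis $\mc{A}\preceq\mc{B}$ does any work is in the forward direction, to license amalgamating $\mc{B}$ with the witnessing extension $\mc{C}$ over their common elementary substructure $\mc{A}$.
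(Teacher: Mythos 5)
Your proof is correct, and it differs from the paper's in one small but genuine way. For the direction $\mc{B}\forces^*\psi(\ol{a})\Rightarrow\mc{A}\forces^*\psi(\ol{a})$ you and the paper argue identically: a strong-forcing witness $\mc{C}\succeq\mc{B}$ is also an extension of $\mc{A}$, so characterization (2) of Lemma \ref{lemma:equivalent-characterizations} applies. For the forward direction, however, the paper uses characterization (3) rather than (2): every $\mc{C}\succeq\mc{B}$ that decides $\psi(\ol{a})$ is also a deciding extension of $\mc{A}$, hence strongly forces $\psi(\ol{a})$, which immediately yields $\mc{B}\forces^*\psi(\ol{a})$ --- a one-line argument with no explicit appeal to amalgamation or persistence. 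You instead work only with characterization (2), fixing a witness $\mc{C}\succeq\mc{A}$, amalgamating $\mc{B}$ and $\mc{C}$ over $\mc{A}$ into some $\mc{D}$, and pushing the strong forcing up to $\mc{D}$ via Lemma \ref{lemma:persistence}. This is sound (note that amalgamation fixes $\mc{A}$ pointwise, so $\ol{a}$ is preserved, and there is no circularity since Lemmas \ref{lemma:persistence} and \ref{lemma:equivalent-characterizations} precede this one), but it re-derives in-line exactly the combination of amalgamation and persistence that the paper already packaged into Lemma \ref{lemma:no-choices} and thence into the equivalence of (2) and (3). Your version makes the role of elementary amalgamation visible at the point of use, which has some expository value; the paper's version is shorter because it exploits the ``every deciding extension'' formulation, letting the stored lemma do the work. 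Your closing remark that the hypothesis $\mc{A}\preceq\mc{B}$ only does work in the forward direction is slightly off: it is also what makes the backward direction's witness $\mc{C}\succeq\mc{B}$ count as an extension of $\mc{A}$.
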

	
	\begin{proof}
		Suppose that $\mc{A}\forces^*\psi(\ol{a})$. Then, if $\mc{C}\succeq\mc{B}$, and $\mc{C}$ decides $\psi(\ol{a})$, $\mc{C}\succeq\mc{A}$, so $\mc{C}\forces\psi(\ol{a})$. We conclude that $\mc{B}\forces^* \psi(\ol{a})$. Suppose now that $\mc{B}\forces^*\psi(\ol{a})$. Then, for some $\mc{C}\succeq\mc{B}$, $\mc{C}\forces \psi(\ol{a})$. $\mc{C}\succeq\mc{A}$, so $\mc{A}\forces^*\psi(\ol{a})$.
	\end{proof}
	
	As a consequence of elementary amalgamation, we get the following lemma which is an important and special property of this forcing notion.
	
	\begin{lemma}\label{lemma:completeness}
		For any $\mc{A}$, $\ol{a}\in\mc{A}$, and formula $\psi$, either $\mc{A}\forces^*\psi(\ol{a})$ or $\mc{A}\forces^*\neg\psi(\ol{a})$. 
	\end{lemma}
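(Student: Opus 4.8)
The plan is to derive completeness directly from the second characterization of weak forcing in Lemma \ref{lemma:equivalent-characterizations} together with the defining clause for strong forcing of a negation; essentially all of the substantive work has already been done in the preceding lemmas, so this should be a short argument rather than an induction on the complexity of $\psi$.

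First I would assume $\mc{A} \not\forces^* \psi(\ol{a})$ and aim to conclude $\mc{A} \forces^* \neg\psi(\ol{a})$. By the equivalence of conditions (1) and (2) in Lemma \ref{lemma:equivalent-characterizations}, $\mc{A} \forces^* \psi(\ol{a})$ holds if and only if some $\mc{B} \succeq \mc{A}$ strongly forces $\psi(\ol{a})$. Hence the assumption $\mc{A} \not\forces^* \psi(\ol{a})$ is exactly the statement that no elementary extension of $\mc{A}$ strongly forces $\psi(\ol{a})$; that is, for every $\mc{B} \succeq \mc{A}$ we have $\mc{B} \not\forces \psi(\ol{a})$.

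Next I would observe that this is precisely clause (2) of the definition of the strong forcing relation, which declares $\mc{A} \forces \neg\psi(\ol{a})$ to hold exactly when $\mc{B} \not\forces \psi(\ol{a})$ for every $\mc{B} \succeq \mc{A}$. Thus $\mc{A} \forces \neg\psi(\ol{a})$, and by Corollary \ref{corollary:strong-implies-weak} this upgrades to $\mc{A} \forces^* \neg\psi(\ol{a})$, which completes the argument.

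I do not expect a genuine obstacle here: the real content is the equivalence of the various forms of the weak forcing relation, which rests on elementary amalgamation through Lemmas \ref{lemma:no-choices} and \ref{lemma:equivalent-characterizations} and is already available. The only points to keep straight are that the lemma asserts an \emph{inclusive} disjunction, so it suffices to exhibit one of the two forcing facts and we need not rule out that both hold, and that the passage from ``$\mc{B}\not\forces\psi(\ol{a})$ for all $\mc{B}\succeq\mc{A}$'' to $\mc{A}\forces\neg\psi(\ol{a})$ is literally the definitional clause for strong negation rather than anything requiring further justification.
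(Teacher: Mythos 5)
Your proof is correct and takes essentially the same route as the paper: the paper invokes Lemma \ref{lemma:decision} to obtain an extension deciding $\psi(\ol{a})$ and then passes to weak forcing via characterization (2) of Lemma \ref{lemma:equivalent-characterizations}, whereas you merely inline the proof of Lemma \ref{lemma:decision} (the negation clause of strong forcing applied contrapositively) and upgrade via Corollary \ref{corollary:strong-implies-weak}. There is no gap, and no circularity, since both cited results precede the lemma and are proved independently of it.
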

	
	\begin{proof}
		By Lemma \ref{lemma:decision}, there is some $\mc{B}\succeq\mc{A}$ such that either $\mc{B}\forces\psi(\ol{a})$, or $\mc{B}\forces\neg\psi(\ol{a})$. In the first case, $\mc{A}\forces^*\psi(\ol{a})$, and in the second, $\mc{A}\forces^*\neg\psi(\ol{a})$
	\end{proof}
	
	This implies that $\mc{A}\forces^*\neg\phi(\ol{a})$ if and only if $\mc{A}\not\forces^*\phi(\ol{a})$. Moreover, using Lemma \ref{lemma:transfer} and elementary amalgamation:
	
	\begin{theorem}\label{rem:weak-forcing-new-defn}
		The weak forcing $\forces^*$ is defined by the following recursive conditions:
		\begin{enumerate}
			\item If $\psi$ is atomic, $\mc{A}\forces^* \psi(\ol{a})$ if and only if $\mc{A}\models \psi(\ol{a})$.
			\item[($2'$)] If $\psi(\ol{x}) = \neg\phi(\ol{x})$, $\mc{A}\forces^* \psi(\ol{a})$ if and only $\mc{A}\not\forces^* \phi(\ol{a})$.
			\item[($3'$)] If $\psi(\ol{x}) = \bigdoublevee\limits_{\phi\in \Phi}\phi(\ol{x})$, $\mc{A}\forces^*\psi(\ol{a})$ if and only if for some $\phi\in\Phi$, $\mc{A}\forces^*\phi(\ol{a})$.
			\item[($4$)] If $\psi(\ol{x}) = \bigdoublewedge\limits_{\phi \in\Phi}\phi(\ol{x})$, $\mc{A}\forces^*\psi(\ol{a})$ if and only if for every $\phi\in \Phi$, $\mc{A}\forces^*\phi(\ol{a})$.
			\item[($5'$)] If $\psi(\ol{x}) = \exists\ol{y}\phi(\ol{xy})$, $\mc{A}\forces^*\psi(\ol{a})$ if and only if there is $\mc{B} \succeq \mc{A}$ and $\ol{b}\in \mc{B}$ such that $\mc{B}\forces^*\phi(\ol{ab})$.
			\item[($6$)] If $\psi(\ol{x}) = \forall\ol{y}\phi(\ol{xy})$, $\mc{A}\forces^*\psi(\ol{a})$ if and only if for every $\mc{B} \succeq \mc{A}$ and $\ol{b}\in \mc{B}$, $\mc{B}\forces^* \phi(\ol{ab})$.
		\end{enumerate}
	\end{theorem}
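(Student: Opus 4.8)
The plan is to verify that the relation $\forces^*$ (defined as $\mc{A}\forces\neg\neg$) satisfies each of the six listed conditions, comparing them against the standard clauses (1)--(6) stated just before the theorem. Clauses (1), (4) and (6) are literally among the standard clauses, so there is nothing to prove for them. Clause ($2'$) is immediate from Lemma~\ref{lemma:completeness}: that lemma supplies that at least one of $\mc{A}\forces^*\phi(\ol{a})$, $\mc{A}\forces^*\neg\phi(\ol{a})$ holds, while the standard clause (2), instantiated at $\mc{B}=\mc{A}$, shows at most one holds (if $\mc{A}\forces^*\neg\phi(\ol{a})$ then $\mc{A}\not\forces^*\phi(\ol{a})$). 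Together these give $\mc{A}\forces^*\neg\phi(\ol{a})$ iff $\mc{A}\not\forces^*\phi(\ol{a})$. The genuine content is therefore concentrated in ($3'$) and ($5'$), each of which simplifies a ``dense below $\mc{A}$'' condition to a statement about a single extension of $\mc{A}$.

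For both of these I would route everything through characterization (2) of Lemma~\ref{lemma:equivalent-characterizations}, namely that $\mc{A}\forces^*\theta(\ol{a})$ iff $\mc{B}\forces\theta(\ol{a})$ for some $\mc{B}\succeq\mc{A}$, combined with the strong-forcing clauses for $\bigdoublevee$ and $\exists$. For ($3'$): $\mc{A}\forces^*\bigdoublevee_{\phi\in\Phi}\phi(\ol{a})$ holds iff some $\mc{B}\succeq\mc{A}$ has $\mc{B}\forces\bigdoublevee_{\phi\in\Phi}\phi(\ol{a})$, which by strong clause (3) happens iff some $\mc{B}\succeq\mc{A}$ and some $\phi\in\Phi$ satisfy $\mc{B}\forces\phi(\ol{a})$. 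Commuting the two existential quantifiers and applying characterization (2) once more, this is exactly ``$\mc{A}\forces^*\phi(\ol{a})$ for some $\phi\in\Phi$'', which is ($3'$).

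For ($5'$), the forward direction uses strong clause (5) together with Corollary~\ref{corollary:strong-implies-weak}: if $\mc{A}\forces^*\exists\ol{y}\phi(\ol{ay})$ then some $\mc{B}\succeq\mc{A}$ strong-forces it, hence $\mc{B}\forces\phi(\ol{ab})$ for some $\ol{b}\in\mc{B}$, so $\mc{B}\forces^*\phi(\ol{ab})$, giving ($5'$). The converse is the only place that genuinely requires amalgamation, and is the main obstacle: the witness $\ol{b}$ lives in an extension $\mc{B}\succeq\mc{A}$ rather than in $\mc{A}$, so one cannot read off a witness inside $\mc{A}$ directly. Given $\mc{B}\succeq\mc{A}$ and $\ol{b}\in\mc{B}$ with $\mc{B}\forces^*\phi(\ol{ab})$, characterization (2) yields $\mc{C}\succeq\mc{B}$ with $\mc{C}\forces\phi(\ol{ab})$; since $\ol{b}\in\mc{C}$, strong clause (5) gives $\mc{C}\forces\exists\ol{y}\phi(\ol{ay})$, and as $\mc{C}\succeq\mc{A}$ a final appeal to characterization (2) delivers $\mc{A}\forces^*\exists\ol{y}\phi(\ol{ay})$.

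Equivalently, and matching the hint preceding the statement, one can argue directly against the standard clauses: the implications (3)$\Rightarrow$($3'$) and (5)$\Rightarrow$($5'$) are trivial by instantiating the universal ``for all $\mc{B}\succeq\mc{A}$'' at $\mc{B}=\mc{A}$, while for the reverse implications one takes a witnessing extension $\mc{B}$, amalgamates it over $\mc{A}$ with an arbitrary $\mc{B}'\succeq\mc{A}$ into a common $\mc{D}$, and uses Lemma~\ref{lemma:transfer} to push the forced formula up to $\mc{D}$, thereby recovering the density condition of the standard clause. The one subtlety to watch throughout is precisely that existential witnesses need not belong to $\mc{A}$ itself, which is exactly the gap that elementary amalgamation is there to repair.
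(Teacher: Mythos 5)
Your proposal is correct and takes essentially the same route as the paper: the paper likewise takes clauses (1), (4), (6) directly from the standard weak-forcing clauses, derives ($2'$) from Lemma \ref{lemma:completeness}, and proves both ($3'$) and ($5'$) by passing through characterization (2) of Lemma \ref{lemma:equivalent-characterizations} (some $\mc{B} \succeq \mc{A}$ strong-forces the formula) combined with the strong-forcing clauses, including the identical $\mc{C} \succeq \mc{B} \succeq \mc{A}$ step in the converse of ($5'$). The alternative amalgamation-plus-Lemma-\ref{lemma:transfer} sketch in your final paragraph is a workable variant, but your main argument coincides with the paper's proof.
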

	
	\begin{proof}
		As noted previously, $\forces^*$ satisfies clauses (1), (4) and (6). That $\forces^*$ satisfies ($2'$) follows immediately from Lemma \ref{lemma:completeness}.
		
		For ($3'$), suppose $\psi = \bigdoublevee\limits_{\phi\in\Phi}\phi$, and that $\mc{A}\forces^*\psi(\ol{a})$. Then, there is a $\mc{B}\succeq\mc{A}$ such that $\mc{B}\forces \psi(\ol{a})$. Consequently, $\mc{B}\forces \phi(\ol{a})$ for some $\phi\in\Phi$, so $\mc{A}\forces^*\phi(\ol{a})$. Suppose conversely that $A\forces^* \phi(\ol{a})$ for some $\phi\in\Phi$. Then, for some $\mc{B}\succeq\mc{A}$, $\mc{B}\forces\phi(\ol{a})$, so $\mc{B}\forces\psi(\ol{a})$. We conclude that $\mc{A}\forces^*\psi(\ol{a})$.
		
		For ($5'$), suppose $\psi(\ol{x}) = \exists\ol{y}\phi(\ol{xy})$, and that $\mc{A}\forces^*\psi(\ol{a})$. Then, there is a $\mc{B}\succeq\mc{A}$ such that $\mc{B}\forces\psi(\ol{a})$, so there is a $\ol{b}\in\mc{B}$ such that $\mc{B}\forces\phi(\ol{ab})$, in which case $\mc{B}\forces^*\phi(\ol{ab})$. Conversely, if for some $\mc{B}\succeq\mc{A}$, $\ol{b}\in\mc{B}$, $\mc{B}\forces^*\phi(\ol{ab})$, then for some $\mc{C}\succeq\mc{B}$, $\mc{C}\forces \phi(\ol{ab})$. In this case, $\mc{C}\forces\psi(\ol{a})$ so $\mc{A}\forces^*\psi(\ol{a})$.
	\end{proof}
	
	Note that ($1$), ($2'$), ($3'$), and ($4$) are the same as the conditions for the satisfaction relation; it is only ($5'$) and ($6$) that differ. Only the quantifiers, and not the infinitary connectives, are treated differently; this is why we use the $\exists_n / \forall_n$ hierarchy rather than the $\Sigma_n/\Pi_n$ hierarchy. Moreover, we obtain the following.
	
	\begin{corollary}\label{corollary:quantifier-free-weak-forcing-is-truth}
		If $\psi$ is quantifier-free, $\mc{A}\forces^*\psi(\ol{a})$ if and only if $\mc{A}\models\psi(\ol{a})$.
	\end{corollary}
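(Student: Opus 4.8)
The plan is to proceed by a straightforward induction on the complexity of the quantifier-free formula $\psi$, using the recursive characterization of $\forces^*$ established in Theorem \ref{rem:weak-forcing-new-defn}. The crucial point, already flagged in the remark immediately preceding the statement, is that clauses (1), ($2'$), ($3'$), and ($4$) of that characterization are word-for-word the defining clauses of the ordinary satisfaction relation. Since a quantifier-free formula is built from atomic formulas using only negation and (infinitary) conjunctions and disjunctions, these are the only clauses that can ever be invoked, so $\forces^*$ and $\models$ must coincide.

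First I would dispose of the base case: if $\psi$ is atomic, then clause (1) gives $\mc{A}\forces^*\psi(\ol{a})$ if and only if $\mc{A}\models\psi(\ol{a})$ directly. For the inductive step, I would first note that every subformula of a quantifier-free formula is again quantifier-free, so the induction hypothesis applies to all immediate subformulas. If $\psi = \neg\phi$, then ($2'$) gives $\mc{A}\forces^*\psi(\ol{a})$ iff $\mc{A}\not\forces^*\phi(\ol{a})$, which by the induction hypothesis holds iff $\mc{A}\not\models\phi(\ol{a})$, that is, iff $\mc{A}\models\psi(\ol{a})$. The cases $\psi = \bigdoublevee_{\phi\in\Phi}\phi$ and $\psi = \bigdoublewedge_{\phi\in\Phi}\phi$ are handled identically using ($3'$) and ($4$): in each case one matches the forcing clause against the corresponding satisfaction clause and applies the induction hypothesis to each $\phi\in\Phi$.

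There is essentially no obstacle here. The entire content of the corollary is the observation that weak forcing departs from satisfaction only at the quantifier clauses ($5'$) and ($6$), which a quantifier-free formula never reaches. The result is therefore immediate from Theorem \ref{rem:weak-forcing-new-defn}, and the proof amounts to little more than recording this induction.
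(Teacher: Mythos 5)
Your proof is correct and is essentially the paper's own argument: the corollary is presented there as an immediate consequence of Theorem \ref{rem:weak-forcing-new-defn}, whose clauses ($1$), ($2'$), ($3'$), and ($4$) coincide with the satisfaction clauses, so for quantifier-free formulas the quantifier clauses ($5'$) and ($6$) are never invoked. The induction you record is exactly the (unstated) proof the paper intends, with the base and inductive cases handled just as the paper's remark suggests.
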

	
	The following lemma shows that the weak forcing relation respects entailment and equivalence of formulas.
	
	\begin{lemma}\label{lemma:monotonicity}
		If $\psi_1\models \psi_2$ and $\mc{A}\forces^*\psi_1(\ol{a})$, then $\mc{A}\forces^*\psi_2(\ol{a})$.
	\end{lemma}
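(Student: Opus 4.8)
The plan is to route the entailment through the fourth characterization of weak forcing in Lemma~\ref{lemma:equivalent-characterizations}, which relates $\forces^*$ to ordinary truth in generic extensions. Semantic entailment $\psi_1 \models \psi_2$ is a statement about the satisfaction relation in arbitrary structures, not about forcing, so the natural strategy is to pass from forcing to genuine truth in a well-chosen structure, apply the entailment there, and pass back to forcing.

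First I would fix a fragment $\mathbb{A}$ of $\mc{L}_{\infty,\omega}$ containing both $\psi_1$ and $\psi_2$. Such a fragment exists: close $\{\psi_1,\psi_2\}$ under subformulas and negations, or equivalently take the union of the fragment generated by $\psi_1$ and the fragment generated by $\psi_2$, which is again closed under negation and subformula and hence a fragment. I would then invoke the generic-extension lemma to obtain an $\mathbb{A}$-generic $\mc{G}\succeq\mc{A}$.

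The core of the argument is then immediate. Since $\mc{A}\forces^*\psi_1(\ol{a})$ and $\psi_1\in\mathbb{A}$, the implication $(1)\Rightarrow(4)$ of Lemma~\ref{lemma:equivalent-characterizations} gives $\mc{G}\models\psi_1(\ol{a})$ for every $\mathbb{A}$-generic $\mc{G}\succeq\mc{A}$. Because $\psi_1\models\psi_2$ is semantic entailment over all structures, it applies to $\mc{G}$ with the tuple $\ol{a}$, yielding $\mc{G}\models\psi_2(\ol{a})$. As this holds for every $\mathbb{A}$-generic extension of $\mc{A}$ and $\psi_2\in\mathbb{A}$, the implication $(4)\Rightarrow(1)$ of the same lemma returns $\mc{A}\forces^*\psi_2(\ol{a})$, as desired.

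There is no serious obstacle here; the two points requiring care are ensuring that a single fragment $\mathbb{A}$ contains both $\psi_1$ and $\psi_2$, so that the genericity characterization is simultaneously available for each, and observing that $\psi_1\models\psi_2$, being purely semantic, transfers verbatim to the auxiliary model $\mc{G}$ without any need for compatibility with the forcing relation. In particular I would avoid an induction on the structure of $\psi_2$: entailment need not respect subformula structure, and the generic model lets us sidestep this difficulty entirely.
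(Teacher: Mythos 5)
Your proof is correct and is essentially the paper's argument: both pass through an $\mathbb{A}$-generic elementary extension $\mc{G}\succeq\mc{A}$, where weak forcing coincides with truth (Lemma~\ref{lemma:equivalent-characterizations}), and apply the semantic entailment there. The only difference is presentational---the paper argues contrapositively, using Lemma~\ref{lemma:completeness} to get $\mc{A}\forces^*\neg\psi_2(\ol{a})$ and then exhibiting $\mc{G}$ as a countermodel to $\psi_1\models\psi_2$, while you run the same transfer directly; note only that your appeal to $(4)\Rightarrow(1)$ is legitimate because a single fragment containing $\psi_1$, $\psi_2$ (and hence $\neg\psi_2$) suffices, exactly as in the paper's proof of that implication.
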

	
	\begin{proof}
		Suppose that $\mc{A}\forces^*\psi_1(\ol{a})$ and $\mc{A}\not\forces^*\psi_2(\ol{a})$. By Lemma \ref{lemma:completeness}, $\mc{A}\forces^*\neg\psi_2(\ol{a})$. Let $\mathbb{A}$ be a fragment containing $\psi_1$ and $\neg\psi_2$, and let $\mc{G}\succeq\mc{A}$ be $\mathbb{A}$-generic. Then, $\mc{G}\models \psi_1(\ol{a})$ and $\mc{G}\models\neg\psi_2(\ol{a})$, so $\psi_1\nmodels\psi_2$.
	\end{proof}
	
	We can also define generic structures in terms of the weak forcing relation.
	
	\begin{lemma}
		$\mc{G}$ is $\mathbb{A}$-generic if and only if for $\psi(\ol{x})=\exists\ol{y}\phi(\ol{xy})\in\mathbb{A}$, $\ol{a}\in \mc{G}$, if $\mc{G}\forces^* \psi(\ol{a})$, there is a $\ol{b}\in\mc{G}$ such that $\mc{G}\forces^* \phi(\ol{ab})$.
	\end{lemma}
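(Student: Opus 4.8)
The plan is to prove the two directions separately, with the backward direction resting on a short induction.

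For the forward direction, I would suppose $\mc{G}$ is $\mathbb{A}$-generic and that $\psi(\ol{x}) = \exists\ol{y}\phi(\ol{xy}) \in \mathbb{A}$ with $\mc{G}\forces^*\psi(\ol{a})$. Since $\mc{G}$ is generic it decides $\psi(\ol{a})$, so taking $\mc{B} = \mc{G}$ in characterization (3) of Lemma \ref{lemma:equivalent-characterizations} converts $\mc{G}\forces^*\psi(\ol{a})$ into $\mc{G}\forces\psi(\ol{a})$. By clause (5) of the strong forcing definition, $\mc{G}\forces\exists\ol{y}\phi(\ol{ay})$ yields some $\ol{b}\in\mc{G}$ with $\mc{G}\forces\phi(\ol{ab})$, and Corollary \ref{corollary:strong-implies-weak} upgrades this to $\mc{G}\forces^*\phi(\ol{ab})$, the desired witness.

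For the backward direction, I would assume the witness property and prove by induction on the complexity of $\psi\in\mathbb{A}$ the claim that $\mc{G}\forces^*\psi(\ol{a})$ implies $\mc{G}\forces\psi(\ol{a})$. The atomic case is immediate from the base clauses. For $\psi = \neg\phi$, note that $\mc{G}\forces^*\neg\phi(\ol{a})$ means $\mc{G}\not\forces^*\phi(\ol{a})$ by clause ($2'$) of Theorem \ref{rem:weak-forcing-new-defn}; were some $\mc{B}\succeq\mc{G}$ to force $\phi(\ol{a})$, characterization (2) of Lemma \ref{lemma:equivalent-characterizations} would give $\mc{G}\forces^*\phi(\ol{a})$, a contradiction, so $\mc{G}\forces\neg\phi(\ol{a})$. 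The disjunction case peels off a disjunct via clause ($3'$) and applies the induction hypothesis; the conjunction case applies the induction hypothesis to each conjunct and then Lemma \ref{lemma:persistence} to meet clause (4) of the strong definition with $\mc{C} = \mc{B}$. The universal case is automatic: unwinding clause (6) of weak forcing through characterization (2) reproduces verbatim clause (6) of strong forcing, so $\mc{G}\forces^*\forall\ol{y}\phi(\ol{ay})$ and $\mc{G}\forces\forall\ol{y}\phi(\ol{ay})$ coincide. The existential case is the only place the hypothesis enters: $\mc{G}\forces^*\exists\ol{y}\phi(\ol{ay})$ gives, by assumption, some $\ol{b}\in\mc{G}$ with $\mc{G}\forces^*\phi(\ol{ab})$, the induction hypothesis yields $\mc{G}\forces\phi(\ol{ab})$, and clause (5) gives $\mc{G}\forces\exists\ol{y}\phi(\ol{ay})$. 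With the claim established, genericity follows: for any $\psi\in\mathbb{A}$ and $\ol{a}\in\mc{G}$, Lemma \ref{lemma:completeness} gives $\mc{G}\forces^*\psi(\ol{a})$ or $\mc{G}\forces^*\neg\psi(\ol{a})$, and since $\mathbb{A}$ is closed under negation the claim converts whichever holds into $\mc{G}\forces\psi(\ol{a})$ or $\mc{G}\forces\neg\psi(\ol{a})$, so $\mc{G}$ decides $\psi(\ol{a})$.

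The main conceptual point, and the only real subtlety, is recognizing that a hypothesis concerning existential formulas alone suffices to recover deciding for every formula. This works precisely because each infinitary connective and the universal quantifier transfer between weak and strong forcing for free (via Lemma \ref{lemma:equivalent-characterizations}(2), Lemma \ref{lemma:persistence}, and the recursive clauses of Theorem \ref{rem:weak-forcing-new-defn}), leaving the existential witness condition as the single genuine constraint. I expect no computational obstacle; the only care needed is in checking that the fragment closure properties (closure under subformulas and under negation) let both the induction and the final conversion go through.
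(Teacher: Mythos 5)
Your proposal is correct and follows essentially the same route as the paper: the backward direction is the same induction on complexity, with the witness hypothesis entering only at the existential case and Lemma \ref{lemma:completeness} (plus closure of $\mathbb{A}$ under negation) finishing the genericity claim. The only divergences are harmless local shortcuts: in the forward direction you stay entirely within the forcing relations via Lemma \ref{lemma:equivalent-characterizations}(3) and clause (5) of the strong forcing, where the paper detours through satisfaction using Lemma \ref{lemma:forcing-is-truth-in-generics}, and in the universal case you correctly observe that substituting Lemma \ref{lemma:equivalent-characterizations}(2) into weak clause (6) reproduces strong clause (6) verbatim on any structure, where the paper instead passes through a deciding extension via Lemma \ref{lemma:decision}.
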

	
	\begin{proof}
		Suppose $\mc{G}$ is $\mathbb{A}$-generic. Let $\psi(\ol{x})=\exists\ol{y}\phi(\ol{xy})\in \mathbb{A}$, and $\ol{a}\in\mc{G}$, If $\mc{G}\forces^*\psi(\ol{a})$, then $\mc{G}\models\psi(\ol{a})$, so for some $\ol{b}\in\mc{G}$, $\mc{G}\models\phi(\ol{ab})$. By Lemma \ref{lemma:forcing-is-truth-in-generics}, $\mc{G}\forces\phi(\ol{ab})$, so $\mc{G}\forces^*\phi(\ol{ab})$. 
		
		Conversely, suppose that for any $\psi(\ol{x}) = \exists\ol{y}\phi(\ol{xy})\in\mathbb{A}$, $\ol{a}\in\mc{G}$, if $\mc{G}\forces^*\psi(\ol{a})$, then for some $\ol{b}\in\mc{G}$, $\mc{G}\forces^*\phi(\ol{ab})$. We will show that $\mc{G}$ is $\mathbb{A}$-generic. By Lemma \ref{lemma:completeness}, it suffices to show that if $\psi\in\mathbb{A}$, and $\mc{G}\forces^* \psi(\ol{a})$, then $\mc{G}\forces \psi(\ol{a})$. We will prove this by induction on the complexity of $\psi$.
		
		If $\psi$ is atomic, then $\mc{A}\forces^* \psi(\ol{a})$ if and only $\mc{A}\forces\psi(\ol{a})$ if and only if $\mc{A}\models \psi(\ol{a})$. If $\psi = \neg \phi$, and $\mc{G}\forces^*\psi(\ol{a})$, then for every $\mc{B}\succeq\mc{G}$, $\mc{B}\not\forces^*\phi(\ol{a})$. By Corollary \ref{corollary:strong-implies-weak}, for every such $\mc{B}$, $\mc{B}\not\forces \phi(\ol{a})$, so $\mc{G}\forces \psi(\ol{a})$. If $\psi = \bigdoublevee\limits_{\phi\in\Phi}\phi$, and $\mc{G}\forces^* \psi(\ol{a})$, then $\mc{G}\forces^*\phi(\ol{a})$ for some $\phi\in\Phi$. Appealing to induction, $\mc{G}\forces\phi(\ol{a})$, so $\mc{G}\forces\psi(\ol{a})$. Likewise, if $\psi = \bigdoublewedge\limits_{\phi\in\Phi}\phi$ and $\mc{G}\forces^*\psi(\ol{a})$, then $\mc{G}\forces^*\phi(\ol{a})$ for every $\phi\in\Phi$, so $\mc{G}\forces\phi(\ol{a})$ for every $\phi\in \Phi$, which implies that $\mc{G}\forces \psi(\ol{a})$.
		
		Suppose that $\psi(\ol{x}) = \exists\ol{y}\phi(\ol{xy})$. If $\mc{G}\forces^*\psi(\ol{a})$, then for some $\ol{b}\in\mc{G}$, $\mc{G}\forces^*\phi(\ol{ab})$. Appealing to induction, $\mc{G}\forces \phi(\ol{ab})$, so $\mc{G}\forces \psi(\ol{ab})$. Suppose $\psi(\ol{x}) = \forall\ol{y}\phi(\ol{xy})$, and $\mc{G}\forces^*\psi(\ol{a})$. Then, for any $\mc{B}\succeq\mc{G}$, $\ol{b}\in\mc{B}$, $\mc{B}\forces^*\phi(\ol{ab})$. Let $\mc{C}\succeq\mc{B}$ decide $\phi(\ol{ab})$. Then, $\mc{C}\forces\phi(\ol{ab})$. We conclude that $\mc{G}\forces\psi(\ol{a})$.
	\end{proof}
	
	The following lemma shows that the weak forcing relation depends only on finitary first order properties, namely whether $\mc{A}$ forces that $\phi$ is true of $\ol{a}$ depends only on the type of $\ol{a}$ in $\mc{A}$.
	
	\begin{lemma}\label{Invariance}
		Suppose $(\mc{A},\ol{a})\equiv(\mc{B},\ol{b})$, then, for any formula $\psi$, $\mc{A}\forces^*\psi(\ol{a})$ if and only if $\mc{B}\forces^*\psi(\ol{b})$.
	\end{lemma}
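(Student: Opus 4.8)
The plan is to reduce everything to Lemma~\ref{lemma:transfer} (weak forcing is invariant under elementary extensions) rather than to run an induction on the complexity of $\psi$. An induction would be awkward here, because clauses ($5'$) and ($6$) of the weak forcing relation quantify over \emph{all} elementary extensions, and there is no obvious way to match up the extensions of $\mc{A}$ with those of $\mc{B}$ using only the hypothesis that $\ol{a}$ and $\ol{b}$ have the same first-order type. Instead I would place $(\mc{A},\ol{a})$ and $(\mc{B},\ol{b})$ inside a single structure in which $\ol{a}$ and $\ol{b}$ have been amalgamated, and then appeal to the transfer lemma twice.

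First I would expand the signature by a tuple of new constants $\ol{c}$ and regard $(\mc{A},\ol{a})$ and $(\mc{B},\ol{b})$ as structures interpreting $\ol{c}$ as $\ol{a}$ and $\ol{b}$ respectively. The hypothesis $(\mc{A},\ol{a})\equiv(\mc{B},\ol{b})$ says precisely that these two expansions are elementarily equivalent, so the elementary amalgamation theorem (the same tool invoked in Lemma~\ref{lemma:no-choices}) yields a common elementary extension of both expansions on which the interpretations of $\ol{c}$ agree. Taking the reduct back to $\tau$, this produces a structure $\mc{D}$ together with elementary embeddings $f\colon\mc{A}\to\mc{D}$ and $g\colon\mc{B}\to\mc{D}$ satisfying $f(\ol{a})=g(\ol{b})$.

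Next I would use that the strong, and hence the weak, forcing relation is invariant under isomorphism: it is defined recursively from atomic satisfaction together with clauses referring only to the partial order $\preceq$ of elementary extensions, all of which any isomorphism preserves. Consequently $\mc{A}\forces^*\psi(\ol{a})$ iff $f[\mc{A}]\forces^*\psi(f(\ol{a}))$, and $\mc{B}\forces^*\psi(\ol{b})$ iff $g[\mc{B}]\forces^*\psi(g(\ol{b}))$. Since $f[\mc{A}]\preceq\mc{D}$ and $g[\mc{B}]\preceq\mc{D}$, Lemma~\ref{lemma:transfer} converts each of these into a statement about $\mc{D}$, namely $\mc{A}\forces^*\psi(\ol{a})$ iff $\mc{D}\forces^*\psi(f(\ol{a}))$, and $\mc{B}\forces^*\psi(\ol{b})$ iff $\mc{D}\forces^*\psi(g(\ol{b}))$. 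As $f(\ol{a})=g(\ol{b})$, the two right-hand sides are literally the same statement, and the claimed equivalence follows.

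The only real work is in the amalgamation step: one must set it up in the expanded language so that the designated tuples are \emph{identified} (not merely mapped to elementarily equivalent tuples), since it is exactly the equality $f(\ol{a})=g(\ol{b})$ that lets the two applications of Lemma~\ref{lemma:transfer} collapse onto a single formula evaluated in $\mc{D}$. Everything else—the isomorphism-invariance of forcing and the invocation of the transfer lemma—is routine.
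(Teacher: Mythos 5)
Your proof is correct and takes essentially the same route as the paper's: apply the elementary amalgamation theorem to obtain a common elementary extension in which $\ol{a}$ and $\ol{b}$ are identified, then use Lemma~\ref{lemma:transfer} on both sides to collapse the two forcing statements onto one evaluated in the amalgam. The only cosmetic difference is that you make explicit the isomorphism-invariance of $\forces^*$, which the paper handles implicitly by identifying $\mc{A}$ and $\mc{B}$ with elementary substructures of the common extension.
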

	
	\begin{proof}
		If $(\mc{A},\ol{a})\equiv (\mc{B},\ol{b})$, the elementary amalgamation theorem implies that there is a structure $\mc{C}$ and elementary embeddings $f:\mc{A}\hookrightarrow\mc{C}$ and $g:\mc{B}\hookrightarrow\mc{C}$ such that $f(\ol{a}) = g(\ol{b})$. We can then identify $\mc{A}$ and $\mc{B}$ with elementary substructures of $\mc{C}$ so that $\ol{a} = \ol{b}$. Using Lemma \ref{lemma:transfer}, we have that $\mc{A}\forces^* \psi(\ol{a})$ if and only if $\mc{C}\forces^*\psi(\ol{a})$ if and only if $\mc{B}\forces^*\psi(\ol{b})$.
	\end{proof}
	
	\subsubsection{Definability}
	
	 Lemma \ref{Invariance} showed that the weak forcing relation depends only on finitary first order properties. The next lemma shows that, moreover, quantifier complexity is maintained. Recall that we said that a formula $\psi\in\mc{L}_{\infty,\omega}$ is elementary if it is of the form $\psi=\bigdoublevee_{\alpha}\bigdoublewedge_{\beta}\theta_{\alpha,\beta}$, for $\theta_{\alpha,\beta}$ finitary formulas.
	
	\begin{lemma}
		For each $\psi\in\mc{L}_{\infty,\omega}$, there is an elementary formula $\mathrm{Force}_\psi$ such that $\mc{A}\forces^*\psi(\ol{a})$ if and only if $\mc{A}\models \mathrm{Force}_\psi(\ol{a})$. Moreover, if $\psi$ is a $\forall_n$ (resp.\ $\exists_n$) formula, then $\mathrm{Force}_\psi(\ol{a})$ can be taken to be of the form
		\[ \bigdoublevee_\alpha \bigdoublewedge_\beta \theta_{\alpha,\beta}(\ol{x}) \]
		where each $\theta_{\alpha,\beta}$ is a finitary $\forall_n$ (resp. $\exists_n$) formula.
	\end{lemma}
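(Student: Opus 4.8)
The plan is to read the elementary formula $\mathrm{Force}_\psi$ off directly from the invariance of weak forcing, and then to refine the construction so as to control quantifier complexity. For the bare existence statement, recall that by Lemma \ref{Invariance} whether $\mc{A}\forces^*\psi(\ol{a})$ holds depends only on the complete finitary type $\tp^{\mc{A}}(\ol{a})$. Let $T_\psi$ be the set of complete types $p(\ol{x})$ that are realized by some $\ol{a}$ in some $\mc{A}$ with $\mc{A}\forces^*\psi(\ol{a})$; by invariance, then \emph{every} realization of such a $p$ forces $\psi$. Put $\mathrm{Force}_\psi(\ol{x})=\bigdoublevee_{p\in T_\psi}\bigdoublewedge_{\theta\in p}\theta(\ol{x})$. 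There are only set-many complete types in the finitely many variables $\ol{x}$, so this is a legitimate elementary $\mc{L}_{\infty,\omega}$ formula, and visibly $\mc{A}\models\mathrm{Force}_\psi(\ol{a})$ iff $\tp^{\mc{A}}(\ol{a})\in T_\psi$ iff $\mc{A}\forces^*\psi(\ol{a})$.

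To control complexity it suffices to prove the following monotonicity: if $\psi$ is $\forall_n$ (resp.\ $\exists_n$), $\mc{A}\forces^*\psi(\ol{a})$, and every finitary $\forall_n$ (resp.\ $\exists_n$) formula true of $\ol{a}$ in $\mc{A}$ is true of $\ol{b}$ in $\mc{B}$, then $\mc{B}\forces^*\psi(\ol{b})$. Granting this, I would replace $T_\psi$ by the collection of sets $\Sigma_{\ol{a}}=\{\theta\in\mc{L}_{\omega,\omega}: \theta\text{ is }\forall_n,\ \mc{A}\models\theta(\ol{a})\}$ as $(\mc{A},\ol{a})$ ranges over pairs with $\mc{A}\forces^*\psi(\ol{a})$ (only set-many such $\Sigma$ occur), and set $\mathrm{Force}_\psi=\bigdoublevee_{\Sigma}\bigdoublewedge_{\theta\in\Sigma}\theta$. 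Each inner conjunction is an infinitary $\forall_n$ formula, hence so is the whole disjunction, and monotonicity together with the trivial converse (take $\ol{b}=\ol{a}$) gives $\mc{A}\models\mathrm{Force}_\psi(\ol{a})$ iff $\mc{A}\forces^*\psi(\ol{a})$; the $\exists_n$ case is dual.

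The monotonicity statement I would prove by a single structural induction on $\psi$, handling the $\forall_n$ and $\exists_n$ versions simultaneously since they are mutually dual. The atomic, $\bigdoublevee$, and $\bigdoublewedge$ cases are immediate from the clauses of Theorem \ref{rem:weak-forcing-new-defn} and the induction hypothesis. The negation case $\psi=\neg\phi$ (so $\phi$ is $\exists_n$ when $\psi$ is $\forall_n$) uses the dual induction hypothesis for $\phi$ after rewriting the hypothesis ``$\forall_n$-type of $\ol{a}$ $\subseteq$ $\forall_n$-type of $\ol{b}$'' in its equivalent contrapositive form ``$\exists_n$-type of $\ol{b}$ $\subseteq$ $\exists_n$-type of $\ol{a}$''. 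The only substantial case is a quantifier, say $\psi=\forall\ol{y}\,\phi$: given $\mc{B}'\succeq\mc{B}$ and $\ol{b}'\in\mc{B}'$, clause ($6$) requires $\mc{B}'\forces^*\phi(\ol{b}\,\ol{b}')$, which I would obtain by manufacturing a matching witness $\ol{a}'$ on the $\mc{A}$ side (using that $\mc{A}\forces^*\psi(\ol{a})$ forces $\phi$ of $\ol{a}$ together with \emph{any} witness in \emph{any} elementary extension) and then applying the induction hypothesis to $\phi$.

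The crux is the amalgamation step producing that witness, and it is where all the $\exists_n/\forall_n$ bookkeeping lives. I would prove: if every finitary $\forall_n$ formula true of $\ol{a}$ in $\mc{A}$ is true of $\ol{b}$ in $\mc{B}$, then for every $\ol{b}'\in\mc{B}$ there exist $\mc{A}'\succeq\mc{A}$ and $\ol{a}'\in\mc{A}'$ such that every finitary $\exists_{n-1}$ formula true of $(\ol{a},\ol{a}')$ is true of $(\ol{b},\ol{b}')$ and every finitary $\exists_n$ formula true of $(\ol{b},\ol{b}')$ is true of $(\ol{a},\ol{a}')$. This follows by realizing, over the elementary diagram of $\mc{A}$, the finitary type $q(\ol{y})$ consisting of $\neg\chi(\ol{a},\ol{y})$ for each $\exists_{n-1}$ formula $\chi$ false of $(\ol{b},\ol{b}')$ together with $\neg\theta(\ol{a},\ol{y})$ for each $\forall_n$ formula $\theta$ false of $(\ol{b},\ol{b}')$; finite satisfiability reduces, using that $\exists_n$ and $\forall_n$ are closed under finite conjunction and that every $\forall_{n-1}$ formula is (equivalent to) an $\exists_n$ formula, to a single application of $\exists_n\text{-tp}(\ol{b})\subseteq\exists_n\text{-tp}(\ol{a})$ to the $\exists_n$ formula $\exists\ol{y}\,(\neg\theta\wedge\neg\chi)$ witnessed by $\ol{b}'$. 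The two forms of the conclusion then feed exactly the two sub-cases of the quantifier step (according to whether the matrix $\phi$ drops to $\exists_{n-1}/\forall_{n-1}$ or stays in $\forall_n/\exists_n$), and dually for $\psi=\exists\ol{y}\,\phi$ via clause ($5'$). I expect the main obstacle to be not any single hard idea but getting the directions of all these type-inclusions and the class-bookkeeping exactly right throughout the dual induction.
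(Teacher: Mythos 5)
Your proposal is correct, but it takes a genuinely different route from the paper's proof. The paper constructs $\mathrm{Force}_\psi$ by recursion on the structure of $\psi$, following the simplified clauses of Theorem \ref{rem:weak-forcing-new-defn}: negations and infinite conjunctions of elementary formulas are put back into elementary form by formally distributing (a disjunction over choice functions), the existential clause is handled by observing that $\mc{A}\forces^*\exists\ol{y}\phi(\ol{a}\ol{y})$ holds iff some partial type $\left\{\theta_{\alpha,\beta}(\ol{a}\ol{y})\mid\beta\right\}$ extracted from $\mathrm{Force}_\phi$ is finitely satisfiable in $\mc{A}$, which is expressed by $\bigdoublevee_\alpha\bigdoublewedge_{S \text{ finite}}\exists\ol{y}\bigwedge_{\beta\in S}\theta_{\alpha,\beta}$, and universal quantifiers reduce to $\neg\exists\neg$ via Lemma \ref{lemma:monotonicity}; quantifier complexity is checked clause by clause, and the preservation fact (Lemma \ref{lemma:n-extensions-forcing}) is then derived as a corollary of this definability. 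You run the logic in the opposite direction: you first prove, by a simultaneous structural induction over both classes, a preservation statement that is strictly stronger than Lemma \ref{lemma:n-extensions-forcing} (one-sided inclusion of finitary $\forall_n$/$\exists_n$ types between arbitrary pairs $(\mc{A},\ol{a})$ and $(\mc{B},\ol{b})$, rather than $\mc{A}\preceq_{n-1}\mc{B}$ with the same tuple), with all the model theory concentrated in a single compactness amalgamation lemma, and then read off $\mathrm{Force}_\psi$ as the disjunction of the $\forall_n$- (resp.\ $\exists_n$-) types of forcing tuples. This is essentially the ``hands-on'' alternative the paper sketches at the end of Section 4.1 via Lemmas \ref{lemma:amalgamation-1} and \ref{lemma:amalgamation-2}, upgraded so that it yields the definability lemma itself; your amalgamation step (realizing over the elementary diagram of $\mc{A}$ the negations of the $\exists_{n-1}$ and $\forall_n$ formulas false at $(\ol{b},\ol{b}')$, with finite satisfiability delivered by one $\exists_n$-transfer, using that a finite conjunction of $\exists_n$ formulas is $\exists_n$ and that $\forall_{n-1}$ formulas are equivalent to $\exists_n$ ones) checks out, including the edge case $n\le 1$ where $\exists_0=\forall_0$ is closed under negation. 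What the paper's recursion buys is explicit clause-by-clause syntactic control of $\mathrm{Force}_\psi$; what yours buys is avoiding the distributivity bookkeeping and a direct, self-contained proof of the preservation lemma that does not pass through definability. One cosmetic slip: you state the amalgamation lemma for $\ol{b}'\in\mc{B}$, but in the universal-quantifier step you need it for $\ol{b}'\in\mc{B}'$ with $\mc{B}'\succeq\mc{B}$; this is harmless, since the hypothesis $\forall_n\text{-}\tp(\ol{a})\subseteq\forall_n\text{-}\tp(\ol{b})$ is unchanged when $\mc{B}$ is replaced by $\mc{B}'$, so you simply apply the lemma to the pair $(\mc{A},\ol{a})$, $(\mc{B}',\ol{b})$.
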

	
	Without the last clause, the lemma follows quite simply from Lemma \ref{Invariance}. Consider the following set of types $\mc{T}$. Let $\mc{T} = \{ \tp^{\mc{A}}(\ol{a}) : \mc{A} \forces^* \psi(\ol{a})\}$ where $\mc{A}$ ranges over all structures. By Lemma \ref{Invariance}, $\mc{B} \forces^* \psi(\ol{a})$ if and only if, for some $p \in \mc{T}$, $\mc{B} \models p(\ol{a})$. Then let
	\[\mathrm{Force}_\psi(\ol{x}) = \bigdoublevee_{p(\ol{x}) \in \mc{T}} \bigdoublewedge_{\varphi \in p(\ol{x})} \varphi(\ol{x}).\]
	However, we need a more involved argument if we want $\mathrm{Force}_\psi$ to have the same quantifier complexity as $\psi$.
	
	\begin{proof}
		We will define $\mathrm{Force}_\psi$ by recursion (recalling Theorem \ref{rem:weak-forcing-new-defn} which gives simplified conditions for the weak forcing). At each step, we will ensure that $\mathrm{Force}_\psi$ is at most the complexity of $\psi$. If $\psi$ is atomic, let $\mathrm{Force}_\psi = \psi$. Suppose $\psi = \neg\phi$. By Lemma $\ref{lemma:completeness}$, $\mc{A}\forces^* \psi(\ol{a})$ if and only if $\mc{A}\not\forces^*\phi(\ol{a})$. Let $\mathrm{Force}_\phi = \bigdoublevee_\alpha \bigdoublewedge_\beta \theta_{\alpha,\beta}$. Then, $\mc{A}\forces^* \psi(\ol{a})$ if and only if $\mc{A}\models \sim \mathrm{Force}_\phi(\ol{a})$, the formal negation of $\mathrm{Force}_\phi$. 
		This is
		\[\sim \mathrm{Force}_\phi = \bigdoublewedge_\alpha\bigdoublevee_\beta \sim \theta_{\alpha,\beta}\]
		which is equivalent to
		\[\bigdoublevee_{f:\alpha\mapsto\beta}\bigdoublewedge_\alpha \sim\theta_{\alpha,f(\alpha)}\]
		We define $\mathrm{Force}_\psi$ to be this.
		
		Suppose $\psi = \bigdoublevee\limits_{\phi\in\Phi}\phi$. We can then define $\mathrm{Force}_\psi$ as $\bigdoublevee\limits_{\phi\in\Phi}\mathrm{Force}_\phi$. Suppose $\psi = \bigdoublewedge\limits_{\phi\in\Phi}\phi$. Then, $\mc{A}\forces^*\psi(\ol{a})$ if and only if for every $\phi\in\Phi$, $\mc{A}\models\mathrm{Force}_{\phi}(\ol{a})$. Let $\mathrm{Force}_\phi = \bigdoublevee_\alpha\bigdoublewedge_\beta \theta^\phi_{\alpha,\beta}$. Then, $\mc{A}\forces^*\psi(\ol{a})$ if and only if
		\[\mc{A}\models \bigdoublewedge_{\phi\in\Phi}\bigdoublevee_\alpha\bigdoublewedge_{\beta}\theta^\phi_{\alpha,\beta}(\ol{a})\]
		This formula is equivalent to
		\[\bigdoublevee_{f:\phi\mapsto\alpha}\bigdoublewedge_{\phi\in\Phi,\beta}\theta^{\phi}_{f(\phi),\beta}\]
		We define $\mathrm{Force}_\psi$ to be this.
		
		Suppose $\psi(\ol{x}) = \exists\ol{y}\phi(\ol{xy})$. Let $\mathrm{Force}_{\phi} = \bigdoublevee_\alpha\bigdoublewedge_\beta \theta_{\alpha,\beta}$. The following are equivalent.
		\begin{enumerate}
			\item $\mc{A}\forces^* \psi(\ol{a})$;
			\item For some $\mc{B}\succeq\mc{A}$ and $\ol{b}\in\mc{B}$, $\mc{B}\forces^*\phi(\ol{ab})$;
			\item For some $\mc{B}\succeq\mc{A}$ and $\ol{b}\in\mc{B}$, $\mc{B}\models \bigdoublevee_{\alpha}\bigdoublewedge_\beta \theta_{\alpha,\beta}(\ol{ab})$;
			\item For some $\mc{B}\succeq\mc{A}$, $\ol{b}\in\mc{B}$, and $\alpha$, $\mc{B}\models \theta_{\alpha,\beta}(\ol{ab})$ for each $\beta$;
			\item For some $\alpha$, the partial type $p_\alpha(\ol{y}) = \left\{\theta_{\alpha,\beta}(\ol{ay})\middle| \beta\right\}$ is finitely satisfiable in $\mc{A}$;
			\item \[\mc{A}\models \bigdoublevee\limits_\alpha \bigdoublewedge\limits_{S\textrm{  finite}} \exists\ol{y}\bigwedge\limits_{\beta\in S}\theta_{\alpha,\beta}(\ol{ay}).\]
		\end{enumerate}
		We define $\mathrm{Force}_\psi$ to be this formula. Suppose $\psi(\ol{x}) = \forall\ol{y}\phi(\ol{xy})$. By Lemma \ref{lemma:monotonicity}, $\mc{A}\forces^*\psi(\ol{a})$ if and only if $\mc{A}\forces^* \neg\exists\ol{y}\neg\phi(\ol{ay})$, so we can use the rules for existential quantifiers and negations to construct $\mathrm{Force}_\psi = \mathrm{Force}_{\neg\exists\ol{y}\neg\phi(\ol{y})}$.
	\end{proof}
	
	A drawback of this definition is that for a cardinal $\kappa$, if $\psi\in\mc{L}_{\kappa,\omega}$, $\mathrm{Force}_\psi$ may not be in $\mc{L}_{\kappa,\omega}$. For instance, if $\tau$ is countable, and $\psi\in\mc{L}_{\omega_1,\omega}$, $\mathrm{Force}_\psi$ may involve a disjunction over uncountably many formulas. The next results show that this cannot be avoided.
	
	\begin{lemma}
		There is a countable signature $\tau$, and a sentence $\psi\in\mc{L}_{\omega_1,\omega}(\tau)$ such that for any tree $T\subset \omega^{<\omega}$, there is a countable $\tau$-structure $\mc{A}_T$, uniformly computable in $T$, satisfying $\mc{A}_T\forces^*\psi$ if and only if $T$ has an infinite path.
	\end{lemma}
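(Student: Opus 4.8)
The plan is to realize the $\boldsymbol{\Sigma}^1_1$-completeness of ill-foundedness directly inside the weak forcing relation: I would encode $T$ as a structure $\mc{A}_T$ and design $\psi$ so that deciding $\mc{A}_T\forces^*\psi$ amounts to searching, through elementary extensions, for an infinite branch of $T$. Since the existence of an infinite branch is $\boldsymbol{\Sigma}^1_1$-complete (in particular not Borel), this is also precisely what will later show that $\mathrm{Force}_\psi$ cannot be taken in $\mc{L}_{\omega_1,\omega}$, whose model classes are Borel by the Lopez--Escobar theorem.

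First I would fix the signature $\tau$ to contain a binary child relation $C$, unary level predicates $P_n$ for each $n$, a constant $r$ for the root, and the ``standard block'' apparatus of Theorem \ref{theorem:alternation}, so that elementary extensions are free to introduce nonstandard elements (infinite-length nodes) attached along the tree. The structure $\mc{A}_T$ has the nodes of $T$ as its domain, interprets $C$ as the immediate-successor relation, $P_n(\sigma)$ as $|\sigma|=n$, and $r$ as the empty node; this is clearly uniformly computable in $T$. The sentence $\psi\in\mc{L}_{\omega_1,\omega}$ I would take to assert the existence of a nonstandard element $x$ (so a conjunct $\bigdoublewedge_n \neg P_n(x)$) whose standard-level initial segments exist, are genuine nodes, cohere into a $C$-chain from $r$, and remain extendible in \emph{every} elementary extension, this last clause being phrased so that under the forcing semantics it quantifies over all further extensions.

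To verify the easy direction, suppose $T$ has an infinite branch $\sigma_0\subsetneq\sigma_1\subsetneq\cdots$. I would construct an explicit $\mc{B}\succeq\mc{A}_T$ realizing a nonstandard node $x$ lying above this branch (a nonstandard block in the sense of Theorem \ref{theorem:alternation}), and check $\mc{B}\forces\psi$ clause by clause using Theorem \ref{rem:weak-forcing-new-defn}; then $\mc{A}_T\forces^*\psi$ by Lemma \ref{lemma:equivalent-characterizations}. For the converse I would pass to an $\mathbb{A}$-generic $\mc{G}\succeq\mc{A}_T$ for a fragment $\mathbb{A}$ containing $\psi$; by Lemma \ref{lemma:forcing-is-truth-in-generics} forcing agrees with truth in $\mc{G}$, so $\mc{G}\models\psi$, and from the witness $x\in\mc{G}$ I would read off an infinite $C$-chain of standard nodes, i.e.\ an infinite branch of $T$. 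Lemma \ref{Invariance} ensures throughout that whether $\psi$ is forced depends only on the first-order theory of $\mc{A}_T$, which keeps the argument independent of the choices of extension.

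The main obstacle, and the entire technical content, is the gap between ``$T$ is infinite'' and ``$T$ has an infinite branch'': for $\omega$-branching trees these differ, since König's lemma fails without finite branching. A single existential quantifier in the forcing collapses, by compactness, to finite satisfiability of the branch-type, which detects only infiniteness; concretely, a naive $\exists x\,\bigdoublewedge_n(\cdots)$ is forced whenever $T$ is infinite, even when well-founded, because the witnessing branch in an elementary extension may \emph{drift} through new nonstandard nodes that do not descend from $T$. Preventing this drift cannot be done by first-order rigidity (that would force each level finite, collapsing back to the finitely-branching case), so the only available mechanism is the alternation built into the forcing clauses of Theorem \ref{rem:weak-forcing-new-defn}, namely the pattern ``for every elementary extension there is a further elementary extension,'' which lets the forcing simulate the genuine $\boldsymbol{\Sigma}^1_1$ branch search rather than one layer of compactness. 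Designing the coherence-and-extendibility clause of $\psi$ to capture exactly this alternation, and then verifying in the generic that a forced witness cannot drift off $T$, is the step I expect to demand the most care.
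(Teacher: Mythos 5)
Your proposal stops exactly where the lemma's content begins: the ``coherence-and-extendibility clause'' of $\psi$, which you yourself identify as the step demanding the most care, is never written down. Worse, for the encoding you chose no such clause can exist. By Lemma \ref{Invariance}, whether $\mc{A}\forces^*\psi$ holds for a \emph{sentence} $\psi$ depends only on the elementary theory of $\mc{A}$. Now take $T_1$ to be the well-founded tree consisting, for each $m$, of a single chain of length $m$ hanging below the root, and $T_2 = T_1$ together with one infinite chain. In your signature (child relation, level predicates $P_n$, root, block apparatus) every first-order sentence mentions only finitely many of the $P_n$, and a routine Ehrenfeucht--Fra\"iss\'e argument (bounded quantifier rank cannot distinguish an infinite chain from sufficiently long finite ones) shows $\mc{A}_{T_1}\equiv\mc{A}_{T_2}$. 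Hence $\mc{A}_{T_1}\forces^*\psi$ if and only if $\mc{A}_{T_2}\forces^*\psi$ for \emph{every} sentence $\psi$, while $T_1$ is well-founded and $T_2$ is not. So the ``drift'' you flag is not a technical wrinkle to be absorbed by a cleverly alternating clause: it is fatal to the unlabeled-tree encoding itself, since forcing can never see past elementary equivalence (equivalently, by Lemma \ref{lemma:transfer}, the fake infinite chain realized in an elementary extension of $\mc{A}_{T_1}$ cannot be separated from a genuine branch). Your guiding heuristic --- that ``the only available mechanism is the alternation built into the forcing clauses'' --- is therefore pointed in the wrong direction: the defect must be repaired in the \emph{encoding}, not in the logical shape of $\psi$.

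The paper does exactly this. It takes unary predicates $R_{i,j}$, lets $\mc{A}_T$ have one element per $\sigma\in T$ satisfying exactly $R_{i,\sigma(i)}$ for $i<|\sigma|$, and sets $\psi = \exists x \bigdoublewedge_i\bigdoublevee_j R_{i,j}(x)$ --- precisely the ``naive'' shape you rejected. The point is that a single witness now carries the entire branch in its quantifier-free $1$-type: since the matrix is quantifier-free in the infinitary sense, weak forcing agrees with truth for it (Corollary \ref{corollary:quantifier-free-weak-forcing-is-truth}), and one application of clause ($5'$) of Theorem \ref{rem:weak-forcing-new-defn} shows $\mc{A}_T\forces^*\psi$ if and only if some type $\left\{R_{i,f(i)}\middle| i<\omega\right\}$ is finitely satisfiable in $\mc{A}_T$. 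Finite satisfiability of $\left\{R_{i,f(i)}\middle| i<n\right\}$ says exactly that $f$ restricted to $n$ lies in $T$, so mere infiniteness or unbounded height of $T$ no longer suffices: the finite fragments of the type are initial segments of one fixed function, and the theory of $\mc{A}_T$ recovers $T$ itself (so the Invariance obstruction above evaporates). No forcing alternation is used anywhere --- the whole lemma reduces to compactness. If you tried to repair your route by adding coordinate labels to your nodes so that the theory determines $T$, the child relation and level predicates would become superfluous and your construction would collapse to the paper's.
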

	
	\begin{proof}
		Let $\tau$ consist of unary relation symbols $R_{i,j}$ for $i,j\in\mathbb{N}$. Let $\psi = \exists x \bigdoublewedge_i \bigdoublevee_j R_{i,j}(x)$. For a tree $T\subset\omega^{<\omega}$, we define $\mc{A}_T$ as follows. For each $\sigma\in T$, there is an element of $\mc{A}_T$ satisfying exactly the relations $R_{i,\sigma(i)}$ for each $i$ less than the length of $\sigma$.
		
		If $\mc{A}_T\forces^* \psi$, then for some $\mc{B}\succeq\mc{A}_T$, and $b\in\mc{B}$, $\mc{B}\forces^*\bigdoublewedge\limits_i\bigdoublevee\limits_j R_{i,j}(b)$. In this case, $\mc{B}\models\bigdoublewedge\limits_i\bigdoublevee\limits_j R_{i,j}(b)$.  Then, for some function $f\in\omega^\omega$, $\mc{B}\models R_{i,f(i)}(b)$ for each $i$. This implies that the partial type $\left\{R_{i,f(i)}\middle|i<\omega\right\}$ is finitely satisfiable in $\mc{A}_T$, so for every $n$, there is a $a\in\mc{A}_T$ such that $\mc{A}_T\models R_{i,f(i)}(a)$ for $i<n$. That is, $f\res n \in T$, for all $n$, so $f$ is a path in $T$. Suppose conversely that $f$ is a path in $T$. Then, the partial type $\left\{R_{i,f(i)}\middle|i<\omega\right\}$ is finitely satisfiable in $\mc{A}_T$, so for some elementary extension $\mc{B}\succeq\mc{A}_T$, there is a $b\in\mc{B}$ realizing this type. Then, $\mc{B}\models \bigdoublewedge\limits_i R_{i,f(i)}(B)$, so $\mc{B}\models \bigdoublewedge\limits_i\bigdoublevee\limits_j R_{i,j}(b)$. In this case, $\mc{B}\forces^* \bigdoublewedge\limits_i\bigdoublevee\limits_j R_{i,j}(b)$, so $\mc{A}_T\forces^* \psi$.
	\end{proof}
	
	Let $\Mod_\mc{\tau}$ be the Polish space of $\omega$-presentations of $\tau$-structures. The mapping $T\mapsto \mc{A}_T$ witnesses the following. 
	
	\begin{corollary}\label{cor:not-countable}
		The set $\left\{\mc{A}\in \Mod_{\mc{\tau}}\middle| \mc{A}\forces^*\psi\right\}$ is $\bfSigma^1_1$ hard.
	\end{corollary}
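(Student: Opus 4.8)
The plan is to use the map $T \mapsto \mc{A}_T$ from the preceding lemma as a continuous reduction from a known $\bfSigma^1_1$-complete set. Recall that a subset of a Polish space is $\bfSigma^1_1$-hard if every $\bfSigma^1_1$ set reduces to it by a continuous (equivalently Borel) map; so it suffices to exhibit a single continuous reduction from one fixed $\bfSigma^1_1$-complete set. The canonical choice is the set of ill-founded trees: identifying a tree $T \subseteq \omega^{<\omega}$ with its characteristic function, the collection of all trees is a closed, hence Polish, subspace of $2^{\omega^{<\omega}} \cong 2^\omega$, and $\mathrm{IF} = \{T : T \text{ has an infinite path}\}$ is the prototypical $\bfSigma^1_1$-complete set in descriptive set theory.

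First I would observe that the assignment $T \mapsto \mc{A}_T$ is continuous. This is immediate from the phrase ``uniformly computable in $T$'' in the lemma: the $\omega$-presentation of $\mc{A}_T$ is produced by a single Turing functional with oracle $T$, and every oracle-computable map is continuous as a function $2^\omega \to \Mod_\tau$, since each atomic fact about $\mc{A}_T$ depends on only finitely much of $T$.

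Then the equivalence $\mc{A}_T \forces^* \psi \iff T \in \mathrm{IF}$ supplied by the lemma says exactly that $T \mapsto \mc{A}_T$ is a continuous reduction of $\mathrm{IF}$ to $\{\mc{A} \in \Mod_\tau : \mc{A} \forces^* \psi\}$. Composing with the continuous reduction of an arbitrary $\bfSigma^1_1$ set to $\mathrm{IF}$, it follows that every $\bfSigma^1_1$ set reduces continuously to $\{\mc{A} : \mc{A} \forces^* \psi\}$, which is precisely the assertion of $\bfSigma^1_1$-hardness.

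I expect essentially no obstacle here: both ingredients---the $\bfSigma^1_1$-completeness of ill-foundedness and the continuity of an oracle-computable map---are standard, and the substantive content of the reduction has already been carried out in the preceding lemma. The only point requiring any care is confirming that ``uniformly computable in $T$'' is invoked at the correct level of the coding, i.e.\ that the presentation of $\mc{A}_T$ varies continuously with $T$ in the topology of $\Mod_\tau$; but this is exactly what uniform computability delivers.
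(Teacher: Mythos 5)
Your proof is correct and is exactly the argument the paper intends: the paper's one-line justification (``the mapping $T\mapsto\mc{A}_T$ witnesses the following'') is precisely your reduction from the $\bfSigma^1_1$-complete set of ill-founded trees, with continuity of $T\mapsto\mc{A}_T$ coming from the uniform computability stated in the lemma. You have simply made explicit the standard details the authors leave implicit.
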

	
	We conclude that this set is not Borel, so is not the set of models of a $\mc{L}_{\omega_1,\omega}$ sentence. As such, we cannot have $\mathrm{Force}_\psi \in\mc{L}_{\omega_1,\omega}$.
	
	\subsection{Structures of Bounded Cardinality}\label{subsection:bounded-cardinality}
	The apparatus built up in the previous sections can be adapted to consider only structures of cardinality below a particular bound $\kappa$. In the recursive definitions of the strong and weak forcing relations, one replaces elementary extensions in general with those of cardinality below $\kappa$. In order to construct generic structures of cardinality below $\kappa$, one also needs that the fragment $\mathbb{A}$ satisfies $|\mathbb{A}|<\kappa$, and so consists of $\mc{L}_{\kappa,\omega}$ formulas. Otherwise, the proofs go through without any changes.
	
	\section{Applications of the Forcing Notion}
	
	We now apply the forcing with elementary extension introduced in the previous section to prove the main theorems of this paper.
	
	\subsection{The Main Theorem}
	
	In this section, we will prove Theorem \ref{theorem:interpolation}.
	
	\interpolationtheorem*
	
	To prove this, we will use the fact that a finitary formula $\varphi$ is equivalent to a $\exists_{n}$ formula over a theory $T$ if and only if whenever $\mc{A}\preceq_{n-1}\mc{B}$ are models of $T$, $\ol{a}\in\mc{A}$, and $\mc{A}\models\varphi(\ol{a})$, then $\mc{B}\models\varphi(\ol{a})$. This generalises the fact that a finitary formula is equivalent to an existential formula if and only if it is preserved upwards under superstructures.
	
	However, while an infinitary $\exists_1$ formula is preserved upwards under superstructures, it is not generally true that an infinitary $\exists_{n}$ formula is preserved upwards under $(n-1)$-elementary superstructures. Instead, we will show that they are preserved upwards under $(n-1)$-elementary superstructures if we consider weak forcing rather than satisfaction.
	
	\begin{lemma}\label{lemma:n-extensions-forcing}
		Suppose $\mc{A}\preceq_{n-1}\mc{B}$ and $\ol{a}\in\mc{A}$. Let $\psi$ be an infinitary $\exists_{n}$ formula. Then if $\mc{A}\forces^*\psi(\ol{a})$ then $\mc{B}\forces^*\psi(\ol{a})$.
	\end{lemma}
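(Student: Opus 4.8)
The plan is to reduce the statement to a purely finitary preservation fact by exploiting the definability of weak forcing. Since $\psi$ is an infinitary $\exists_n$ formula, the definability lemma (the one producing $\mathrm{Force}_\psi$) supplies an elementary formula
\[\mathrm{Force}_\psi(\ol{x}) = \bigdoublevee_\alpha\bigdoublewedge_\beta \theta_{\alpha,\beta}(\ol{x})\]
in which every $\theta_{\alpha,\beta}$ is a \emph{finitary} $\exists_n$ formula, and which satisfies $\mc{C}\forces^*\psi(\ol{c})$ if and only if $\mc{C}\models\mathrm{Force}_\psi(\ol{c})$ for every structure $\mc{C}$ and $\ol{c}\in\mc{C}$. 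Applying this equivalence at both $\mc{A}$ and $\mc{B}$, it suffices to prove that $\mathrm{Force}_\psi$ (infinitary, but built from finitary components) is preserved upward across $(n-1)$-elementary embeddings: if $\mc{A}\preceq_{n-1}\mc{B}$ and $\mc{A}\models\mathrm{Force}_\psi(\ol{a})$, then $\mc{B}\models\mathrm{Force}_\psi(\ol{a})$.

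The one finitary ingredient is the standard fact that a finitary $\exists_n$ formula is preserved upward under $(n-1)$-elementary superstructures: writing such a $\theta$ as $\exists\ol{y}\,\chi$ with $\chi$ finitary $\forall_{n-1}$, a witness $\ol{b}\in\mc{A}$ for $\chi(\ol{a},\ol{y})$ remains a witness in $\mc{B}$, since $\forall_{n-1}$ formulas transfer from $\mc{A}$ to $\mc{B}$ across $\preceq_{n-1}$. Granting this, preservation of $\mathrm{Force}_\psi$ is immediate: if $\mc{A}\models\bigdoublevee_\alpha\bigdoublewedge_\beta\theta_{\alpha,\beta}(\ol{a})$, fix $\alpha$ with $\mc{A}\models\theta_{\alpha,\beta}(\ol{a})$ for every $\beta$; each $\theta_{\alpha,\beta}$ is finitary $\exists_n$, so $\mc{B}\models\theta_{\alpha,\beta}(\ol{a})$ for that same $\alpha$ and every $\beta$, whence $\mc{B}\models\mathrm{Force}_\psi(\ol{a})$. (Infinitary disjunctions and conjunctions of upward-preserved formulas are upward-preserved, so the disjunctive normal shape of $\mathrm{Force}_\psi$ causes no trouble.) Combining with the definability equivalence gives $\mc{A}\forces^*\psi(\ol{a})\Rightarrow\mc{B}\forces^*\psi(\ol{a})$.

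Along this route essentially all the model-theoretic content is imported from the $\mathrm{Force}_\psi$ lemma, and the one remaining step is the textbook preservation fact, so I expect no genuine obstacle. It is worth recording why a direct induction on $\psi$ is harder, and hence why I would route through definability. One would prove simultaneously that infinitary $\exists_n$ formulas force upward and infinitary $\forall_n$ formulas force downward across $\preceq_{n-1}$, the two statements being coupled through the negation clause $(2')$. The problematic case is the existential clause $(5')$: from $\mc{A}\forces^*\exists\ol{y}\,\phi(\ol{a})$ one obtains a witness $\ol{b}$ in some \emph{full} elementary extension $\mc{A}'\succeq\mc{A}$, and to produce a witness over $\mc{B}$ one would want to amalgamate $\mc{A}'$ and $\mc{B}$ over $\mc{A}$; but $\mc{A}\preceq_{n-1}\mc{B}$ is only an $(n-1)$-elementary embedding, so the elementary amalgamation theorem used in Lemma \ref{Invariance} does not apply directly. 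This is precisely the obstruction that the $\mathrm{Force}_\psi$ lemma, with its control on quantifier complexity, is designed to absorb.
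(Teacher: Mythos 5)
Your proposal is correct and takes essentially the same route as the paper's own proof: both invoke the definability lemma to write $\mathrm{Force}_\psi = \bigdoublevee_\alpha\bigdoublewedge_\beta\theta_{\alpha,\beta}$ with each $\theta_{\alpha,\beta}$ a finitary $\exists_n$ formula, fix the witnessing disjunct $\alpha$ in $\mc{A}$, and transfer each $\theta_{\alpha,\beta}$ across $\mc{A}\preceq_{n-1}\mc{B}$ by the standard upward preservation of finitary $\exists_n$ formulas (which you, unlike the paper, spell out via the witness argument). Your concluding discussion of why a direct induction on $\psi$ would founder on the existential clause is accurate but supplementary rather than part of the proof.
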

	
	\begin{proof}
		Let $\mathrm{Force}_\psi = \bigdoublevee_\alpha \bigdoublewedge_\beta \theta_{\alpha,\beta}$, where each $\theta_{\alpha,\beta}$ is a finitary $\exists_{n}$ formula. Suppose that $\mc{A}\forces^* \psi(\ol{a})$. Then $\mc{A}\models\mathrm{Force}_\psi(\ol{a})$, and so for some $\alpha$, and every $\beta$, $\mc{A}\models\theta_{\alpha,\beta}(\ol{a})$. Because each $\theta_{\alpha,\beta}$ is a finitary $\exists_{n}$ formula, $\mc{B}\models\theta_{\alpha,\beta}(\ol{a})$ for the same $\alpha$, and every $\beta$. Therefore, $\mc{B}\models\mathrm{Force}_\psi(\ol{a})$, so $\mc{B}\forces^*\psi(\ol{a})$.
	\end{proof}
	
	With Lemma \ref{lemma:n-extensions-forcing}, we can now prove Theorem \ref{theorem:interpolation}.
	
	\begin{proof}[Proof of Theorem \ref{theorem:interpolation}]
		We prove the $\exists_n$ case; the $\forall_n$ case can be obtained by taking negations.
		
		Suppose that, as in the hypotheses of Theorem \ref{theorem:interpolation}, $\psi$ is an infinitary $\exists_n$ formula which is equivalent to a finitary formula $\varphi$ in all models of $T$. We want to show that $\psi$ and $\varphi$ are equivalent to a finitary $\exists_n$ formula in all models of $T$. To do this, suppose that $\mc{A} \preceq_{n-1} \mc{B}$ are models of $T$, $\ol{a} \in A$, and $\mc{A} \models \varphi(\ol{a})$; we must show that $\mc{B} \models \varphi(\ol{a})$.
		
		Now since $\mc{A} \models \varphi(\ol{a})$ and $\varphi$ is finitary, by Lemma \ref{lemma:finitary-forcing-is-truth} we have that $\mc{A} \forces^* \varphi(\ol{a})$. Since $\varphi$ and $\psi$ are equivalent in $\mc{A}$, and forcing respects this (Lemma \ref{lemma:monotonicity}), we have that $\mc{A} \forces^* \psi(\ol{a})$. But we just proved in Lemma \ref{lemma:n-extensions-forcing} that forcing an $\exists_n$ formula is preserved upwards under $n-1$-elementary superstructures, and so $\mc{B} \forces^* \psi(\ol{a})$. Using the same equivalences as before, we get that $\mc{B} \forces^* \varphi(\ol{a})$ and then that $\mc{B} \models \varphi(\ol{a})$. This completes the argument.
	\end{proof}
	
	\begin{remark}
		Suppose that $\tau$ is a countable and $\varphi$ is a sentence of $\mc{L}_{\omega,\omega}$. 
		By the L\"owenheim-Skolem theorem for $\mc{L}_{\omega_1,\omega}$, if $\psi$ is a sentence of $\mc{L}_{\omega_1,\omega}$ and $\varphi$ and $\psi$ are equivalent in all countable structures, they are equivalent in all structures. Using Theorem \ref{theorem:interpolation} and Vaught's version of the Lopez-Escobar theorem \cite{Vaught75}, we have that the following are equivalent.
		\begin{enumerate}
			\item $\varphi$ is equivalent to a finitary $\exists_n$ sentence (respectively $\forall_n$).
			\item $\left\{\mc{A}\in\Mod_\tau\middle|\mc{A}\models \varphi\right\}$ is $\bfSigma^0_n$ (respectively, $\bfPi^0_n$).
		\end{enumerate}
		Thus Vaught's version of the Lopez-Escobar theorem specialises to the case of finitary formulas.
	\end{remark}
	
	The proof of Theorem \ref{theorem:interpolation} we gave above makes use of standard ideas that show up in forcing, like the definability of forcing. One can also give a more hands-on proof which has a different sort of explanatory power. The outline of this proof is as follows. We prove the contrapositive: Supposing that $\varphi\in\mc{L}_{\omega,\omega}$ is not equivalent to any finitary $\exists_n$ formula over $T$, we aim to produce a model witnessing that $\varphi$ is not equivalent over $T$ to some particular $\exists_n$ formula $\psi$. Using the fact that $\varphi$ is not equivalent to any finitary $\exists_n$ formula, we can construct models of $T$, $\mc{A}\preceq_{n-1} \mc{B}$, such that for some $\ol{a}\in\mc{A}$, $\mc{A}\models \varphi(\ol{a})$ and $\mc{B}\models\neg\varphi(\ol{a})$. It suffices then to construct either an elementary extension of $\mc{A}$ modeling $\neg\psi(\ol{a})$ or an elementary extension of $\mc{B}$ modeling $\psi(\ol{a})$. Considering $\mathbb{A}$-generic elementary extensions of $\mc{A}$ and $\mc{B}$ for a fragment $\mathbb{A}$ containing $\psi$, it suffices to show that either $\mc{A}\forces^*\neg\psi(\ol{a})$ or that $\mc{B}\forces^*\psi(\ol{a})$. This is the content of Lemma $\ref{lemma:n-extensions-forcing}$. This lemma can be proved by a more semantic route, using the following amalgamation lemmas, both of which are applications of compactness.
	
	\begin{lemma}\label{lemma:amalgamation-1}
		Suppose $\mc{A}\preceq_n \mc{B}$ and $\mc{A}\preceq \mc{A}^\prime$. There is a $\mc{B}^\prime$ such that $\mc{B}^\prime \succeq \mc{B}$ and $\mc{B}^\prime \succeq_n \mc{A}^\prime$.
	\end{lemma}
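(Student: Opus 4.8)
The plan is to construct $\mc{B}'$ by a compactness (amalgamation) argument, fusing the full elementary diagram of $\mc{B}$ with an ``$n$-elementary diagram'' of $\mc{A}'$ over their common part $\mc{A}$. I would introduce a constant for every element of $\mc{B}$ and every element of $\mc{A}'$, identifying the constants for the shared elements of $\mc{A}$ (recall $\mc{A}\subseteq\mc{B}$ and $\mc{A}\subseteq\mc{A}'$). Let $\mathrm{ElDiag}(\mc{B})$ be the set of all first-order $\mc{L}(\mc{B})$-sentences true in $\mc{B}$, and let $D_n(\mc{A}')$ be the set of all $\exists_n$ and $\forall_n$ formulas, with parameters naming elements of $\mc{A}'$, that hold in $\mc{A}'$. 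I would show the theory $\Gamma=\mathrm{ElDiag}(\mc{B})\cup D_n(\mc{A}')$ is consistent; any model $\mc{B}'$ of $\Gamma$ is, after identifying the named copy of $\mc{A}'$ with $\mc{A}'$ itself, an elementary extension of $\mc{B}$ by $\mathrm{ElDiag}(\mc{B})$, and satisfies $\mc{A}'\preceq_n\mc{B}'$: the quantifier-free part of $D_n(\mc{A}')$ makes the inclusion an embedding, and since for each $\exists_n$/$\forall_n$ formula $\phi$ the structure $\mc{A}'$ satisfies either $\phi$ or $\neg\phi$ (the latter being of dual type, hence also in the scope of $D_n(\mc{A}')$), every $\exists_n$/$\forall_n$ fact is forced to transfer in both directions.

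For consistency it suffices, by compactness, to treat a finite subset of $\Gamma$. I would conjoin its $D_n(\mc{A}')$-part, grouping the $\exists_n$ conjuncts into a single $\exists_n$ formula $\theta_\exists$ and the $\forall_n$ conjuncts into a single $\forall_n$ formula $\theta_\forall$ (both classes are closed under finite conjunction by clause (3) of the definition of $\exists_n$/$\forall_n$), and write the parameters as $\ol{a}\in\mc{A}$ together with $\ol{e}\in\mc{A}'\setminus\mc{A}$, so that $\mc{A}'\models(\theta_\exists\wedge\theta_\forall)(\ol{a},\ol{e})$. If this finite set were inconsistent with $\mathrm{ElDiag}(\mc{B})$, then, as the constants $\ol{e}$ do not occur in $\mathrm{ElDiag}(\mc{B})$, I would obtain $\mathrm{ElDiag}(\mc{B})\vdash\forall\ol{y}\,\neg(\theta_\exists\wedge\theta_\forall)(\ol{a},\ol{y})$, and hence $\mc{B}\models\forall\ol{y}\,\neg(\theta_\exists\wedge\theta_\forall)(\ol{a},\ol{y})$, since $\mc{B}$ models its own elementary diagram.

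The heart of the argument is then to contradict this. From $\mc{A}'\models(\theta_\exists\wedge\theta_\forall)(\ol{a},\ol{e})$ we have $\mc{A}'\models\exists\ol{y}\,(\theta_\exists\wedge\theta_\forall)(\ol{a},\ol{y})$. The naive move---transferring this statement directly across $\mc{A}\preceq_n\mc{B}$---fails, because prefixing $\exists\ol{y}$ to a conjunction of an $\exists_n$ and a $\forall_n$ formula lands one in $\exists_{n+1}$, which $\preceq_n$ does not control; this quantifier-complexity gap is the main obstacle. The fix is to descend before ascending: using the \emph{full} elementarity $\mc{A}\preceq\mc{A}'$, I would pull the existential witness down into $\mc{A}$, obtaining $\ol{e}'\in\mc{A}$ with $\mc{A}\models\theta_\exists(\ol{a},\ol{e}')$ and $\mc{A}\models\theta_\forall(\ol{a},\ol{e}')$. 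Now all parameters lie in $\mc{A}$, so each conjunct can be transferred \emph{separately} upward across $\mc{A}\preceq_n\mc{B}$---$\theta_\exists$ because it is $\exists_n$ and $\theta_\forall$ because it is $\forall_n$---yielding $\mc{B}\models(\theta_\exists\wedge\theta_\forall)(\ol{a},\ol{e}')$ and hence $\mc{B}\models\exists\ol{y}\,(\theta_\exists\wedge\theta_\forall)(\ol{a},\ol{y})$, contradicting the previous paragraph. By compactness $\Gamma$ is consistent, and any model gives the desired $\mc{B}'$.
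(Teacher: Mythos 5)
Your proof is correct, and it is exactly the kind of argument the paper intends: the paper offers no written proof of this lemma, remarking only that it is ``an application of compactness,'' and your argument---fusing $\mathrm{ElDiag}(\mc{B})$ with the $\exists_n/\forall_n$-diagram of $\mc{A}'$ over $\mc{A}$, then pulling the existential witness down into $\mc{A}$ via $\mc{A}\preceq\mc{A}'$ so that the $\exists_n$ and $\forall_n$ conjuncts can be pushed up across $\mc{A}\preceq_n\mc{B}$ separately---is the standard instantiation of that compactness argument. In particular, your identification of the quantifier-complexity gap (the naive transfer of $\exists\ol{y}(\theta_\exists\wedge\theta_\forall)$ would be $\exists_{n+1}$) and the descend-before-ascending fix is precisely the crux of the lemma.
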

	
	\begin{lemma}\label{lemma:amalgamation-2}
		If $\mc{A}\preceq_{n+1} \mc{B}$, then there is a $\mc{C}\succeq_{n} \mc{B}$ with $\mc{C}\succeq \mc{A}$.
	\end{lemma}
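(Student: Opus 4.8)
The plan is to build $\mc{C}$ by a single compactness argument, using $\mc{A}$ itself as the base structure (whose full elementary diagram we preserve) while forcing a copy of $\mc{B}$ to sit $n$-elementarily inside the result. I would work in the signature $\tau$ augmented by a constant $c_b$ for every $b\in\mc{B}$, reusing the same constant for $b$ whether it is regarded as an element of $\mc{A}$ or of $\mc{B}$ (legitimate since $\mc{A}\subseteq\mc{B}$). Let
\[ \Gamma = \operatorname{ElDiag}(\mc{A}) \cup \{\, \phi(\ol{c}) : \phi \text{ is a finitary } \exists_n \text{ or } \forall_n \text{ formula},\ \ol{b}\in\mc{B},\ \mc{B}\models\phi(\ol{b}) \,\}, \]
where $\operatorname{ElDiag}(\mc{A})$ is the elementary diagram of $\mc{A}$ in the constants $\{c_a : a\in\mc{A}\}$ and $\ol{c}$ names the tuple $\ol{b}$. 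A model $\mc{C}$ of $\Gamma$ is exactly what we want: the elementary diagram gives $\mc{A}\preceq\mc{C}$; the quantifier-free part of the second set (which is included, since quantifier-free formulas are both $\exists_n$ and $\forall_n$) embeds $\mc{B}$ into $\mc{C}$; and agreement on all $\exists_n$ and $\forall_n$ formulas in both directions — we included every true $\exists_n$ formula and every true $\forall_n$ formula of $\mc{B}$ — makes this embedding $n$-elementary, i.e.\ $\mc{B}\preceq_n\mc{C}$. After relabelling we obtain $\mc{A}\subseteq\mc{B}\subseteq\mc{C}$ with $\mc{C}\succeq\mc{A}$ and $\mc{C}\succeq_n\mc{B}$.

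Everything then reduces to showing $\Gamma$ is finitely satisfiable. A finite fragment consists of a first-order sentence $\delta(\ol{a})$ with $\mc{A}\models\delta(\ol{a})$ (the conjunction of the chosen elementary-diagram axioms) together with a sentence $\sigma(\ol{a},\ol{b})$, the conjunction of the finitely many chosen $\exists_n$ and $\forall_n$ formulas, where $\mc{B}\models\sigma(\ol{a},\ol{b})$ and $\ol{b}$ lists the parameters lying outside $\mc{A}$. I claim this fragment can already be satisfied inside $\mc{A}$: interpret $c_a$ as $a$, and choose suitable interpretations for the constants naming $\ol{b}$. Since $\mc{B}\models\sigma(\ol{a},\ol{b})$ we have $\mc{B}\models\exists\ol{y}\,\sigma(\ol{a},\ol{y})$. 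Now $\sigma$ is a finite conjunction of $\exists_n$ and $\forall_n$ formulas, hence an $\exists_{n+1}$ formula, and prefixing an existential block keeps it $\exists_{n+1}$; so $\exists\ol{y}\,\sigma(\ol{a},\ol{y})$ is $\exists_{n+1}$. Because $\mc{A}\preceq_{n+1}\mc{B}$, this formula transfers down from $\mc{B}$ to $\mc{A}$, whence $\mc{A}\models\exists\ol{y}\,\sigma(\ol{a},\ol{y})$. Choosing witnesses $\ol{a}'\in\mc{A}$ and interpreting the constants for $\ol{b}$ as $\ol{a}'$ makes $\mc{A}$ a model of $\delta(\ol{a})\wedge\sigma(\ol{a},\ol{a}')$. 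By compactness $\Gamma$ is consistent, and the proof is complete.

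The crux of the argument — and the reason the hypothesis sits at level $n+1$ while the conclusion is at level $n$ — is the complexity computation just given: the extra quantifier block available in $\preceq_{n+1}$ is precisely what is consumed when we existentially quantify the new constants naming $\mc{B}$. The one genuinely non-formal point to verify carefully is that a finite conjunction of finitary $\exists_n$ and $\forall_n$ formulas is $\exists_{n+1}$ (so the existential closure lands in $\exists_{n+1}$, not $\exists_{n+2}$); this follows from the monotonicity inclusions $\exists_n,\forall_n\subseteq\exists_{n+1}$ together with closure of $\exists_{n+1}$ under finite conjunction. I would isolate these routine closure facts once (here or in the preliminaries), after which the remainder is a direct application of compactness.
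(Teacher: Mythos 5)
Your proof is correct, and it is essentially the argument the paper intends: the paper states this lemma without proof, noting only that both amalgamation lemmas ``are applications of compactness,'' and your $\Gamma = \operatorname{ElDiag}(\mc{A}) \cup (\exists_n/\forall_n\text{-theory of } \mc{B} \text{ with parameters})$ together with the downward transfer of $\exists\ol{y}\,\sigma$ along $\mc{A}\preceq_{n+1}\mc{B}$ is exactly the standard such argument. The one point you flag is indeed routine: under the paper's definition the inclusion $\forall_n\subseteq\exists_{n+1}$ holds only up to logical equivalence (pad with a dummy existential quantifier), which suffices because transfer along $\preceq_{n+1}$ respects logical equivalence.
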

	
	We then proceed by induction on $n$. Appealing to induction on the complexity of $\psi$, we can reduce to the case that $\psi(\ol{x})$ is of the form $\exists \ol{y}\eta(\ol{xy})$, where $\eta$ is $\forall_{n-1}$. Suppose $\mc{A}\forces^* \psi(\ol{a})$. Then, for some $\mc{A}^\prime\succeq\mc{A}$, and $\ol{a^\prime}\in\mc{A}^\prime$, $\mc{A}^\prime\forces \eta(\ol{aa^\prime})$. Applying Lemma \ref{lemma:amalgamation-1}, we have a $\mc{B}^\prime\succeq \mc{B}$, such that $\mc{A}^\prime\preceq_{n-1} \mc{B}^\prime$. Applying Lemma \ref{lemma:amalgamation-2}, there is a $\mc{C}\succeq_{n-2}\mc{B}^\prime$ such that $\mc{C}\succeq \mc{A}^\prime$. Then, $\mc{C}\forces^*\eta(\ol{aa^\prime})$. Appealing to induction on $n$, in the case of the pair of structures $\mc{B}^\prime\preceq_{n-2} \mc{C}$ and the $\exists_{n-1}$ formula $\neg\eta$, we have that $\mc{B}^\prime\forces^*\eta(\ol{aa^\prime})$, so $\mc{B}\forces^*\psi(\ol{a})$. Unraveling this argument, the recursive definition of the weak forcing relation guides a construction of a sequence of elementary extensions containing witnesses for subformulas of either $\psi$ or $\neg\psi$.
	
	\subsection{Preservation by Elementary Extensions}
	
	\subsubsection{\texorpdfstring{Preservation of formulas in $\mc{L}_{\infty,\omega}$}{Preservation of infinitary formulas}}
	
	We will now prove Theorem \ref{theorem:preservation}.
	\preservationtheorem*
	
	\begin{proof}
		Suppose that $\psi$ is equivalent to an elementary formula $\phi = \bigdoublevee_\alpha \bigdoublewedge_\beta \theta_{\alpha,\beta}$ in all models of $T$. It suffices to show that $\phi$ transfers across elementary extensions. Let $\mc{A}\preceq\mc{B}$, $\ol{a}\in\mc{A}$. If $\mc{A}\models \phi(\ol{a})$, then for some $\alpha$, and every $\beta$, $\mc{A}\models \theta_{\alpha,\beta}(\ol{a})$. Then, for the same $\alpha$, and every $\beta$, $\mc{B}\models \theta_{\alpha,\beta}(\ol{a})$, so $\mc{B}\models \phi(\ol{a})$. Conversely, if $\mc{B}\models\phi(\ol{a})$, then for some $\alpha$, and every $\beta$, $\mc{B}\models\theta_{\alpha,\beta}(\ol{a})$, in which case, for the same $\alpha$ and every $\beta$, $\mc{A}\models \theta_{\alpha,\beta}(\ol{a})$. We conclude that $\mc{A}\models\phi(\ol{a})$.
		
		Suppose that $\psi$ transfers across elementary extensions of models of $T$. We will show that $\psi$ is equivalent to $\mathrm{Force}_\psi$ in every model of $T$. Let $\mathbb{A}$ be a fragment containing $\psi$. If $\mc{A}\models T+ \psi(\ol{a})$, then because $\psi$ transfers across elementary extensions of models of $T$, for any $\mathbb{A}$-generic $\mc{G}\succeq\mc{A}$, $\mc{G}\models\psi(\ol{a})$. Consequently, $\mc{A}\forces^* \psi(\ol{a})$, so $\mc{A}\models \mathrm{Force}_\psi(\ol{a})$. Suppose conversely that $\mc{A}\models T+\mathrm{Force}_\psi(\ol{a})$. Then, $\mc{A}\forces^*\psi(\ol{a})$. Let $\mc{G}\succeq\mc{A}$ be $\mathbb{A}$-generic. Then, $\mc{G}\models T+\psi(\ol{a})$, so because $\psi$ transfers across elementary extensions of models of $T$, $\mc{A}\models\psi(\ol{a})$.
		
	\end{proof}
	
	\subsubsection{\texorpdfstring{$\mc{L}_{\omega_1,\omega}$ and the Malitz Interpolation Theorem}{L{omega1,omega} and the Malitz Interpolation Theorem}}
	
	Note that in Theorem \ref{theorem:preservation}, even if the formula $\psi(\ol{x})$ is in $\mc{L}_{\omega_1,\omega}$, the resulting formula in (2) may not be in $\mc{L}_{\omega_1,\omega}$ (as in Corollary \ref{cor:not-countable}). For the language $\mc{L}_{\omega_1,\omega}$, one can obtain a better result. First, we show what one can get from the Malitz interpolation theorem.
	
	\begin{theorem}[Malitz interpolation theorem \cite{Malitz69}]\label{theorem:Malitz-interpolation}
		Suppose the signature $\tau$ has no function symbols. Let $\varphi$, $\psi$ be sentence of $\mc{L}_{\omega_1,\omega}$ such that $\psi$ is universal $\left(\forall_1\right)$, and $\varphi\models\psi$. Then, there is a universal sentence $\theta$ of $\mc{L}_{\omega_1,\omega}$ such that $\varphi\models\theta$, $\theta\models \psi$ and every symbol occurring in $\theta$ occurs in both $\varphi$ and $\psi$.
	\end{theorem}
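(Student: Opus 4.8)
The plan is to prove this by the classical model-theoretic separation argument, adapted to $\mc{L}_{\omega_1,\omega}$ by replacing ordinary compactness with Barwise compactness. Write $\tau_\varphi$, $\tau_\psi$ for the sets of symbols occurring in $\varphi$ and $\psi$, and $\tau_0 = \tau_\varphi \cap \tau_\psi$ for the common symbols; the conclusion demands an interpolant over $\tau_0$. First I would reduce to the countable case: by the downward Löwenheim--Skolem theorem for $\mc{L}_{\omega_1,\omega}$ it suffices to separate the models of $\varphi$ from the models of $\neg\psi$ among countable structures, so I may fix a countable admissible set $\mathbb{A}$ with $\varphi,\psi\in\mathbb{A}$ and work inside the fragment $\mc{L}_{\mathbb{A}}$. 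Let $\Theta$ be the set of all universal $(\forall_1)$ $\tau_0$-sentences $\chi\in\mc{L}_{\mathbb{A}}$ with $\varphi\models\chi$, and put $\theta=\bigdoublewedge_{\chi\in\Theta}\chi$. Because $\mathbb{A}$ is countable, $\theta$ is a countable conjunction of $\forall_1$ sentences, hence (by clause (3) of the $\forall_n$ definition) itself a universal $\mc{L}_{\omega_1,\omega}$ sentence over $\tau_0$, and $\varphi\models\theta$ holds by construction. Everything then reduces to showing $\theta\models\psi$; keeping $\Theta$ countable so that $\theta$ is a \emph{legitimate} $\mc{L}_{\omega_1,\omega}$ sentence, and genuinely universal rather than of higher complexity, is exactly what this bookkeeping secures.

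To prove $\theta\models\psi$ I would argue by contradiction: suppose $\mc{B}$ is a $\tau_\psi$-model of $\theta\wedge\neg\psi$, which by Löwenheim--Skolem I may take countable and lying in $\mathbb{A}$. The key step, and the one I expect to be the main obstacle, is an embedding lemma: since $\mc{B}\res\tau_0\models\Theta$, there is a $\tau_\varphi$-structure $\mc{A}\models\varphi$ into whose reduct $\mc{A}\res\tau_0$ the structure $\mc{B}\res\tau_0$ embeds as a substructure. In finitary logic this is the standard fact that a structure embeds into a model of a theory precisely when it satisfies the theory's universal consequences; the proof goes through the consistency of $\varphi$ together with the atomic $\tau_0$-diagram of $\mc{B}$ in the signature expanded by a new constant for each element of $\mc{B}$. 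Here the lack of compactness is the difficulty, and it is overcome by the countability arranged above: since $\mc{B}\in\mathbb{A}$ this diagram is countable and $\Sigma_1$ over $\mathbb{A}$, so Barwise compactness applies. If the combined theory were inconsistent, an $\mathbb{A}$-finite witness to inconsistency would, after generalizing the new constants, assemble into a universal $\tau_0$-sentence of $\mc{L}_{\mathbb{A}}$ implied by $\varphi$ yet false in $\mc{B}$, contradicting $\mc{B}\res\tau_0\models\Theta$.

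With $\mc{A}$ in hand I would amalgamate, and here the hypothesis that $\tau$ has no function symbols does the essential work. Because all symbols are relational, I can freely expand $\mc{A}\res\tau_0$ to a $\tau_\psi$-structure $\mc{A}^*$ by interpreting each symbol of $\tau_\psi\setminus\tau_0$ so as to extend its interpretation on the embedded copy of $\mc{B}$ and defining it arbitrarily on the remaining tuples; with function symbols present this free extension would be blocked by closure requirements. Then $\mc{B}\subseteq\mc{A}^*$ as a $\tau_\psi$-substructure, and since $\neg\psi$ is existential and existential $\mc{L}_{\omega_1,\omega}$ sentences are preserved under passing to superstructures, $\mc{A}^*\models\neg\psi$. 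Interpreting $\tau_\varphi$ via $\mc{A}$ and $\tau_\psi$ via $\mc{A}^*$ on the shared universe yields a single $(\tau_\varphi\cup\tau_\psi)$-structure satisfying $\varphi\wedge\neg\psi$, contradicting $\varphi\models\psi$. This establishes $\theta\models\psi$, so $\theta$ is the required universal $\mc{L}_{\omega_1,\omega}$ interpolant over the common symbols.
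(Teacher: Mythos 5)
The paper does not actually prove this statement --- it quotes it from Malitz \cite{Malitz69} --- so your attempt has to be measured against the known proof, and it has a genuine gap at exactly the step you flagged as the main obstacle: the embedding lemma. The problem is your use of Barwise compactness. First, a bookkeeping error: $\mathbb{A}$ is a fixed countable admissible set chosen from $\varphi$ and $\psi$ alone, and a countable model $\mc{B}$ of $\theta\wedge\neg\psi$ cannot in general be taken to ``lie in $\mathbb{A}$''; you must pass to a larger admissible $\mathbb{A}'\ni\mc{B}$, and then any universal sentence you extract lives in $\mc{L}_{\mathbb{A}'}$ rather than $\mc{L}_{\mathbb{A}}$, so it need not belong to the set $\Theta$ from which $\theta$ was built, and the intended contradiction with $\mc{B}\models\theta$ does not follow. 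Second, and fatally: Barwise compactness witnesses inconsistency by an $\mathbb{A}'$-\emph{finite} subtheory, i.e.\ a subtheory that is an \emph{element} of $\mathbb{A}'$, which is typically an infinite set. Its diagram part can mention infinitely many of the new constants, and ``generalizing the new constants'' then requires an infinite block of universal quantifiers --- a sentence of $\mc{L}_{\omega_1,\omega_1}$, not a universal $\mc{L}_{\omega_1,\omega}$ sentence. This is not a removable artifact: take $\tau_0=\{R\}$, $\varphi=\bigdoublevee_n\,\forall x_1\dots x_n\,\neg(x_1Rx_2\wedge\dots\wedge x_{n-1}Rx_n)$, and $\mc{B}=(\omega,<)$. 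Then $\{\varphi\}\cup\mathrm{diag}(\mc{B})$ is inconsistent, but every subset mentioning only finitely many constants is consistent, so every inconsistent $\mathbb{A}'$-finite subtheory involves infinitely many constants. (A universal consequence of $\varphi$ false in $\mc{B}$ does exist here --- $\varphi$ itself --- but your extraction method cannot find it.) The finite-quantifier constraint of $\mc{L}_{\omega_1,\omega}$ is precisely what makes Malitz's theorem hard, and it is exactly what the diagram-plus-compactness route fails to respect.

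What is sound in your proposal: the reduction to countable structures, the definition of $\theta$ as a countable conjunction of universal consequences (using that $\forall_1$ is closed under countable conjunction), and the final amalgamation step --- freely expanding the $\tau_0$-superstructure by the symbols of $\tau_\psi\setminus\tau_0$, which correctly isolates where the hypothesis of no function symbols is used, together with upward preservation of the existential sentence $\neg\psi$. But these are the routine parts. The actual content of the theorem is the embedding lemma, and the known proofs (Lopez-Escobar's interpolation method as refined by Malitz, via consistency properties or cut-free proof analysis) establish it by a construction that introduces constants only finitely many at a time, each stage governed by universal formulas in finitely many variables; that is the missing idea that replaces compactness here, and without it your argument does not go through.
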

	As a consequence of this, Malitz proves the following, which applies to any signature $\tau$. (The version below, in which everything happens relative to a background theory represented by $\sigma$, appears in \cite{Keisler71}; Malitz also shows that if a formula is preserved both upwards and downwards then it is equivalent to a quantifier-free sentence, but this is not true relative to a sentence $\sigma$.)
	\begin{theorem}[Malitz \cite{Malitz69}]\label{theorem:Malitz-preservation}
		Let $\varphi$ and $\sigma$ be sentences of $\mc{L}_{\omega_1,\omega}$. The following are equivalent.
		\begin{enumerate}
			\item If $\mc{A}\subset\mc{B}$, $\mc{A}\models\sigma$, $\mc{B}\models\sigma$, and $\mc{A}\models \varphi$, then $\mc{B}\models\varphi$.
			\item There is an existential sentence $\theta$ of $\mc{L}_{\omega_1,\omega}$ such that $\sigma\models\varphi\leftrightarrow\theta$.
		\end{enumerate}
	\end{theorem}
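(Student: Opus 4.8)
The plan is to prove the two implications separately, with the Malitz interpolation theorem (Theorem \ref{theorem:Malitz-interpolation}) carrying the hard direction.

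For $(2)\Rightarrow(1)$ I would argue directly. Suppose $\sigma\models\varphi\leftrightarrow\theta$ with $\theta$ an existential sentence, and let $\mc{A}\subseteq\mc{B}$ be models of $\sigma$ with $\mc{A}\models\varphi$. Since $\mc{A}\models\sigma$ we get $\mc{A}\models\theta$; an existential ($\exists_1$) formula of $\mc{L}_{\omega_1,\omega}$ is preserved upward under superstructures, so $\mc{B}\models\theta$; and as $\mc{B}\models\sigma$ this yields $\mc{B}\models\varphi$. This direction uses no interpolation.

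For $(1)\Rightarrow(2)$ the first step is to reduce to the relational case, since Theorem \ref{theorem:Malitz-interpolation} requires a signature with no function symbols: replace each function and constant symbol by its graph relation and check that both the preservation hypothesis and the existential form of the eventual $\theta$ transfer back. Next I would encode the substructure relation using a fresh unary predicate $U$ together with relativization $\mu\mapsto\mu^{(U)}$ (replacing each $\exists x$ by $\exists x\,(U(x)\wedge\cdots)$ and each $\forall x$ by $\forall x\,(U(x)\to\cdots)$). Because $\tau$ is countable, ``$U$ defines a nonempty substructure'' together with $\sigma^{(U)}$ and $\varphi^{(U)}$ can each be written as a single $\mc{L}_{\omega_1,\omega}$ sentence. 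Hypothesis (1) then says exactly that in any structure in which the $U$-part is a model of $\sigma\wedge\varphi$ sitting inside a model of $\sigma$, the whole structure satisfies $\varphi$; that is,
\[ \sigma\wedge\sigma^{(U)}\wedge\varphi^{(U)}\models\varphi. \]
Here $U$ occurs only on the left, so the common vocabulary of the two sides is precisely $\tau$. Feeding this to interpolation produces a $\tau$-sentence $\theta$ with $\sigma\wedge\sigma^{(U)}\wedge\varphi^{(U)}\models\theta\models\varphi$; collapsing $U$ to the entire universe (making relativization trivial) gives $\sigma\wedge\varphi\models\theta$, while $\theta\models\varphi$ supplies the converse over $\sigma$, so $\sigma\models\varphi\leftrightarrow\theta$.

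The main obstacle is controlling the \emph{quantifier form} of $\theta$: plain Craig interpolation would only return a $\tau$-sentence of uncontrolled complexity, whereas I need $\theta$ existential. This is exactly the feature that distinguishes Malitz's interpolation theorem from ordinary interpolation, since it yields a \emph{universal} interpolant whenever the conclusion is universal (equivalently, by negating both sides, an existential interpolant whenever the premise is existential). The delicate point is that the vocabulary-separation above and the quantifier-form requirement pull in opposite directions: in the extension encoding it is $\varphi$ (not universal) that lands on the conclusion side, so I would instead pass to the dual statement that $\neg\varphi$ is preserved under substructures relative to $\sigma$, arrange the encoding so that the form-controllable side carries a universal sentence, extract a universal interpolant $\rho$ with $\sigma\models\neg\varphi\leftrightarrow\rho$, and set $\theta=\neg\rho$. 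Getting the relativization to simultaneously confine $U$ to one side and present the correct side in the universal (dually existential) form is the crux of the argument, and is precisely where Malitz's form-preserving interpolation, rather than mere Craig interpolation, is indispensable.
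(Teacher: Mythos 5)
First, note that the paper does not prove this theorem at all: it is quoted from Malitz \cite{Malitz69}, with the relativized version (the form stated here, with the background sentence $\sigma$) attributed to Keisler \cite{Keisler71}. So your attempt must be judged on its own merits. Your direction $(2)\Rightarrow(1)$ is correct, the reduction to relational signatures is standard (though you should make sure the translated $\sigma$ contains the totality/functionality axioms for the graph relations, since otherwise relational substructures do not correspond to function-closed substructures), and your relativization encoding of (1) as the entailment $\sigma\wedge\sigma^{(U)}\wedge\varphi^{(U)}\wedge\exists x\,U(x)\models\varphi$, with $U$ confined to the left and common vocabulary $\tau$, is also correct.

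The genuine gap is exactly at the point you yourself call ``the crux,'' and the dualization you propose does not close it. Theorem \ref{theorem:Malitz-interpolation} controls the interpolant's form only when the \emph{conclusion} of the entailment is universal. In your primal encoding the conclusion is $\varphi$, which is arbitrary. Passing to ``$\neg\varphi$ is preserved under substructures relative to $\sigma$'' yields the entailment
\[ \sigma\wedge\neg\varphi\wedge\exists x\,U(x)\wedge\sigma^{(U)}\models(\neg\varphi)^{(U)}, \]
and now two things fail simultaneously: the conclusion $(\neg\varphi)^{(U)}$ is universal precisely when $\neg\varphi$ is (relativization preserves the $\forall_1$ shape but cannot create it), so requiring it to be universal is circular --- it presupposes the very conclusion of the theorem; and $U$ now occurs on the right as well, so the vocabulary separation you relied on is also destroyed. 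The same obstruction recurs under the natural rearrangements (e.g.\ a primed copy $\tau'$ of the signature with a universal linking sentence $\forall\ol{x}\,(U(\ol{x})\to(R(\ol{x})\leftrightarrow R'(\ol{x})))$): whichever side of the entailment carries $\varphi$, $\neg\varphi$, or a relativized/primed copy of it, the quantifier form of that side is exactly what the theorem is trying to establish. Plain Craig interpolation for $\mc{L}_{\omega_1,\omega}$ (Lopez-Escobar) applied to your encoding does produce a $\tau$-sentence $\theta$ with $\sigma\models\varphi\leftrightarrow\theta$, but with no control on its quantifier form --- i.e.\ your argument, as written, proves only the form-free statement. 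Note also that one cannot retreat to conjoining all universal consequences of $\sigma\wedge\neg\varphi$: without compactness that conjunction need not lie in $\mc{L}_{\omega_1,\omega}$ (compare Corollary \ref{cor:not-countable} and the Borel-coding argument the paper needs in Theorem \ref{theorem:preservationctble}). Extracting the preservation theorem from the interpolation theorem thus requires a further idea (in Malitz's and Keisler's treatments this is supplied by the underlying consistency-property machinery, roughly a Skolem-type universal expansion on the hypothesis side), and your sketch defers precisely that step rather than supplying it.
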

	
	Note that these theorems are valid only for $\mc{L}_{\omega_1,\omega}$ (and $\mc{L}_{\omega,\omega})$. Malitz \cite{Malitz71} has shown that the Craig interpolation theorem fails in $\mc{L}_{\kappa,\omega}$ for $\kappa > \omega_1$, and indeed there are examples with no interpolant in $\mc{L}_{\infty,\omega}$. We are not sure to what degree Theorem \ref{theorem:Malitz-preservation} fails in $\mc{L}_{\kappa,\omega}$, but Malitz \cite{Malitz69} has shown that there is a set of $\mc{L}_{\omega_1, \omega}$ sentences closed under substructures which is not equivalent to any set of universal $\mc{L}_{\omega_1,\omega}$ sentences. This set of sentences is, however, equivalent to a universal $\mc{L}_{\omega_2,\omega}$ sentence.
	
	Using this theorem, we can give a different characterization of formulas $\mc{L}_{\omega_1,\omega}$ that are preserved by elementary extensions. We say that a formula is $\exists_1$ over finitary formulas if it can be obtained from finitary formulas by taking conjunctions, disjunctions, and existential quantification. Similarly, we say that a formula is $\forall_1$ over finitary formulas if it can be obtained from finitary formulas by taking conjunctions, disjunctions, and universal quantification.
	
	\begin{theorem}\label{theorem:Malitz-preservation-consequence}
		Suppose $\psi$ is a formula of $\mc{L}_{\omega_1,\omega}$. The following are equivalent.
		\begin{enumerate}
			\item  Given $\mc{A} \preceq \mc{B}$, $\mc{A} \models \varphi(\ol{a})$  if and only if $\mc{B} \models \varphi(\ol{a})$.
			\item There are formulas $\alpha$ and $\beta$ of $\mc{L}_{\omega_1,\omega}$ such that $\alpha$ is $\forall_1$ over finitary formulas, $\beta$ is $\exists_1$ over finitary formulas, and $\psi$, $\alpha$, and $\beta$ are all equivalent.
		\end{enumerate}
	\end{theorem}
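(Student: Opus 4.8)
The plan is to prove the two implications separately, the engine being the Malitz preservation theorem (Theorem \ref{theorem:Malitz-preservation}) combined with a Morleyization that converts elementary extensions into substructures. I would take $\tau$ to be countable --- the natural setting here, since every $\mc{L}_{\omega_1,\omega}$ formula lies in a countable signature --- and would reduce to sentences by adjoining constants $\ol{c}$ for the free variables $\ol{x}$, so that $\psi(\ol{c})$ is a sentence and condition (1) becomes a statement about elementary extensions of $(\tau \cup \{\ol{c}\})$-structures.

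For the routine direction (2) $\Rightarrow$ (1), I would first check by induction that any formula which is $\exists_1$ over finitary formulas is preserved upward under elementary extensions: finitary formulas are preserved in both directions, $\bigdoublevee$ and $\bigdoublewedge$ preserve upward preservation, and a witness for $\exists \ol{y}\,\chi(\ol{a},\ol{y})$ in $\mc{A} \preceq \mc{B}$ already lies in $\mc{A}$. Dually, any formula which is $\forall_1$ over finitary formulas is preserved downward. Given $\psi \equiv \beta \equiv \alpha$ as in (2), upward preservation of $\beta$ yields $\mc{A} \models \psi \Rightarrow \mc{B} \models \psi$ and downward preservation of $\alpha$ yields the converse, which is exactly (1).

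For (1) $\Rightarrow$ (2), I would Morleyize: adjoin a fresh relation symbol $R_\chi$ for each finitary formula $\chi(\ol{z})$ and let $\sigma$ be the $\mc{L}_{\omega_1,\omega}$ sentence $\bigdoublewedge_\chi \forall \ol{z}\,(R_\chi(\ol{z}) \leftrightarrow \chi(\ol{z}))$ in the expanded signature $\tau'$ (countability of $\tau$ is what keeps $\sigma$ in $\mc{L}_{\omega_1,\omega}$). The key observation is that if $\mc{A} \subseteq \mc{B}$ are models of $\sigma$, then they agree on every $R_\chi$ over $\mc{A}$, hence on every finitary formula, so their $\tau$-reducts satisfy $\mc{A} \preceq \mc{B}$; and conversely every elementary extension expands (setting $R_\chi := \chi$) to such a substructure pair of models of $\sigma$. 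Through this dictionary, condition (1) says precisely that both $\psi(\ol{c})$ and $\neg\psi(\ol{c})$ are preserved upward under substructures of models of $\sigma$. I would then apply Theorem \ref{theorem:Malitz-preservation} to each, obtaining existential $\mc{L}_{\omega_1,\omega}(\tau')$ sentences $\theta$ and $\theta'$ with $\sigma \models \psi(\ol{c}) \leftrightarrow \theta$ and $\sigma \models \psi(\ol{c}) \leftrightarrow \neg\theta'$. Finally I would translate back: putting $\theta$ into negation normal form (so that it is built from literals using only $\bigdoublevee$, $\bigdoublewedge$, and $\exists$) and replacing each atom $R_\chi(\ol{t})$ by the finitary formula $\chi(\ol{t})$ gives a formula $\beta$ that is $\exists_1$ over finitary formulas; since any $\tau$-structure expands to a model of $\sigma$ via $R_\chi := \chi$, in which $\theta$ and its translation have the same truth value, we get $\psi \equiv \beta$ in all $\tau$-structures (rewriting $\ol{c}$ as $\ol{x}$). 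The universal sentence $\neg\theta'$, handled identically, has a negation normal form using only $\forall$ and produces the desired $\alpha$, which is $\forall_1$ over finitary formulas.

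The main obstacle is reconciling the two preservation notions --- Malitz's theorem is about substructures, whereas (1) concerns elementary extensions --- and $\sigma$ is exactly the bridge, at the cost of requiring $\tau$ countable so that $\sigma$ stays in $\mc{L}_{\omega_1,\omega}$. The one delicate point in the final translation is that an existential infinitary sentence, once in negation normal form, has all of its infinitary quantifiers existential, so substituting finitary formulas for the new atoms lands us in the class $\exists_1$ over finitary formulas rather than something larger.
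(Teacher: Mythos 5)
Your proof is correct and follows essentially the same route as the paper, which only sketches this argument: expand the signature with a relation symbol $R_\chi$ for each finitary formula (Morleyization), observe that substructure pairs of models of $\sigma$ correspond to elementary pairs of reducts, and apply Theorem \ref{theorem:Malitz-preservation} to both $\psi$ and $\neg\psi$. Your write-up supplies the details the paper omits (the reduction to countable signatures, the negation-normal-form translation back, and why the result lands in the classes $\exists_1$/$\forall_1$ over finitary formulas), all correctly.
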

	
	\begin{proof}[Proof Sketch]
		We omit the full proof as it is straightforward and this theorem will be subsumed by Theorem \ref{theorem:preservationctble} to follow. Essentially one expands the signature by introducing a new relation symbol for each finitary formula and applies Theorem \ref{theorem:Malitz-preservation}.
	\end{proof}
	
	Theorem \ref{theorem:Malitz-preservation-consequence} has the advantage that $\alpha$ and $\beta$ are formulas of $\mc{L}_{\omega_1,\omega}$, while Theorem \ref{theorem:preservation} has the advantage of giving us a single formula, all of whose quantifiers occur in finitary subformulas with complexity bounded by that of $\psi$. We can for the most part combine these advantages, in the following theorem. We say that a formula is quantifier-free over finitary formulas if it can be obtained from finitary formulas by taking conjunctions, disjunctions, and negation.
	
	\preservationtheoremctble*
	
	Recall that the formula that $\psi(\ol{x})$ is equivalent to might involve both $\exists_n$ and $\forall_n$ formulas. We do not know if this is neccesary.
	
	\begin{proof}
		It is a straightforward induction that if $\phi$ is quantifier-free over finitary formulas (2), then $\phi$ transfers across elementary extensions (1).
		
		Suppose $\psi$ transfers across elementary extensions of models of $T$ (1). Restricting the signature to symbols occurring in $\psi$ and $T$, and adding constants for the free variables of $\psi$, it suffices to consider the case where the signature $\tau$ is countable and $\psi$ is a sentence. Suppose that $\psi$ is $\exists_n$. We say that $\mc{A}\equiv_n\mc{B}$ if $\mc{A}$ and $\mc{B}$ satisfy the same \textit{finitary} $\exists_n$ sentences (and hence the same $\forall_n$ sentences).
		
		First we show that for $\mc{A}\models T$, whether $\mc{A} \models \psi$ depends only on the finitary $n$-theory of $\mc{A}$; that is, if $\mc{A},\mc{B}$ are models of $T$ and $\mc{A} \equiv_n \mc{B}$, then $\mc{A} \models \psi$ if and only if $\mc{B} \models \psi$. By Theorem \ref{theorem:preservation}, $\psi$ is equivalent in all models of $T$ to \[\mathrm{Force}_{\psi} = \bigdoublevee\limits_\alpha\bigdoublewedge\limits_\beta \theta_{\alpha,\beta}\] where each $\theta_{\alpha,\beta}$ is a finitary $\exists_n$ sentence.  If $\mc{A}\equiv_n\mc{B}$, then $\mc{A}\models \mathrm{Force}_\psi$ if and only if $\mc{B}\models\mathrm{Force}_\psi$, and so $\mc{A}\models\psi$ if and only if $\mc{B}\models\psi$.
		
		Let $D$ be the set of (finitary) $\exists_n$ sentences of $\mc{L}_{\omega,\omega}$. Given a set $S \subseteq D$, we identify $S$ with the infinitary formula 
		\[  \xi_S = \bigdoublewedge_{\varphi \in S} \varphi \wedge \bigdoublewedge_{\varphi \notin S} \neg \varphi.\]
		Consider the set 
		\[X_\psi = \left\{S\subseteq D \;|\; \text{$T+\xi_S$ is satisfiable and  $T+\xi_S\models \psi$}\right\}\subset 2^D\]
		$D$ is countably infinite, so we can identify $2^D$ with Cantor space, with subbasic clopen sets $[\theta] = \{S\subseteq D\;|\;\theta\in S\}$ and $[\neg \theta] = \{S\subseteq D\;|\;\theta\notin S\}$, for $\theta\in D$. We will show that $X_\psi$ is a Borel set by showing that it is $\bfSigma_1^1$ and $\bfPi_1^1$. If there is a countable model $\mc{A}\models T+\xi_S\wedge \psi$, then $T+\xi_S$ is satisfiable. For any $\mc{B}\models T+\xi_S$, $\mc{B}\equiv_n\mc{A}$, so $\mc{B}\models \psi$, by the above considerations. Thus, $T+\xi_S\models \psi$, so $S\in X_\psi$. Conversely, if $S\in X_\psi$, the L\"owenheim-Skolem theorem for $\mc{L}_{\omega_1,\omega}$ implies that $T+\xi_S\wedge \psi$ has a countable model. Thus
		\[X_\psi = \left\{ S\subseteq D\;|\; \text{$T+\xi_S\wedge \psi$ has a countable model}\right\} \]
		which is $\bfSigma^1_1$. On the other hand, the L\"owenheim-Skolem theorem implies that if $T+\xi_S\nmodels \psi$, there is a countable model $\mc{A}\models T+\xi_S\wedge\neg\psi$. Consequently,
		\[X_\psi = \left\{S\subseteq D\;|\;\text{$T+\xi_S \wedge\neg\psi$ does not have a countable model }\right\} \cap \left\{S\subseteq D\;|\;\text{$T+\xi_S$ is satisfiable}\right\}\] 
		which is the intersection of a $\bfPi^1_1$ set with a Borel set, so is $\bfPi_1^1$.
		
		We can assign to each Borel set $Y\subseteq 2^{D}$ an $\mc{L}_{\omega_1,\omega}$ sentence $\phi_Y$
		which is quantifier-free over finitary $\exists_n$/$\forall_n$ formulas, and equivalent to the $\mc{L}_{(2^\omega)^+,\omega}$ sentence $\bigdoublevee\limits_{S\in Y} \xi_S$. 
		\begin{itemize}
			\item If $Y = [\theta]$ we can take $\phi_Y = \theta$, and if $Y = [\neg \theta]$ we can take $\phi_Y = \neg \theta$.
			\item If $Y = Z^C$, we can take $\phi_Y = \neg\phi_Z$. (This works because $\xi_{S}$ and $\xi_{S'}$ are always inconsistent for $S \neq S'$.)
			\item If $Y = \bigcup\limits_n Z_n$, we can take $\phi_Y = \bigdoublevee\limits_n \phi_{Z_n}$. 
		\end{itemize}
		Any Borel set can be built starting from subbasic clopen sets using countable unions and complements, so we can construct a $\phi_{Y}$ for any $Y$.
		
		We will now show that $\psi$ is equivalent to $\phi_{X_\psi}$ in all models of $T$. If $\mc{A}\models T + \phi_{X_\psi}$, then for some $S\in X_\psi$, $\mc{A}\models \xi_S$. Because $T+\xi_S \models \psi$, $\mc{A}\models \psi$. Conversely, suppose $\mc{A}\models T+\psi$. Let $S = \{\theta \in D|\mc{A}\models\theta\}$ so that  $\mc{A}\models \xi_S$. If $\mc{B}\models T+\xi_S$, then $\mc{A}\equiv_n\mc{B}$, so $\mc{B}\models\psi$. Therefore $T+\xi_S \models \psi$ and so $S \in X_\psi$. Because $\mc{A}\models \xi_S$, $\mc{A}\models \phi_{X_\psi}$.
		
		Thus $\psi$ is equivalent in all models of $T$ to the formula $\phi_{X_\psi}$ which is quantifier-free over finitary $\exists_n$/$\forall_n$ formulas. If we do not assume $\psi$ to be an infinitary $\exists_n$ or $\forall_n$ formula, we have that if $\mc{A}\equiv\mc{B}$, $\mc{A}\models\psi$ if and only if $\mc{B}\models\psi$. Replacing $D$ with all of $\mc{L}_{\omega,\omega}$, the rest of the argument goes through as before. In this case, $\phi_{X_\psi}$ is just quantifier-free over finitary formulas.
	\end{proof}
	
	\bibliography{References}
	\bibliographystyle{alpha}
	
\end{document}